\newtheorem{thm}{Theorem}
\newtheorem{lem}[thm]{Lemma}
\newtheorem{prop}[thm]{Proposition}
\newtheorem{cor}[thm]{Corollary}
\newtheorem*{sublem}{Sublemma}
\theoremstyle{definition}
\newtheorem{rem}[thm]{Remark}
\def\Hom{\mathop{\mathrm{Hom}}\nolimits}
\def\End{\mathop{\mathrm{End}}\nolimits}
\def\Aut{\mathop{\mathrm{Aut}}\nolimits}
\def\Out{\mathop{\mathrm{Out}}\nolimits}
\def\Inn{\mathop{\mathrm{Inn}}\nolimits}
\def\Ad{\mathop{\mathrm{Ad}}\nolimits}
\def\ad{\mathop{\mathrm{ad}}\nolimits}
\def\id{\mathop{\mathrm{id}}\nolimits}
\def\GL{\mathop{\mathrm{GL}}\nolimits}
\def\SL{\mathop{\mathrm{SL}}\nolimits}
\def\PSL{\mathop{\mathrm{PSL}}\nolimits}
\def\SO{\mathop{\mathrm{SO}}\nolimits}
\def\SU{\mathop{\mathrm{SU}}\nolimits}
\def\Sp{\mathop{\mathrm{Sp}}\nolimits}
\def\Gr{\mathop{\mathrm{Gr}}\nolimits}
\def\Isom{\mathop{\mathrm{Isom}}\nolimits}
\def\max{\mathop{\mathrm{max}}\limits}
\newcommand{\mf}[1]{\mathfrak{#1}}
\newcommand{\bb}[1]{\mathbb{#1}}
\newcommand{\mca}[1]{\mathcal{#1}}
\newcommand{\ol}[1]{\overline{#1}}
\newcommand{\df}{d_{\mathcal{F}}}
\title{Vanishing of cohomology and parameter rigidity of actions of solvable Lie groups, II}
\author{Hirokazu Maruhashi\thanks{maruhashihirokazu@gmail.com}}
\date{\empty}
\begin{document}
\maketitle

\begin{abstract}
Let $M\stackrel{\rho_0}{\curvearrowleft}S$ be a $C^\infty$ locally free action of a connected simply connected solvable Lie group $S$ on a closed manifold $M$. Roughly speaking, $\rho_0$ is parameter rigid if any $C^\infty$ locally free action of $S$ on $M$ having the same orbits as $\rho_0$ is $C^\infty$ conjugate to $\rho_0$. In this paper we prove two types of result on parameter rigidity. 

First let $G$ be a connected semisimple Lie group with finite center of real rank at least $2$ without compact factors nor simple factors locally isomorphic to $\SO_0(n,1)$ $(n\geq2)$ or $\SU(n,1)$ $(n\geq2)$, and let $\Gamma$ be an irreducible cocompact lattice in $G$. Let $G=KAN$ be an Iwasawa decomposition. We prove that the action $\Gamma\backslash G\curvearrowleft AN$ by right multiplication is parameter rigid. One of the three main ingredients of the proof is the rigidity theorems of Pansu and Kleiner--Leeb on the quasiisometries of Riemannian symmetric spaces of noncompact type. 

Secondly we show, if $M\stackrel{\rho_0}{\curvearrowleft}S$ is parameter rigid, then the zeroth and first cohomology of the orbit foliation of $\rho_0$ with certain coefficients must vanish. This is a partial converse to the results in the author's \cite{Ma2}, where we saw sufficient conditions for parameter rigidity in terms of vanishing of the first cohomology with various coefficients. 
\end{abstract}

\tableofcontents

\section*{}
This paper consists of two parts. The first part, Section \ref{introd} to Section \ref{an2}, deals with parameter rigidity of certain actions. Section \ref{introd} serves as the introduction for the first part. The second part is on necessary conditions for parameter rigidity in terms of vanishing of cohomology, and it is from Section \ref{necessary conditions} to Section \ref{hone}, where Section \ref{necessary conditions} serves as the introduction for the second part.

\section{Parameter rigidity of the action of $AN$ on $\Gamma\backslash G$}\label{introd}
Let $M\stackrel{\rho_0}{\curvearrowleft}S$ be a $C^\infty$ locally free (ie the isotropy subgroup of every point is discrete) action of a connected simply connected solvable Lie group $S$ on a closed $C^\infty$ manifold $M$. Let $\mca{F}$ be the set of all orbits of $\rho_0$, which is called {\em the orbit foliation of $\rho_0$} and actually is a $C^\infty$ foliation of $M$. We say $\rho_0$ is {\em parameter rigid} if every $C^\infty$ locally free action $M\stackrel{\rho}{\curvearrowleft}S$ with the same orbit foliation as that of $\rho_0$ is {\em parameter equivalent to $\rho_0$}. (We do not assume that $\rho$ is close to $\rho_0$ in some topology.) Here parameter equivalence between $\rho$ and $\rho_0$ means the following. There exist a diffeomorphism $F$ of $M$ and an automorphism $\Phi$ of $S$ such that: 
\begin{itemize}
\item $F\left(\rho_0(x,s)\right)=\rho\left(F(x),\Phi(s)\right)$ for all $x\in M$ and $s\in S$ 
\item the map $F$ preserves each leaf of $\mca{F}$, that is, $F(L)\subset L$ for all $L\in\mca{F}$ 
\item the map $F$ is $C^0$ homotopic to the identity map of $M$ through $C^\infty$ maps which preserve each leaf of $\mca{F}$. 
\end{itemize}

For example a linear flow on a torus is parameter rigid if and only if the velocity vector satisfies the Diophantus condition. 

In \cite{KS} and \cite{KS2} Katok and Spatzier proved the following. 

\begin{thm}[Katok--Spatzier]\label{KS}
Let $G$ be a connected semisimple Lie group with finite center of real rank at least $2$ without compact factors nor simple factors locally isomorphic to $\SO_0(n,1)$ $(n\geq2)$ or $\SU(n,1)$ $(n\geq2)$, and let $\Gamma$ be an irreducible cocompact lattice in $G$. Let $G=KAN$ be an Iwasawa decomposition. Then the action $\Gamma\backslash G\curvearrowleft A$ by right multiplication is parameter rigid. 
\end{thm}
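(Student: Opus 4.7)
My plan is to reformulate parameter rigidity of $\rho_0$ as cocycle rigidity for the $A$-action, and then attack the resulting cocycle equation by combining Anosov dynamics of regular Weyl chamber elements with the higher-rank assumption on $G$. Concretely, any smooth locally free action $\rho$ of $A$ on $\Gamma\backslash G$ with the same orbits as $\rho_0$ can be written as $\rho(x,a)=\rho_0(x,\alpha(x,a))$, where $\alpha$ is smooth and, because $A$ is abelian, satisfies the additive cocycle identity $\alpha(x,a+b)=\alpha(x,a)+\alpha(\rho_0(x,a),b)$. Parameter equivalence then becomes the statement that $\alpha(x,a)=\Phi(a)+u(\rho_0(x,a))-u(x)$ for some automorphism $\Phi$ of $A$ and some smooth $\mathfrak{a}$-valued function $u$ on $\Gamma\backslash G$; in other words, every smooth $\mathfrak{a}$-valued cocycle for the $A$-action must be cohomologous to a constant cocycle.

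To solve this cocycle equation, I would pick any regular element $a_0\in A$ and exploit the fact that its right multiplication on $\Gamma\backslash G$ is a partially hyperbolic diffeomorphism whose stable and unstable distributions are smooth, tangent to horospherical subgroups, and whose centre is tangent to the $A$-orbits. The higher-rank hypothesis together with the exclusion of $\SO_0(n,1)$ and $\SU(n,1)$ factors guarantees that each horospherical subgroup decomposes into several commuting root subgroups, giving enough room for Katok--Spatzier style path-swapping arguments along intersections of stable and unstable leaves of different regular elements. A Livsic-type theorem applied to the flow of $a_0$ then produces a H\"older transfer function $u$ solving the cohomology equation modulo a constant cocycle; a single Anosov flow by itself does not suffice, but the simultaneous cocycle identity for every other regular element of $A$ forces $u$ to be invariant, in succession, under each strongly stable and unstable horospherical subgroup. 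Iterating across regular directions then reduces $\alpha$ to a homomorphism $A\to A$, which supplies the required automorphism $\Phi$.

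The hardest step, I expect, will be promoting the H\"older transfer function $u$ to a smooth one, and, more fundamentally, ruling out nontrivial cohomology on the full space of smooth $\mathfrak{a}$-valued cocycles. This appears to require harmonic analysis on $\Gamma\backslash G$: after decomposing $L^2(\Gamma\backslash G)$ into irreducible unitary $G$-representations, the cocycle equation together with $A$-invariance constrains each irreducible component individually, and Howe--Moore mixing combined with the exponential decay of matrix coefficients that the exclusion of rank-one factors guarantees should both ensure convergence of the Livsic integrals and enable bootstrapping the regularity of $u$ from H\"older to $C^\infty$ along the stable and unstable foliations. The central obstacle thus lies in combining dynamical cocycle rigidity with representation-theoretic decay uniformly across the unitary dual of $G$.
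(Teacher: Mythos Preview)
The paper does not actually prove this theorem. Theorem~\ref{KS} is quoted as a result of Katok--Spatzier from \cite{KS} and \cite{KS2}, and the only comment the paper makes about its proof is the single sentence ``This is proved using representation theory of semisimple Lie groups.'' There is therefore no argument in the paper to compare your proposal against.

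That said, your outline is broadly consistent with the actual Katok--Spatzier strategy and with the hint the paper gives. The reduction of parameter rigidity to triviality of smooth $\mf{a}$-valued cocycles over the $A$-action is exactly the right first move (and is the viewpoint the present paper adopts throughout, via canonical $1$-forms and leafwise cohomology; compare Theorem~\ref{vani}(1), which is the cohomological restatement $H^1(\mca{F}_A)=H^1(\mf{a})$). Your identification of the core difficulty---upgrading a H\"older transfer function to a $C^\infty$ one and controlling all irreducible constituents of $L^2(\Gamma\backslash G)$ simultaneously---is accurate, and this is precisely where representation theory enters in \cite{KS}, \cite{KS2}: the exclusion of $\SO_0(n,1)$ and $\SU(n,1)$ factors is what guarantees uniform exponential decay of matrix coefficients across the unitary dual, which in turn drives both the convergence and the smoothness of the solution. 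One small remark: after reducing $\alpha$ to a homomorphism $\Phi\colon A\to A$ you still need $\Phi$ to be an \emph{automorphism}; this follows from local freeness of the perturbed action $\rho$, so it is not a gap but should be mentioned.
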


This is proved using representation theory of semisimple Lie groups and has lead to a number of subsequent research. In this paper we prove the following, based on the above theorem and applying large scale geometry. 

\begin{thm}\label{at least 2}
Under the same assumptions as Theorem \ref{KS}, the action $\Gamma\backslash G\curvearrowleft AN$ by right multiplication is parameter rigid. 
\end{thm}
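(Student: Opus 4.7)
The plan is to reduce Theorem~\ref{at least 2} to Theorem~\ref{KS} by using the quasi-isometric rigidity of the symmetric space $X=G/K$, due to Pansu and Kleiner--Leeb, to control how an alternative parametrization deforms the $A$-sub-orbits. Let $\rho$ be a second $C^\infty$ locally free action of $AN$ on $\Gamma\backslash G$ with the same orbit foliation $\mca{F}$ as $\rho_0$, and write $\rho(x,s)=\rho_0(x,P(x,s))$ for the smooth reparametrization cocycle $P\colon M\times AN\to AN$.

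Equip $AN$ with a left-invariant Riemannian metric which, via the simply transitive left action of $AN$ on $X$ coming from the Iwasawa decomposition, identifies $AN$ isometrically with $X$. Using $\rho_0$, each leaf $L$ of $\mca{F}$ is modeled on $AN$ and inherits a leafwise metric for which its universal cover is isometric to $X$. The cocycle $P$ induces on each leaf a smooth self-diffeomorphism $\phi_L\colon L\to L$ intertwining the two parametrizations. The first step is to verify, using compactness of $M$ and the smooth cocycle structure of $P$ (in particular the invertibility of its differential along the $AN$-directions), that each $\phi_L$ lifts to a quasi-isometry of $X$ with quasi-isometry constants uniform over $L\in\mca{F}$.

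Our hypothesis on $G$---real rank at least $2$ and no factors locally isomorphic to $\SO_0(n,1)$ or $\SU(n,1)$---is precisely that under which every self-quasi-isometry of $X$ lies within bounded distance from an isometry (Pansu, Kleiner--Leeb). Consequently each $\phi_L$ sends maximal flats to quasi-flats lying close to genuine maximal flats. Since the $\rho_0|_A$-orbits inside $L$ form a distinguished $AN$-invariant family of maximal flats in $X$, the asymptotic behavior of $\phi_L$ gives coarse information about how $\rho$ transports $A$-orbits. Using this coarse information together with the smoothness of $P$, I would construct an automorphism $\Phi_1$ of $AN$ and a leaf-preserving diffeomorphism $F_1$ of $M$ so that the parameter-equivalent action $\rho'(x,s):=F_1^{-1}\rho(F_1(x),\Phi_1(s))$ has $\rho'|_A$ sharing its orbit foliation with $\rho_0|_A$ exactly. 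Theorem~\ref{KS} then produces a parameter equivalence of $\rho'|_A$ with $\rho_0|_A$, and a final step extends the associated automorphism of $A$ to $AN$ (using that $\mf{n}$ decomposes into root spaces under $\Ad(A)$) and verifies that the accompanying diffeomorphism intertwines the full $AN$-actions, appealing again to leaf-preservation and to the geometric rigidity already established.

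The principal obstacle is the straightening step: the conclusion of Pansu/Kleiner--Leeb is only coarse and non-smooth, while what is needed is a smooth global modification of $\rho$ aligning the $A$-sub-orbit foliations \emph{exactly}. Carrying this out requires a careful smooth-to-coarse-to-smooth passage that exploits the uniform quasi-isometry bounds, the smoothness of the cocycle, and the global structure of $\mca{F}$ on the compact manifold $M$; I expect the bulk of the technical labor of this part of the paper to be devoted to this.
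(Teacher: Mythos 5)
Your proposal shares the right starting point with the paper---namely using Proposition~\ref{tt} to see that the cocycle $a_\rho(x,\cdot)$ is a quasiisometry of $AN\simeq G/K$, and then applying Pansu and Kleiner--Leeb's rigidity---but after that it diverges into a route that contains a genuine, unresolved gap. You propose to use the coarse information from quasiisometric rigidity to build an honest smooth parameter equivalence $\rho\to\rho'$ under which the $A$-suborbit foliation of $\rho'$ coincides \emph{exactly} with that of $\rho_0$, and then apply Theorem~\ref{KS} (Katok--Spatzier's parameter rigidity) as a black box. You correctly flag the straightening step as the principal obstacle, but you offer no mechanism for it, and it is far from clear that one exists: the conclusion of Pansu/Kleiner--Leeb gives only a bounded-distance comparison with an isometry, and there is no obvious reason this is enough to exactly realign an $\mf{a}$-subfoliation inside each leaf by a \emph{smooth, leaf-preserving, global} change of parameter. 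A second, also unaddressed, gap is the "final step": even granted that Theorem~\ref{KS} hands you a parameter equivalence for the $A$-actions, the resulting automorphism of $A$ does not automatically extend to $AN$ (only those in the Weyl group do), and verifying that the accompanying leafwise diffeomorphism intertwines the full $AN$-actions is itself a substantial claim with no argument supplied.

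The paper avoids both difficulties by never attempting to straighten anything and never using Theorem~\ref{KS} as a black box. Instead it works entirely at the cohomological level: it applies the sufficient criterion Theorem~\ref{SC2} with $\mf{h}=\mf{n}$, which reduces parameter rigidity to proving $H^1\big(\mca{F};\,\mf{an}\overset{\lambda\circ\varphi_\rho}{\curvearrowright}\bb{R}\big)=H^1\big(\mf{an};\,\mf{an}\overset{\lambda\circ\varphi_\rho}{\curvearrowright}\bb{R}\big)$ for all $\lambda\in\Sigma_+$ and all competing $\rho$. The available vanishing results (Katok--Spatzier's $H^1(\mca{F}_A)=H^1(\mf{a})$, Kanai's twisted vanishing, which are the \emph{ingredients} of the proof of Theorem~\ref{KS} rather than its statement) only cover coefficients $\lambda\in\Sigma\cup\{0\}$, not the a priori arbitrary twists $\lambda\circ\varphi_\rho$. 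The role of Pansu/Kleiner--Leeb (together with Farb--Mosher and Reiter Ahlin in rank one factors) is narrow and precise: it is used to show that $\varphi_\rho|_\mf{a}$ permutes $\Sigma_+$, so that the "infinitely many coefficients" collapse back to the finite known list. No smooth straightening is needed because the coarse information is consumed inside an algebraic statement about $\varphi_\rho$, not inside a geometric realignment of foliations. You should keep the ingredients you identified, but replace the reduction-to-Theorem~\ref{KS}-by-straightening plan with the cohomological reduction of Lemma~\ref{yy} and Proposition~\ref{qq}.
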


We give a proof of this theorem in Section \ref{an1} and Section \ref{an2} after recalling the results in Maruhashi \cite{Ma2} in Section \ref{prelimi}. The proof is a combination of the following three steps: 
\begin{enumerate}
\item vanishing of cohomology $\Rightarrow$ parameter rigidity. \\
This is the sufficient condition for parameter rigidity proved in \cite{Ma2}. In the current article this is Theorem \ref{SC2}\label{1}
\item cohomology vanishing results. \\
These are by Katok--Spatzier \cite{KS}, \cite{KS2} and Kanai \cite{Ka}. See Theorem \ref{vani} and Corollary \ref{im} in this paper\label{2}
\item bridging the gap between Step \ref{1} and Step \ref{2}. \\
This is because the cohomology vanishing results are available only for finitely many coefficients, while the sufficient condition for parameter rigidity requires vanishing of cohomology for seemingly much more coefficients. Here we use Proposition \ref{tt}, which shows the relevance to large scale geometry. Then the main point is that our acting group $AN$ is isometric to $G/K$ by Iwasawa decomposition $G=ANK$. So we can use the rigidity theorems of Pansu \cite{Pa} and Kleiner--Leeb \cite{KL} on quasiisometries of symmetric spaces, and a certain rigidity property of quasiisometries of hyperbolic spaces proved in Farb--Mosher \cite{FM} and Reiter Ahlin \cite{RA}. 
\end{enumerate}

Theorem \ref{at least 2} shows a contrast between the higher rank case and $\widetilde{\PSL}(2,\bb{R})$, the universal cover of $\PSL(2,\bb{R})$, for which Asaoka \cite{A2} gives (generally) nontrivial orbit-preserving deformations of the actions of $AN$ by right multiplication.

\begin{thm}[Asaoka \cite{A2}]
Let $\Gamma$ be a cocompact lattice in $\widetilde{\PSL}(2,\bb{R})$ and let 
\begin{equation*}
A=\left\{
\begin{pmatrix}
a&\\
&a^{-1}
\end{pmatrix}
\middle|a>0\right\}, \quad
N=\left\{
\begin{pmatrix}
1&b\\
&1
\end{pmatrix}
\middle|b\in\bb{R}\right\}. 
\end{equation*}
Let $\Phi_\Gamma$ be the flow on $\Gamma\backslash\widetilde{\PSL}(2,\bb{R})$ defined by the action of $A$ by right multiplication, $\mca{P}$ be the set of oriented periodic orbits of $\Phi_\Gamma$ and $\tau(\gamma)$ be the period of $\gamma$ for $\gamma\in\mca{P}$. Consider 
\begin{equation*}
\Delta_\Gamma=\left\{a\in H^1\left(\Gamma\backslash\widetilde{\PSL}(2,\bb{R});\bb{R}\right)\ \middle| \ \sup_{\gamma\in\mca{P}}\frac{\vert a(\gamma)\rvert}{\tau(\gamma)}<1\right\}
\end{equation*}
which is an open neighborhood of $0$ in $H^1\left(\Gamma\backslash\widetilde{\PSL}(2,\bb{R});\bb{R}\right)$. Then there exists an analytic locally free action $\rho_a$ of $AN$ on $\Gamma\backslash\widetilde{\PSL}(2,\bb{R})$ for each $a\in\Delta_\Gamma$ with the following properties: 
\begin{itemize}
\item The action $\rho_0$ is defined by the right multiplication. 
\item All the $\rho_a$'s have the same orbit foliation $\mca{F}$. 
\item Actions $\rho_a$ and $\rho_{a^\prime}$ are not parameter equivalent if $a\neq a^\prime$. 
\item Any $C^\infty$ locally free action of $AN$ whose orbit foliation is $\mca{F}$ is parameter equivalent to $\rho_a$ for some $a\in\Delta_\Gamma$. 
\item The action $\rho_a$ does not preserve any $C^0$ volume form on $\Gamma\backslash\widetilde{\PSL}(2,\bb{R})$ except when $a=0$. 
\end{itemize}
\end{thm}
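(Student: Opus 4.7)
The plan is to parameterize orbit-preserving $AN$-actions on $M = \Gamma\backslash\widetilde{\PSL}(2,\bb{R})$ by cohomology classes in $H^1(M;\bb{R})$ through a normal form argument, to realize each $\rho_a$ as an explicit twist of $\rho_0$ by a closed $1$-form representing $a$, and finally to identify $\Delta_\Gamma$ as the sharp admissibility region.

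\textbf{Step 1 (infinitesimal parameterization).} An $AN$-action with orbit foliation $\mca{F}$ is given by two vector fields $Y_A,Y_N$ tangent to the $2$-dimensional leaves of $\mca{F}$, spanning $T\mca{F}$ pointwise, and satisfying $[Y_A,Y_N]=Y_N$. Writing these in the basis $X_A,X_N$ of the generators of $\rho_0$, and using leaf-preserving gauge transformations together with automorphisms of $AN$, I would reduce to a normal form in which the deformation is encoded by a closed $1$-form $\omega$ on $M$. The bracket relation imposes one scalar equation along the geodesic flow, whose cohomological content lands in $H^1(M;\bb{R})$ via the Ruelle--Sullivan pairing.

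\textbf{Step 2 (construction of $\rho_a$).} For $a\in H^1(M;\bb{R})$, pick a closed representative $\omega$ and define $Y_A^a,Y_N^a$ by a time-change of $\rho_0$ driven by the Anosov cocycle $c(t,x)=\int_0^t\omega(X_A)(\Phi_\Gamma^s x)\,ds$; concretely, $Y_A^a$ is obtained from $X_A$ by an additive modification in the $X_N$-direction, while $Y_N^a$ is rescaled using a solution of an $X_A$-cohomological equation. By Liv\v{s}ic's theorem for the geodesic flow, the condition $\sup_\gamma\lvert a(\gamma)\rvert/\tau(\gamma)<1$ is equivalent to the existence of such a solution with the positivity needed for $Y_A^a,Y_N^a$ to be pointwise linearly independent; real-analyticity transfers from $\omega$ using the analytic hypoellipticity of the Anosov flow.

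\textbf{Step 3 (invariants, surjectivity, and volume non-preservation).} Two actions $\rho_a,\rho_{a'}$ with $a\neq a'$ are non-equivalent because the Schwartzman class of the $A$-subaction, read off on periodic orbits, gives the complete invariant $\gamma\mapsto a(\gamma)/\tau(\gamma)$, which is preserved under parameter equivalence. Surjectivity - that every $AN$-action with orbit foliation $\mca{F}$ arises as some $\rho_a$ - follows by running Step 1 backward. For the final bullet, a $\rho_a$-invariant $C^0$ volume form, compared with the Haar volume preserved by $\rho_0$, would yield a non-constant $A$-invariant density; but the geodesic flow $\Phi_\Gamma$ is ergodic, so this density must be constant, forcing $\omega(X_A)$ to be a coboundary and hence $a=0$.

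The main obstacle is the sharp identification of $\Delta_\Gamma$ in Step 2: converting the averaged periodic-orbit inequality into the pointwise positivity required by the bracket relation, and doing so in the real-analytic category. This is where the full strength of Liv\v{s}ic regularity for hyperbolic flows, together with the analytic structure of $\Gamma\backslash\widetilde{\PSL}(2,\bb{R})$, has to be used; any looseness here would either shrink $\Delta_\Gamma$ or force giving up analyticity.
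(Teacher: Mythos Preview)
The paper does not prove this theorem. It is stated as a result of Asaoka with the attribution ``Asaoka \cite{A2}'' and the reader is explicitly referred to \cite{A2} for the details; the theorem appears only to illustrate the contrast between the rank-one case and the higher-rank rigidity proved in the rest of the paper. So there is no proof in the paper to compare your proposal against.

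That said, your outline is a reasonable caricature of the strategy in \cite{A2}, but it is far from a proof. Several points are left as slogans where the real work lies: in Step~1 you assert that the gauge-and-automorphism reduction yields a normal form parameterized by a closed $1$-form, but the actual reduction of a pair $(Y_A,Y_N)$ with $[Y_A,Y_N]=Y_N$ to such a form requires solving nontrivial cohomological equations and is the substance of Asaoka's argument. In Step~2 the claim that the Liv\v{s}ic condition $\sup_\gamma |a(\gamma)|/\tau(\gamma)<1$ is \emph{equivalent} to the positivity needed for linear independence is not obvious and needs justification; likewise ``analytic hypoellipticity of the Anosov flow'' is not a standard off-the-shelf statement and the analytic regularity in \cite{A2} is obtained differently. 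In Step~3 your invariant (the Schwartzman class on periodic orbits) is the right object, but you have not shown it is preserved under parameter equivalence, which requires controlling how leaf-preserving homotopies interact with periodic-orbit integrals. If you intend to supply a self-contained proof, each of these gaps must be closed; otherwise a citation to \cite{A2}, as the paper does, is the honest route.
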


We also know how the action $\rho_a$ is controlled by the cohomology class $a$, but we refer the reader to \cite{A2} for that and more information. Note that the above deformation is different from the nonorbit-preserving deformation coming from the deformation of the lattice, whose deformation space has the dimension equal to that of Teichm\"{u}ller space, because such deformations are necessarily $C^0$ volume preserving.

\section{Preliminaries}\label{prelimi}
This section is a summary of the results we need later, proved in Maruhashi \cite{Ma2}. See \cite{Ma2} for the detail. In this paper Lie algebras are denoted by the corresponding lowercase Fraktur of the corresponding Lie groups. The symbol $\Gamma(\ \cdot\ )$ denotes the set of all $C^\infty$ sections of a vector bundle.

\subsection{Leafwise cohomology}
Let $M\stackrel{\rho_0}{\curvearrowleft}S$ be a $C^\infty$ locally free action of a connected simply connected solvable Lie group $S$ on a closed manifold $M$ with the orbit foliation $\mca{F}$. Let $\omega_0\in\Gamma\left(\Hom(T\mca{F},\mf{s})\right)$ denote {\em the canonical $1$-form of $\rho_0$}, ie $(\omega_0)_x\colon T_x\mca{F}\to\mf{s}$ for $x\in M$ is defined as the inverse of the derivative at the identity of the map $S\to M$, $s\mapsto\rho_0(x,s)$. Let 
\begin{equation*}
\df\colon\Gamma\left(\bigwedge^pT^*\mca{F}\right)\to\Gamma\left(\bigwedge^{p+1}T^*\mca{F}\right)
\end{equation*}
be the leafwise exterior derivative of $\mca{F}$, defined by the same formula as the usual exterior derivative. Then $\omega_0$ satisfies the Maurer--Cartan equation $d_\mca{F}\omega_0+[\omega_0,\omega_0]=0$. Here $\df\omega_0$ and $[\omega_0,\omega_0]$ are defined by 
\begin{equation*}
\df\omega_0(X,Y)=X\omega_0(Y)-Y\omega_0(X)-\omega_0\left([X,Y]\right)
\end{equation*}
and 
\begin{equation*}
[\omega_0,\omega_0](X,Y)=\left[\omega_0(X),\omega_0(Y)\right]
\end{equation*}
for $X$, $Y\in\Gamma(T\mca{F})$. Let $\mf{s}\stackrel{\pi}{\curvearrowright}V$ be a representation of $\mf{s}$ on a finite dimensional real vector space $V$. Then $\pi\omega_0\in\Gamma\left(\Hom\left(T\mca{F},\End(V)\right)\right)$ satisfies 
\begin{equation*}
d_\mca{F}\pi\omega_0+\left[\pi\omega_0,\pi\omega_0\right]=0. 
\end{equation*}
We regard $\pi\omega_0$ as the connection form of a flat $\mca{F}$-partial connection $\nabla$ of the trivial vector bundle $M\times V\to M$ relative to any global frame of the bundle which has constant $V$ components, ie $\nabla_Xv=\pi\left(\omega_0(X)\right)v$ for $X\in\Gamma(T\mca{F})$ and $v\in V$, where $v$ is regarded as a section of $M\times V\to M$. Hence $\nabla\xi=\df\xi+\pi\omega_0\xi$ for general $\xi\in\Gamma(V)$. The exterior derivative of $\nabla$ is 
\begin{align*}
\Gamma\left(\bigwedge^pT^*\mca{F}\otimes V\right)&\to\Gamma\left(\bigwedge^{p+1}T^*\mca{F}\otimes V\right)\\
\omega&\mapsto\df\omega+\pi\omega_0\wedge\omega, 
\end{align*}
where our definition of exterior product is 
\begin{equation*}
\left(\pi\omega_0\wedge\omega\right)\left(X_1,\ldots,X_{p+1}\right)=\sum_{i=1}^{p+1}(-1)^{i+1}\pi\omega_0(X_i)\omega\left(X_1,\ldots,\widehat{X_i},\ldots,X_{p+1}\right). 
\end{equation*}
The square of this operator is zero by the flatness. The cohomology $H^*\left(\mca{F};\mf{s}\stackrel{\pi}{\curvearrowright}V\right)$ of this complex is {\em the leafwise cohomology of $\mca{F}$ with coefficient $\pi$}. Recall that the cohomology $H^*\left(\mf{s};\mf{s}\stackrel{\pi}{\curvearrowright}V\right)$ of the Lie algebra $\mf{s}$ with coefficient $\pi$ is obtained from the complex $\Hom\left(\bigwedge^*\mf{s},V\right)$. We have an injective cochain map 
\begin{align*}
\Hom\left(\bigwedge^*\mf{s},V\right)&\hookrightarrow\Gamma\left(\bigwedge^*T^*\mca{F}\otimes V\right)\\
\varphi&\mapsto\omega_0^*\varphi, 
\end{align*}
where $\omega_0^*$ is the pullback by $\omega_0$. Then by Lemma 2.1.3 of \cite{Ma2}, the induced map 
\begin{equation*}
H^*\left(\mf{s};\mf{s}\stackrel{\pi}{\curvearrowright}V\right)\to H^*\left(\mca{F};\mf{s}\stackrel{\pi}{\curvearrowright}V\right)
\end{equation*}
is injective and we see $H^*\left(\mf{s};\mf{s}\stackrel{\pi}{\curvearrowright}V\right)$ as a subspace of $H^*\left(\mca{F};\mf{s}\stackrel{\pi}{\curvearrowright}V\right)$.

\subsection{A sufficient condition for parameter rigidity}
Let $\mf{n}$ denote the nilradical of $\mf{s}$. We have $[\mf{s},\mf{s}]\subset\mf{n}$. Take a subspace $\mf{h}$ such that $[\mf{s},\mf{s}]\subset\mf{h}\subset\mf{n}$. Then $\mf{h}$ is a nilpotent ideal of $\mf{s}$ and let 
\begin{equation*}
\mf{h}\supset\mf{h}^2\supset\cdots\supset\mf{h}^d\supset 0
\end{equation*}
be the lower central series of $\mf{h}$. This filtration of $\mf{h}$ is invariant with respect to $\mf{s}\stackrel{\ad}{\curvearrowright}\mf{h}$. Let 
\begin{equation*}
\mf{s}\stackrel{\ad}{\curvearrowright}\Gr(\mf{h})=\bigoplus_{i=1}^d\mf{h}^i/\mf{h}^{i+1}
\end{equation*}
be the associated graded quotient. Since $\mf{h}$ acts trivially, we get $\mf{s}/\mf{h}\stackrel{\ad}{\curvearrowright}\Gr(\mf{h})$. 

Let $\mca{A}(\mca{F},S)$ be the set of all $C^\infty$ locally free actions $M\curvearrowleft S$ with the orbit foliation $\mca{F}$. Let $\rho\in\mca{A}(\mca{F},S)$, and let $\omega$ denote the canonical $1$-form of $\rho$. Let $p\colon\mf{s}\to\mf{s}/\mf{h}$ denote the natural projection. Applying $p$ to $\df\omega+[\omega,\omega]=0$, we get $\df p\omega=0$. Assume $H^1(\mca{F})=H^1(\mf{s})$. Then $[p\omega]\in H^1\left(\mca{F};\mf{s}/\mf{h}\right)=H^1\left(\mf{s};\mf{s}/\mf{h}\right)$. So there exist a unique linear map $\varphi_\rho\colon\mf{s}\to\mf{s}/\mf{h}$ which vanishes on $[\mf{s},\mf{s}]$ and a $C^\infty$ map $h\colon M\to\mf{s}/\mf{h}$ such that 
\begin{equation*}
p\omega=\varphi_\rho\omega_0+\df h. 
\end{equation*}
The map $\varphi_\rho$ is surjective by Lemma 2.2.2 of \cite{Ma2}. 

\begin{thm}[Maruhashi \cite{Ma2}]\label{SC2}
If 
\begin{equation*}
H^1(\mca{F})=H^1(\mf{s})
\end{equation*}
and 
\begin{equation*}
H^1\left(\mca{F};\mf{s}\stackrel{\ad\circ\varphi_\rho}{\curvearrowright}\Gr(\mf{h})\right)=H^1\left(\mf{s};\mf{s}\stackrel{\ad\circ\varphi_\rho}{\curvearrowright}\Gr(\mf{h})\right)
\end{equation*}
for some $\mf{h}$ and for all $\rho\in\mca{A}(\mca{F},S)$, then $M\stackrel{\rho_0}{\curvearrowleft}S$ is parameter rigid. 
\end{thm}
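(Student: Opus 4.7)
The plan is, given any $\rho\in\mca{A}(\mca{F},S)$ with canonical $1$-form $\omega$, to construct a leaf-preserving diffeomorphism $F$ of $M$ (homotopic to $\id_M$ through leaf-preserving $C^\infty$ maps) together with a Lie algebra automorphism $\Phi_*\in\Aut(\mf{s})$ satisfying $F^*\omega=\Phi_*\omega_0$. Since $S$ is connected and simply connected, $\Phi_*$ exponentiates to $\Phi\in\Aut(S)$, and the identity $F^*\omega=\Phi_*\omega_0$ is the infinitesimal form of $F(\rho_0(x,s))=\rho(F(x),\Phi(s))$, which is exactly parameter equivalence. First I would use the given decomposition $p\omega=\varphi_\rho\omega_0+\df h$: flowing $M$ along $\rho_0$ by an appropriate $\mf{s}$-valued lift of $h$ provides a gauge transformation that kills the coboundary, reducing to the case $p\omega=\varphi_\rho\omega_0$. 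Because $\varphi_\rho$ is surjective and $\dim\ker\varphi_\rho=\dim\mf{h}$, I can then pick a linear bijection $\Phi_*^{(0)}\colon\mf{s}\to\mf{s}$ lifting $\varphi_\rho$, so that $\eta_1:=\omega-\Phi_*^{(0)}\omega_0$ takes values in $\mf{h}=\mf{h}^1$.

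The heart of the proof is an induction along the lower central series $\mf{h}=\mf{h}^1\supset\mf{h}^2\supset\cdots\supset\mf{h}^d\supset 0$. The inductive hypothesis at stage $i$ is that there exist a leaf-preserving diffeomorphism $F_i$ (a composition of time-$g$ maps of $\rho_0$, automatically homotopic to $\id_M$ through leaf-preserving $C^\infty$ maps via $(t,x)\mapsto\rho_0(x,\exp(tg(x)))$) and a linear bijection $\Phi_*^{(i)}\colon\mf{s}\to\mf{s}$ that lifts $\varphi_\rho$ and respects Lie brackets modulo $\mf{h}^i$, such that $\eta_i:=F_i^*\omega-\Phi_*^{(i)}\omega_0$ is $\mf{h}^i$-valued. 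Expanding the Maurer--Cartan equation $\df\omega+[\omega,\omega]=0$ with $\omega=\Phi_*^{(i)}\omega_0+\eta_i$ and reducing modulo $\mf{h}^{i+1}$, the only surviving cross-term comes from $2[\Phi_*^{(i)}\omega_0,\eta_i]$, which reduces to $(\ad\circ\varphi_\rho)\omega_0\wedge\bar\eta_i$ modulo $\mf{h}^{i+1}$. Hence $\bar\eta_i\in\Gamma(T^*\mca{F}\otimes\mf{h}^i/\mf{h}^{i+1})$ is a $1$-cocycle in the complex computing $H^1\left(\mca{F};\mf{s}\stackrel{\ad\circ\varphi_\rho}{\curvearrowright}\mf{h}^i/\mf{h}^{i+1}\right)$. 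The second hypothesis then says its class is represented by a Lie algebra $1$-cocycle $\psi_i\colon\mf{s}\to\mf{h}^i/\mf{h}^{i+1}$, so $\bar\eta_i=\psi_i\omega_0+\df g_i+(\ad\circ\varphi_\rho)\omega_0\,g_i$ for some $g_i\colon M\to\mf{h}^i/\mf{h}^{i+1}$. Absorbing any lift of $\psi_i$ to $\mf{h}^i$ into $\Phi_*^{(i)}$ and pre-composing $F_i$ with the time-$\tilde g_i$ map of $\rho_0$ (where $\tilde g_i$ lifts $g_i$ to $\mf{h}^i$) produces $\Phi_*^{(i+1)}$ and $F_{i+1}$ with $\eta_{i+1}$ taking values in $\mf{h}^{i+1}$.

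After $d$ stages $\eta_{d+1}=0$, giving $F^*\omega=\Phi_*\omega_0$ with $\Phi_*\in\Aut(\mf{s})$ as required. The main obstacle is the careful bookkeeping: one must verify at each step that the update preserves both the lifting property $p\circ\Phi_*^{(i)}=\varphi_\rho$ and the overall bijectivity of $\Phi_*^{(i)}$, and most critically that it gains bracket-compatibility modulo $\mf{h}^{i+1}$ --- the latter is exactly what ensures $\bar\eta_{i+1}$ is again a cocycle at the next stage and hence that the second hypothesis can be re-applied. The leaf-preserving homotopy property of the final composite $F$ comes for free because each constituent time-$g$ map of $\rho_0$ carries a canonical leaf-preserving isotopy to $\id_M$.
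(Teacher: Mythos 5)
Your overall architecture --- reduce to $F^*\omega=\Phi_*\omega_0$, then induct along the lower central series of $\mf{h}$, using the first hypothesis to fix the $\mf{s}/\mf{h}$-part and the second hypothesis to descend through $\mf{h}^i/\mf{h}^{i+1}$ --- is the right one and matches the filtration-style arguments the paper itself employs in Sections \ref{hoo}--\ref{hone} (Sublemma \ref{sublemma}, Lemma \ref{pp}). But there is an off-by-one gap at the heart of the inductive step. Write $\epsilon_i(X,Y)=[\Phi_*^{(i)}X,\Phi_*^{(i)}Y]-\Phi_*^{(i)}[X,Y]$. Expanding the Maurer--Cartan equation for $F_i^*\omega=\Phi_*^{(i)}\omega_0+\eta_i$ modulo $\mf{h}^{i+1}$ one gets
\[
\df\bar\eta_i+(\ad\circ\varphi_\rho)\omega_0\wedge\bar\eta_i=-\,\omega_0^*\bar\mu_i,
\]
where $\bar\mu_i\in\Hom\bigl(\textstyle\bigwedge^2\mf{s},\mf{h}^i/\mf{h}^{i+1}\bigr)$ is the reduction of $\epsilon_i$ modulo $\mf{h}^{i+1}$. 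Your inductive hypothesis only says $\Phi_*^{(i)}$ respects brackets modulo $\mf{h}^i$, so $\epsilon_i\in\mf{h}^i$; this makes $\bar\mu_i$ well-defined but does not make it zero. Hence $\bar\eta_i$ is \emph{not} a cocycle --- its coboundary is the nonzero leafwise-constant $2$-form $-\omega_0^*\bar\mu_i$ --- and you cannot yet apply the hypothesis $H^1(\mca{F};\ad\circ\varphi_\rho\curvearrowright\mf{h}^i/\mf{h}^{i+1})=H^1(\mf{s};\cdot)$. Your closing remark that the update ``gains bracket-compatibility modulo $\mf{h}^{i+1}$,'' which would in turn ensure that $\bar\eta_{i+1}$ is a cocycle, is circular as written: the update that produces $\Phi_*^{(i+1)}$ comes from $\psi_i$, but $\psi_i$ was produced by treating $\bar\eta_i$ as a cocycle, which was exactly what bracket-compatibility mod $\mf{h}^{i+1}$ of $\Phi_*^{(i)}$ (not $\Phi_*^{(i+1)}$) was needed for.

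The gap is closable, but requires an ingredient you have not invoked: the injectivity in degree $2$ of the comparison map $H^2(\mf{s};\ad\circ\varphi_\rho\curvearrowright\mf{h}^i/\mf{h}^{i+1})\hookrightarrow H^2(\mca{F};\cdot)$, which is part of Lemma 2.1.3 of \cite{Ma2} as quoted in Section \ref{prelimi}. From $\delta\bar\eta_i=-\omega_0^*\bar\mu_i$ and $\delta^2=0$ one gets $\omega_0^*(\delta\bar\mu_i)=0$, so $\bar\mu_i$ is a Lie algebra $2$-cocycle by injectivity of $\omega_0^*$; and since $\omega_0^*\bar\mu_i$ is a foliated coboundary, injectivity in degree $2$ gives a constant $\psi_i^0\colon\mf{s}\to\mf{h}^i/\mf{h}^{i+1}$ with $\delta\psi_i^0=-\bar\mu_i$. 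Absorbing a lift of $\psi_i^0$ into $\Phi_*^{(i)}$ makes it bracket-compatible modulo $\mf{h}^{i+1}$ and turns $\bar\eta_i-\omega_0^*\psi_i^0$ into a genuine cocycle; only now can the $H^1$ hypothesis be applied to produce the constant part $\psi_i$ and the coboundary $\delta g_i$, after which the rest of your update (absorb $\psi_i$ into $\Phi_*^{(i)}$, shift $F_i$ by the time-$\tilde g_i$ flow) goes through. So the correct ordering is: first fix the $2$-cocycle obstruction using $H^2$-injectivity, then invoke the $H^1$ hypothesis --- not the other way around.
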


See Theorem 2.2.5 of \cite{Ma2} for this theorem.

\subsection{A property from large scale geometry}
Let $\rho\in\mca{A}(\mca{F},S)$ and let $a_\rho\colon M\times S\to S$ be the unique $C^\infty$ map satisfying 
\begin{equation*}
\rho_0(x,s)=\rho\left(x,a_\rho(x,s)\right)\quad\text{and}\quad a_\rho(x,1)=1
\end{equation*}
for all $x\in M$ and $s\in S$. The map $a_\rho$ is defined since $\rho_0$ and $\rho$ have the same orbit foliation. It is known that $a_\rho$ is a cocycle over $\rho_0$. 

Let $X$, $B$ be metric spaces. A surjective map $p\colon X\to B$ is {\em a distance respecting projection} if 
\begin{equation*}
d(b,b^\prime)=d\left(p^{-1}(b),p^{-1}(b^\prime)\right)=d_\mca{H}\left(p^{-1}(b),p^{-1}(b^\prime)\right)
\end{equation*}
holds for all $b$, $b^\prime\in B$, where 
\begin{equation*}
d\left(p^{-1}(b),p^{-1}(b^\prime)\right)=\inf\left\{d(x,x^\prime)\ \middle|\ x\in p^{-1}(b),x^\prime\in p^{-1}(b^\prime)\right\}
\end{equation*}
and $d_\mca{H}$ denotes the Hausdorff distance. Let $p\colon X\to B$ and $p^\prime\colon X^\prime\to B^\prime$ be distance respecting projections. A diagram 
\begin{equation*}
\begin{tikzcd}
X\ar[r,"f"]\ar[d,"p"']&X^\prime\ar[d,"p^\prime"]\\
B\ar[r,"\varphi"']&B^\prime
\end{tikzcd}
\end{equation*}
is {\em fiber respecting} or {\em $f$ is fiber respecting over $\varphi$} if $f$ and $\varphi$ are maps and there exists a constant $C>0$ such that $d_\mca{H}\left(f\left(p^{-1}(b)\right),(p^\prime)^{-1}\left(\varphi(b)\right)\right)<C$ for all $b\in B$. 

\begin{prop}\label{metric}
Let $G$ be a connected Lie group and $H$ a connected normal closed subgroup of $G$. Take an inner product of $\mf{g}$. Endow $\mf{g}/\mf{h}$ with the inner product for which the restriction $\mf{h}^\perp\stackrel{\sim}{\to}\mf{g}/\mf{h}$ of the projection $\mf{g}\to\mf{g}/\mf{h}$ is an isometry. Give $G$ and $G/H$ left invariant Riemannian metrics corresponding to these inner products. Then the projection $p\colon G\to G/H$ is a distance respecting projection. 
\end{prop}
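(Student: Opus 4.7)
The plan is to show that $p$ is a Riemannian submersion admitting global horizontal lifts of curves, from which both distance equalities will follow by standard length-comparison arguments. First I would set up the orthogonal decomposition $T_g G = \mca{V}_g \oplus \mca{H}_g$ with \emph{vertical} distribution $\mca{V}_g = L_{g*}\mf{h}$ (tangent to the fiber $gH = p^{-1}(p(g))$) and \emph{horizontal} distribution $\mca{H}_g = L_{g*}\mf{h}^\perp$; these are orthogonal by left invariance of the metric on $G$ together with $\mf{h} \perp \mf{h}^\perp$ in $\mf{g}$. Since $p \circ L_g = L_g \circ p$ (where the $L_g$ on the right acts on $G/H$), and the inner product on $\mf{g}/\mf{h}$ is constructed precisely so that $\mf{h}^\perp \stackrel{\sim}{\to} \mf{g}/\mf{h}$ is an isometry, the restriction $p_*|_{\mca{H}_g} \colon \mca{H}_g \to T_{p(g)}(G/H)$ is a linear isometry, while $p_*$ vanishes on $\mca{V}_g$. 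In particular $\|p_* v\| \leq \|v\|$ for every $v \in T_g G$, with equality exactly when $v$ is horizontal.

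Next I would prove global existence of horizontal lifts. Given a piecewise $C^1$ curve $\gamma \colon [0,1] \to G/H$ and $x_0 \in p^{-1}(\gamma(0))$, a horizontal lift $\tilde\gamma$ is the solution of the time-dependent ODE demanding that $\tilde\gamma'(t)$ be the unique horizontal vector at $\tilde\gamma(t)$ with $p_* \tilde\gamma'(t) = \gamma'(t)$. Local solutions exist by standard ODE theory, and $\|\tilde\gamma'(t)\| = \|\gamma'(t)\|$ since $p_*$ is isometric on $\mca{H}_{\tilde\gamma(t)}$, so $\tilde\gamma$ has length bounded by that of $\gamma$ on any interval of definition. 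Completeness of $G$—which holds because left translations act transitively by isometries, making $G$ a homogeneous Riemannian manifold—then prevents the lift from escaping to infinity in finite time, so by Hopf--Rinow $\tilde\gamma$ extends uniquely to $[0,1]$ and ends in $p^{-1}(\gamma(1))$.

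Finally I would combine these to derive both equalities. Any curve from $x \in p^{-1}(b)$ to $x' \in p^{-1}(b')$ projects under $p$ to a curve from $b$ to $b'$ of no greater length, which gives $d(b, b') \leq d(p^{-1}(b), p^{-1}(b'))$. Conversely, for any $x \in p^{-1}(b)$ and any $\epsilon > 0$, horizontally lifting a curve from $b$ to $b'$ of length less than $d(b,b') + \epsilon$ starting at $x$ yields an endpoint $x' \in p^{-1}(b')$ with $d(x, x') < d(b, b') + \epsilon$; this shows $d(x, p^{-1}(b')) \leq d(b, b')$ for every $x \in p^{-1}(b)$. Hence both $d(p^{-1}(b), p^{-1}(b')) \leq d(b, b')$ and $\sup_{x \in p^{-1}(b)} d(x, p^{-1}(b')) \leq d(b, b')$ hold; by symmetry the same bounds hold with $b$ and $b'$ exchanged, and combined with the trivial inequality $d(A, B) \leq d_{\mca{H}}(A, B)$ applied to the two fibers we obtain both $d(b, b') = d(p^{-1}(b), p^{-1}(b'))$ and $d(b, b') = d_{\mca{H}}(p^{-1}(b), p^{-1}(b'))$. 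The main technical point is the global horizontal lift, which rests on completeness of left-invariant Riemannian metrics via homogeneity; the rest is a formality once the Riemannian submersion structure is in place.
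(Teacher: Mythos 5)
Your proof is correct but takes a genuinely different route from the paper's. The paper does not give a direct argument at all: it cites Lemma 4.1.1 of \cite{Ma2}, a more general distance-respecting statement that does not require $H$ to be normal, and the only verification done here is that $H\stackrel{\Ad}{\curvearrowright}\mf{g}/\mf{h}$ is trivial, which holds because $H$ is connected and $\mf{h}$ is an ideal (so $\ad(\mf{h})$ kills $\mf{g}/\mf{h}$). Your argument is instead a self-contained direct proof that $p$ is a Riemannian submersion with global horizontal lifts. The submersion structure is indeed in place: left-invariance of both metrics, the equivariance $p\circ L_g = L_g\circ p$, and the choice of the inner product on $\mf{g}/\mf{h}$ together show that $p_*$ restricted to $\mca{H}_g = L_{g*}\mf{h}^\perp$ is a linear isometry onto $T_{p(g)}(G/H)$ while $p_*$ kills $\mca{V}_g = L_{g*}\mf{h}$; the triviality of the isotropy action $H\stackrel{\Ad}{\curvearrowright}\mf{g}/\mf{h}$ is what makes the left-invariant metric on $G/H$ well-defined, so your argument also quietly uses the normality of $H$. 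Your global-lift argument via completeness of the left-invariant metric on $G$ is valid (the finite-length lift is Cauchy, hence converges, and one reintegrates); one could alternatively avoid Hopf--Rinow by observing that, $p$ being a surjective Lie group homomorphism, the horizontal lift equation expressed through the left Maurer--Cartan form is a left-invariant ODE on $G$ and so has global solutions by the usual restart argument. The final length comparison then yields all three equalities exactly as you state. What your approach buys is independence from \cite{Ma2}; what the citation buys is the more general non-normal lemma, which the author uses elsewhere.
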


\begin{proof}
This follows from Lemma 4.1.1 of \cite{Ma2} by noting that $H\stackrel{\Ad}{\curvearrowright}\mf{g}/\mf{h}$ is trivial. 
\end{proof}

Assume $H^1(\mca{F})=H^1(\mf{s})$ for an action $M\stackrel{\rho_0}{\curvearrowleft}S$ and let $\rho\in\mca{A}(\mca{F},S)$ and $\varphi_\rho\colon\mf{s}\to\mf{s}/\mf{h}$, $a_\rho\colon M\times S\to S$ as above. Let $K_\rho$ and $H$ be the Lie subgroups corresponding to $\ker\varphi_\rho$ and $\mf{h}$. Then $S/K_\rho$ and $S/H$ are vector groups. Let $\tilde{\varphi}_\rho\colon S/K_\rho\to S/H$ be the linear isomorphism with differential $\varphi_\rho\colon\mf{s}/\ker\varphi_\rho\simeq\mf{s}/\mf{h}$. 

\begin{prop}[Maruhashi \cite{Ma2}]\label{tt}
For any $\rho\in\mca{A}(\mca{F},S)$, $x\in M$ and $\mf{h}$, consider the diagram 
\begin{equation*}
\begin{tikzcd}
S\ar[r,"{a_\rho(x,\cdotp)}"]\ar[d]&S\ar[d]\\
S/K_\rho\ar[r,"\tilde{\varphi}_\rho"']&S/H, 
\end{tikzcd}
\end{equation*}
where the vertical maps are the natural projections. Fix an inner product of $\mf{s}$ and give $S$, $S/K_\rho$ and $S/H$ left invariant Riemannian metrics considered in Proposition \ref{metric}. Then $a_\rho(x,\cdotp)$ is a fiber respecting biLipschitz diffeomorphism over $\tilde{\varphi}_\rho$. (In particular $a_\rho(x,\cdotp)$ is a quasiisometry.)
\end{prop}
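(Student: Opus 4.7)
The plan is to derive, from the relation $p\omega = \varphi_\rho \omega_0 + \df h$ of the preliminaries, an explicit identity on $S$ exhibiting $q \circ a_\rho(x, \cdot)$ as $\tilde\varphi_\rho \circ q_\rho$ up to a bounded error coming from $h$; here $q \colon S \to S/H$ and $q_\rho \colon S \to S/K_\rho$ denote the natural projections. Fiber respectingness will then follow by combining this identity with Proposition \ref{metric}, while the biLipschitz estimate will come independently from a pointwise comparison of $\omega$ and $\omega_0$.

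Let $\iota_x^0 \colon S \to M$ be the $\rho_0$-orbit map $s \mapsto \rho_0(x, s)$, and define $\iota_x$ analogously for $\rho$. A direct computation shows $(\iota_x^0)^*\omega_0 = \iota_x^*\omega = \theta_S$, the left Maurer--Cartan form of $S$. Since $\iota_x^0 = \iota_x \circ a_\rho(x, \cdot)$ by the definition of $a_\rho$, we get $(a_\rho(x, \cdot))^* \theta_S = (\iota_x^0)^*\omega$. Applying $p \colon \mf{s} \to \mf{s}/\mf{h}$ and using $p\omega = \varphi_\rho \omega_0 + \df h$ gives $p \cdot (a_\rho(x, \cdot))^* \theta_S = \varphi_\rho \theta_S + d(h \circ \iota_x^0)$ on $S$. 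Because $S/H$ and $S/K_\rho$ are vector groups, their projections satisfy $dq = p \circ \theta_S$ and $dq_\rho = p_\rho \circ \theta_S$ for the projection $p_\rho \colon \mf{s} \to \mf{s}/\ker\varphi_\rho$, and $\varphi_\rho = \tilde\varphi_\rho \circ p_\rho$; integrating from the identity and using $a_\rho(x, 1) = 1$ yields
\begin{equation*}
q\bigl(a_\rho(x, s)\bigr) = \tilde\varphi_\rho\bigl(q_\rho(s)\bigr) + h(\rho_0(x, s)) - h(x) \qquad (s \in S).
\end{equation*}

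For the biLipschitz estimate, the smooth map $A \colon M \to \GL(\mf{s})$, $A(y) := \omega_y \circ (\omega_0)_y^{-1}$, together with its inverse, is uniformly bounded by compactness of $M$. A short calculation from $\iota_x^0 = \iota_x \circ a_\rho(x, \cdot)$ produces $(a_\rho(x, \cdot))^* \theta_S = A(\iota_x^0(\cdot)) \cdot \theta_S$; since the left-invariant metric on $S$ satisfies $|v| = |\theta_S(v)|$, this pointwise bound translates into uniform Lipschitz bounds on $a_\rho(x, \cdot)$ and its inverse.

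Finally, for fiber respectingness set $C_0 := \sup_{y \in M}|h(y) - h(x)|$, which is finite by compactness. The displayed identity shows that every point of $a_\rho(x, sK_\rho)$ projects under $q$ to within distance $C_0$ of $\tilde\varphi_\rho(sK_\rho) \in S/H$, hence by Proposition \ref{metric} lies within distance $C_0$ of the target fiber $q^{-1}(\tilde\varphi_\rho(sK_\rho))$. Conversely, given $y$ in this target fiber, write $y = a_\rho(x, s')$ using that $a_\rho(x, \cdot)$ is a diffeomorphism of $S$; the identity forces $|q_\rho(s') - q_\rho(s)|_{S/K_\rho} \le \|\tilde\varphi_\rho^{-1}\| \cdot C_0$, so Proposition \ref{metric} supplies $s'' \in sK_\rho$ at this distance from $s'$, and applying the already established Lipschitz bound to $a_\rho(x, \cdot)$ controls $d(y, a_\rho(x, s''))$ uniformly in $s$. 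The subtlety that needs the most care is this reverse Hausdorff estimate: since $a_\rho(x, \cdot)$ need not carry $sK_\rho$ into $\tilde\varphi_\rho(sK_\rho) H$ on the nose, one must genuinely combine the integrated identity with the biLipschitz bound rather than argue directly from the diagram.
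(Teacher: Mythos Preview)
The paper does not give its own proof of this proposition; it simply refers the reader to Proposition 4.1.4 of \cite{Ma2}. So there is nothing in the present paper to compare your argument against directly.

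That said, your argument is correct and self-contained. The key identity
\[
q\bigl(a_\rho(x,s)\bigr)=\tilde\varphi_\rho\bigl(q_\rho(s)\bigr)+h\bigl(\rho_0(x,s)\bigr)-h(x)
\]
follows exactly as you say, by pulling back $p\omega=\varphi_\rho\omega_0+\df h$ along the orbit map $\iota_x^0$ and integrating (using that $S/H$ and $S/K_\rho$ are vector groups so that $dq=p\circ\theta_S$ and $dq_\rho=p_\rho\circ\theta_S$). The biLipschitz estimate from the pointwise bound on $A(y)=\omega_y\circ(\omega_0)_y^{-1}$ is straightforward, and your treatment of the two halves of the Hausdorff bound is right: the forward inclusion comes directly from the identity and the distance-respecting property of $q$, while for the reverse inclusion you correctly invert $a_\rho(x,\cdot)$ (which is a diffeomorphism since the analogous cocycle for $\rho$ over $\rho_0$ furnishes an inverse), bound $q_\rho(s')-q_\rho(s)$ via $\tilde\varphi_\rho^{-1}$, and then use the Lipschitz constant of $a_\rho(x,\cdot)$ to transport a nearby point of $sK_\rho$ forward. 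One minor point worth making explicit is that for a distance respecting projection $q$ one has $d\bigl(y,q^{-1}(b)\bigr)=d\bigl(q(y),b\bigr)$ for any single point $y$; this follows from the Hausdorff-distance half of the definition and is what you are implicitly using in the forward direction.
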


See Proposition 4.1.4 of \cite{Ma2} for this proposition.

\section{Reduction of the proof of Theorem \ref{at least 2} to Proposition \ref{qq}}\label{an1}
Let $G$ be a connected semisimple Lie group. Fix a Cartan decomposition $\mf{g}=\mf{k}\oplus\mf{p}$ and a maximal abelian subspace $\mf{a}$ of $\mf{p}$. Let $\Sigma$ be the restricted root system of $(\mf{g},\mf{a})$ and fix a positive system $\Sigma_+$ of $\Sigma$. Let $\mf{n}=\bigoplus_{\lambda\in\Sigma_+}\mf{g}_\lambda$, where 
\begin{equation*}
\mf{g}_\lambda=\lbrace X\in\mf{g}\mid[H,X]=\lambda(H)X\text{ for all $H\in\mf{a}$}\rbrace
\end{equation*}
is a restricted root space. Let $K$, $A$ and $N$ be the Lie subgroups corresponding to $\mf{k}$, $\mf{a}$ and $\mf{n}$. Then $G=KAN$ is an Iwasawa decomposition. The group $AN$ is a connected simply connected solvable Lie group and its Lie algebra is $\mf{an}=\mf{n}\rtimes\mf{a}$. 

It is easy to show that $\mf{n}$ is the nilradical of $\mf{an}$ and $\mf{n}=[\mf{an},\mf{an}]$. So we must take $\mf{h}=\mf{n}$ to apply Theorem \ref{SC2}. Then 
\begin{equation*}
\mf{an}/\mf{n}=\mf{a}\stackrel{\ad}{\curvearrowright}\Gr(\mf{n})=\bigoplus_{i\geq1}\mf{n}^i/\mf{n}^{i+1}
\end{equation*}
is isomorphic to 
\begin{equation*}
\mf{a}\stackrel{\ad}{\curvearrowright}\mf{n}=\bigoplus_{\lambda\in\Sigma_+}\mf{g}_\lambda.
\end{equation*}
To apply Theorem \ref{SC2}, we must show $H^1(\mca{F})=H^1(\mf{an})$ and then calculate cohomology with coefficient 
\begin{equation}\label{dec}
\mf{an}\stackrel{\ad\circ\varphi_\rho}{\curvearrowright}\Gr(\mf{n}),\ \ \ \ \text{ie}\ \ \ \ \mf{an}\stackrel{\ad\circ\varphi_\rho}{\curvearrowright}\mf{n}=\bigoplus_{\lambda\in\Sigma_+}\mf{g}_\lambda
\end{equation}
for any $\rho\in\mca{A}(\mca{F},AN)$, where $\varphi_\rho\colon\mf{an}\to\mf{a}$. Note that $\ker\varphi_\rho=\mf{n}$ and $\varphi_\rho|_\mf{a}\in\GL(\mf{a})$. The $\mf{g}_\lambda$-component $\mf{an}\stackrel{\ad\circ\varphi_\rho}{\curvearrowright}\mf{g}_\lambda$ in \eqref{dec} is a direct sum of the $1$-dimensional representation $\mf{an}\stackrel{\lambda\circ\varphi_\rho}{\curvearrowright}\bb{R}$. Therefore, we get the following. 

\begin{lem}\label{yy}
Let $G$ be a connected semisimple Lie group. Fix a Cartan decomposition $\mf{g}=\mf{k}\oplus\mf{p}$, a maximal abelian subspace $\mf{a}$ of $\mf{p}$ with the associated restricted root system $\Sigma$ and a positive system $\Sigma_+$ of $\Sigma$. Let $\mca{F}$ be the orbit foliation of a $C^\infty$ locally free action $M\stackrel{\rho_0}{\curvearrowleft}AN$ on a closed manifold $M$. If 
\begin{equation}\label{trivial}
H^1(\mca{F})=H^1(\mf{an})
\end{equation}
and 
\begin{equation}\label{nontrivial}
H^1\left(\mca{F};\mf{an}\stackrel{\lambda\circ\varphi_\rho}{\curvearrowright}\bb{R}\right)=H^1\left(\mf{an};\mf{an}\stackrel{\lambda\circ\varphi_\rho}{\curvearrowright}\bb{R}\right)
\end{equation}
for any $\lambda\in\Sigma_+$ and $\rho\in\mca{A}(\mca{F},AN)$, 
then $\rho_0$ is parameter rigid. 
\end{lem}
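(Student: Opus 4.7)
The plan is to verify the two hypotheses of Theorem \ref{SC2} for $\rho_0$, with the specific choice $\mf{h}=\mf{n}$. This choice is forced: because $\mf{n}$ is simultaneously the nilradical of $\mf{an}$ and equal to $[\mf{an},\mf{an}]$, the requirement $[\mf{an},\mf{an}]\subset\mf{h}\subset\mf{n}$ leaves $\mf{h}=\mf{n}$ as the only option. The first hypothesis $H^1(\mca{F})=H^1(\mf{an})$ is exactly \eqref{trivial}, so I would move on to arranging the second.

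For the second hypothesis, I would identify the coefficient representation $\mf{an}\stackrel{\ad\circ\varphi_\rho}{\curvearrowright}\Gr(\mf{n})$ as a direct sum of $1$-dimensional representations indexed by $\Sigma_+$. The key structural observation is that $\mf{a}$ acts on $\mf{n}$ by semisimple derivations with weights $\Sigma_+$, so the lower central series of $\mf{n}$ is $\mf{a}$-invariant and splits $\mf{a}$-equivariantly; hence
\begin{equation*}
\Gr(\mf{n})\cong\mf{n}=\bigoplus_{\lambda\in\Sigma_+}\mf{g}_\lambda
\end{equation*}
as $\mf{a}$-modules. Since $\varphi_\rho$ annihilates $\mf{n}=[\mf{an},\mf{an}]$ and restricts to an element of $\GL(\mf{a})$, the induced action of $\mf{an}$ on each $\mf{g}_\lambda$ reduces to scalar multiplication by $\lambda\circ\varphi_\rho\colon\mf{an}\to\bb{R}$, so $\mf{g}_\lambda$ becomes a direct sum of $\dim\mf{g}_\lambda$ copies of the $1$-dimensional representation $\mf{an}\stackrel{\lambda\circ\varphi_\rho}{\curvearrowright}\bb{R}$.

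Finally, I would use that both leafwise cohomology and Lie algebra cohomology commute with direct sums in the coefficient module, so applying the assumption \eqref{nontrivial} to each $\lambda\in\Sigma_+$ separately and summing gives
\begin{equation*}
H^1\left(\mca{F};\mf{an}\stackrel{\ad\circ\varphi_\rho}{\curvearrowright}\Gr(\mf{n})\right)=H^1\left(\mf{an};\mf{an}\stackrel{\ad\circ\varphi_\rho}{\curvearrowright}\Gr(\mf{n})\right)
\end{equation*}
for every $\rho\in\mca{A}(\mca{F},AN)$. Theorem \ref{SC2} then delivers the parameter rigidity of $\rho_0$. There is no substantive obstacle in this argument; the only structural input beyond bookkeeping is the $\mf{a}$-equivariant identification $\Gr(\mf{n})\cong\mf{n}$, which is a direct consequence of the semisimplicity of $\ad H|_{\mf{n}}$ for $H\in\mf{a}$.
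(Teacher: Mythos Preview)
Your proposal is correct and follows essentially the same approach as the paper: the paper's argument (given in the paragraph immediately preceding the lemma) also sets $\mf{h}=\mf{n}$ as the forced choice, identifies $\mf{a}\stackrel{\ad}{\curvearrowright}\Gr(\mf{n})$ with $\mf{a}\stackrel{\ad}{\curvearrowright}\bigoplus_{\lambda\in\Sigma_+}\mf{g}_\lambda$, observes that each $\mf{g}_\lambda$-component is a sum of copies of the $1$-dimensional representation $\mf{an}\stackrel{\lambda\circ\varphi_\rho}{\curvearrowright}\bb{R}$, and then invokes Theorem~\ref{SC2}. Your additional remark that the $\mf{a}$-equivariant identification $\Gr(\mf{n})\cong\mf{n}$ comes from the semisimplicity of $\ad H|_{\mf{n}}$ is a helpful clarification that the paper leaves implicit.
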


Before proving Theorem \ref{at least 2} we remark that the same result but with a stronger assumption of real rank at least $3$ follows easily from the following result of Kononenko \cite[Theorem 8.2]{K2}. 

\begin{thm}[Kononenko \cite{K2}]\label{nontri}
Let $G$ be a connected semisimple Lie group with finite center of real rank at least $3$ whose simple factors are of real rank at least $2$, and let $\Gamma$ be an irreducible cocompact lattice in $G$. Let $G=KAN$ be an Iwasawa decomposition. Take $\mu\colon\mf{an}\to\bb{R}$ be any nonzero linear function which vanishes on $\mf{n}$, and let $\tilde{\mu}\colon AN\to\GL(1,\bb{R})$ be the homomorphism with differential $\mu$. Then any $\tilde{\mu}$-twisted $C^\infty$ cocycle over the action $\Gamma\backslash G\curvearrowleft AN$ by right multiplication is $C^\infty$ cohomologous to a constant cocycle. Equivalently we have 
\begin{equation*}
H^1\left(\mca{F};\mf{an}\stackrel{\mu}{\curvearrowright}\bb{R}\right)=H^1\left(\mf{an};\mf{an}\stackrel{\mu}{\curvearrowright}\bb{R}\right). 
\end{equation*}
\end{thm}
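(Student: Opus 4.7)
The plan is to interpret the twisted cocycle as a leafwise 1-form $\omega \in \Gamma(T^*\mca{F})$ satisfying $d_\mca{F}\omega + \mu(\omega_0)\wedge\omega = 0$ and to trivialize it modulo the Lie algebra cohomology contribution by representation-theoretic methods in the spirit of Katok--Spatzier \cite{KS}, \cite{KS2}, but carrying the twist by $\mu$ throughout. Concretely, one seeks $h \in C^\infty(\Gamma\backslash G)$ and $\ell \in \Hom(\mf{an},\bb{R})$ closed for the $\mu$-twisted Chevalley--Eilenberg differential such that
\begin{equation*}
\omega = \omega_0^*\ell + d_\mca{F} h + \mu(\omega_0)\, h.
\end{equation*}

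Since $\Gamma$ is cocompact, $L^2(\Gamma\backslash G)$ decomposes discretely as a Hilbert sum $\widehat{\bigoplus}_\pi m_\pi V_\pi$ over the unitary dual of $G$. I would decompose $\omega$ accordingly on the level of $C^\infty$ vectors, using Sobolev theory on $\Gamma\backslash G$ to reduce the $C^\infty$ problem to $L^2$ estimates, and handle each isotypic component separately. The trivial $G$-representation contributes only constants, for which the twisted closedness equation reduces via pullback by $\omega_0$ to the Chevalley--Eilenberg equation on $\mf{an}$, yielding exactly the $H^1(\mf{an};\mf{an}\stackrel{\mu}{\curvearrowright}\bb{R})$ summand. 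What remains is to show that the $V_\pi$-component of $\omega$ is cohomologically trivial for every nontrivial $V_\pi$, with transfer functions that assemble into a smooth global function.

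For nontrivial $V_\pi$ the plan is to solve $(X+\mu(X))\, h_\pi = \omega_\pi(X_M)$ in $V_\pi^\infty$ for a well-chosen $X \in \mf{a}$, where $X_M$ is the vector field on $M$ generated by $X$, and then verify that $h_\pi$ is independent of the choice of $X$ so it serves as a simultaneous transfer function for the full $\mf{a}$-action. Since $X + \mu(X)$ shifts the $\mf{a}$-spectrum on $V_\pi^\infty$ by $\mu(X)$, solvability amounts to keeping $\mu(X)$ bounded away from the spectrum uniformly in $\pi$. Howe--Moore mixing and Cowling-type quantitative decay of matrix coefficients provide spectral gap, and the rank $\geq 3$ hypothesis with rank $\geq 2$ on each simple factor yields enough room in $\mf{a}$ to choose such an $X$ uniformly over all nontrivial $\pi$. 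Once the $\mf{a}$-part of $\omega$ is trivialized, the $\mf{n}$-part is forced by the Maurer--Cartan constraint together with $\mu|_\mf{n} = 0$, reducing to an ordinary untwisted $N$-cocycle that is trivialized by standard integration along $N$-orbits.

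The main obstacle is establishing these spectral estimates uniformly in $\pi$ with the twist $\mu$ included, and producing a transfer function of class $C^\infty$ rather than merely $L^2$ on $\Gamma\backslash G$. This is where the stronger rank hypothesis enters compared to the untwisted case: for $\mu = 0$ one benefits directly from Howe--Moore, while a nonzero $\mu$ can align with nontrivial $\mf{a}$-eigenvalues on some $V_\pi$ unless one has enough rank to steer $X$ transverse to the spectrum on every nontrivial $V_\pi$. The assumption of real rank $\geq 3$ with each simple factor of rank $\geq 2$ provides exactly this flexibility.
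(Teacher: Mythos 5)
This is a cited result: the paper attributes Theorem~\ref{nontri} to Kononenko (Theorem 8.2 of \cite{K2}) and does not prove it, so there is no internal argument in this paper against which your sketch can be checked. What you have written is therefore at best a reconstruction of what you guess Kononenko did, not something that can be compared line by line with the paper.

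On its own merits, your outline---discretely decompose $L^2(\Gamma\backslash G)$, extract the trivial-representation summand as the source of $H^1\left(\mf{an};\mf{an}\stackrel{\mu}{\curvearrowright}\bb{R}\right)$, and on each nontrivial $V_\pi$ solve a twisted first-order equation along a well-chosen $X\in\mf{a}$ using quantitative decay of matrix coefficients---is in the Katok--Spatzier spirit and plausibly resembles the strategy in \cite{K2}. But two steps are under-argued and as stated would not close. First, the claim that once the $\mf{a}$-part is handled the $\mf{n}$-part is trivialized ``by standard integration along $N$-orbits'' is not standard: $N$ is noncompact, so the naive orbit integral need not converge, and one still needs either decay of matrix coefficients for $N$ or the usual commutator/bootstrap argument that exploits the restricted root space structure. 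Second, your justification of the rank hypothesis is only asserted; the genuine issue is a uniform spectral gap for the operator $X + \mu(X)$ across all nontrivial $\pi$ in the decomposition, and you would have to explain concretely how real rank at least $3$ with each simple factor of rank at least $2$ buys this (a dimension count in $\mf{a}$ is not enough, since the bad set of $X$ depends on $\pi$). To verify or correct these points you would need to consult \cite{K2} directly rather than this paper.
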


\begin{cor}
Let $G$ be a connected semisimple Lie group with finite center of real rank at least $3$ whose simple factors are of real rank at least $2$, and let $\Gamma$ be an irreducible cocompact lattice in $G$. Let $G=KAN$ be an Iwasawa decomposition. Then the action $\Gamma\backslash G\curvearrowleft AN$ by right multiplication is parameter rigid. 
\end{cor}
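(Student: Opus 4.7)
The plan is to invoke Lemma \ref{yy} applied to $\rho_0$ equal to the action $\Gamma\backslash G\curvearrowleft AN$ by right multiplication, whose orbit foliation I denote $\mca{F}$. This reduces the corollary to verifying the two cohomological conditions \eqref{trivial} and \eqref{nontrivial}.

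I would handle the twisted condition \eqref{nontrivial} by a direct appeal to Kononenko's Theorem \ref{nontri}. Fix any $\rho\in\mca{A}(\mca{F},AN)$ and any $\lambda\in\Sigma_+$. Because $\ker\varphi_\rho=\mf{n}$ and $\varphi_\rho|_\mf{a}\in\GL(\mf{a})$, while $\lambda$ is a nonzero functional on $\mf{a}$, the composite $\mu=\lambda\circ\varphi_\rho\colon\mf{an}\to\bb{R}$ is a nonzero linear functional that vanishes on $\mf{n}$. The assumptions on $G$ and $\Gamma$ in the corollary coincide with those of Theorem \ref{nontri}, so that theorem yields
\[
H^1\!\left(\mca{F};\mf{an}\stackrel{\mu}{\curvearrowright}\bb{R}\right)=H^1\!\left(\mf{an};\mf{an}\stackrel{\mu}{\curvearrowright}\bb{R}\right),
\]
which is precisely \eqref{nontrivial}.

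For the trivial coefficient condition \eqref{trivial} I would invoke the cohomology vanishing results advertised in the introduction, namely Theorem \ref{vani} and Corollary \ref{im} of the present paper, which are the incarnations in our framework of the results of Katok--Spatzier and Kanai. These give the identification $H^1(\mca{F})=H^1(\mf{an})$ for the $AN$-orbit foliation on $\Gamma\backslash G$. With both hypotheses of Lemma \ref{yy} in force, that lemma immediately yields parameter rigidity.

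There is essentially no obstacle in this argument: the only bookkeeping point is to note that $\mu=\lambda\circ\varphi_\rho$ remains a nonzero linear functional on $\mf{an}$ vanishing on $\mf{n}$ for \emph{every} $\rho\in\mca{A}(\mca{F},AN)$, which follows at once from $\varphi_\rho|_\mf{a}\in\GL(\mf{a})$. In particular, no large scale geometry is required — Kononenko's theorem in the rank $\ge 3$ regime already handles the full family of twisted coefficients called for by Lemma \ref{yy} uniformly in $\rho$, which is exactly what makes this corollary much softer than Theorem \ref{at least 2}.
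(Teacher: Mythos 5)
Your proposal is correct and follows exactly the same route as the paper: reduce via Lemma \ref{yy}, establish the trivial-coefficient condition from Corollary \ref{im} (the $\lambda=0$ case), and dispatch the twisted condition by observing that $\lambda\circ\varphi_\rho$ is a nonzero functional on $\mf{an}$ vanishing on $\mf{n}$ so that Kononenko's Theorem \ref{nontri} applies. The only difference is that you spell out the verification that $\mu=\lambda\circ\varphi_\rho$ meets the hypotheses of Theorem \ref{nontri}, which the paper leaves implicit.
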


\begin{proof}
Under the assumptions of this corollary, \eqref{trivial} in Lemma \ref{yy} follows from the case $\lambda=0$ in Corollary \ref{im} below and \eqref{nontrivial} in Lemma \ref{yy} follows from Theorem \ref{nontri}, which imply parameter rigidity of the action. 
\end{proof}

But this does not cover the case of real rank $2$. In this case we only know vanishing of the cohomology with coefficients corresponding to restricted roots. 

\begin{thm}\label{vani}
Let $G$ be a connected semisimple Lie group with finite center of real rank at least $2$ without compact factors or simple factors locally isomorphic to $\SO_0(n,1)$ $(n\geq2)$ or $\SU(n,1)$ $(n\geq2)$, and let $\Gamma$ be an irreducible cocompact lattice in $G$. Fix a Cartan decomposition $\mf{g}=\mf{k}\oplus\mf{p}$ and a maximal abelian subspace $\mf{a}$ of $\mf{p}$ with the associated restricted root system $\Sigma$. Let $\mca{F}_A$ be the orbit foliation of the action $\Gamma\backslash G\curvearrowleft A$ by right multiplication. Then we have: 
\begin{enumerate}
\item {\rm (Katok--Spatzier \cite[Theorem 3.6]{KS2})}
\begin{equation*}
H^1(\mca{F}_A)=H^1(\mf{a})
\end{equation*}
\item {\rm (Kanai \cite[Theorem 2.2]{Ka})}
\begin{equation*}
H^1\left(\mca{F}_A;\mf{a}\stackrel{\lambda}{\curvearrowright}\bb{R}\right)=H^1\left(\mf{a};\mf{a}\stackrel{\lambda}{\curvearrowright}\bb{R}\right)=0
\end{equation*}
for any $\lambda\in\Sigma$. 
\end{enumerate}
\end{thm}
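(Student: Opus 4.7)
The plan is essentially to reduce the statement to direct citations of \cite[Theorem 3.6]{KS2} and \cite[Theorem 2.2]{Ka}, after verifying that the hypotheses of our formulation coincide with those of the original sources. The hypotheses on $G$ (connected, finite center, real rank at least $2$, no compact factors, and no simple factors locally isomorphic to $\SO_0(n,1)$ or $\SU(n,1)$) and on $\Gamma$ (irreducible cocompact) match both papers, as does the Iwasawa setup $G=KAN$ and the identification of $\mca{F}_A$ with the orbit foliation of $\Gamma\backslash G\curvearrowleft A$ by right multiplication. So the proof is really just the verification of this dictionary plus a citation.

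For orientation, the strategy behind part (1) is to decompose $C^\infty(\Gamma\backslash G)$ into $G$-isotypic components using representation theory of the semisimple group $G$, and inside each component to reduce the cohomological equation for a $\df$-closed leafwise $1$-form to a family of first-order equations along the one-parameter subgroups of $A$. The spectral gap provided by property (T), available under our assumptions on the simple factors, makes these equations uniquely solvable in $L^2$; the rank at least $2$ assumption, which supplies two commuting directions in $\mf{a}$, is used to close up formal solutions and match data on overlapping chambers, and the $C^\infty$ regularity of the resulting primitive follows from elliptic/Sobolev estimates applied inside each irreducible component.

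For part (2), the Lie-algebra identity $H^1(\mf{a};\mf{a}\xrightarrow{\lambda}\bb{R})=0$ is an elementary Chevalley--Eilenberg computation: since $\lambda\neq 0$ one picks $H\in\mf{a}$ with $\lambda(H)\neq 0$ and solves the coboundary equation algebraically. The foliated identity is the content of Kanai's theorem; the approach is again representation-theoretic, but one now twists by the character of $A$ integrating $\lambda$, and exploits the decay of matrix coefficients on $L^2(\Gamma\backslash G)$ together with the spectral gap to invert the relevant first-order operator in each irreducible summand with $C^\infty$ estimates.

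The principal obstacle I anticipate is purely bookkeeping: the conventions of leafwise cohomology used in the preliminaries must be aligned with the definitions of ``$\tilde\mu$-twisted cocycle over an abelian action'' employed in \cite{KS2} and \cite{Ka}. Concretely, the character $\lambda\colon\mf{a}\to\bb{R}$ must correspond on both sides to the same direction of twisting, and the trivial bundle $\Gamma\backslash G\times\bb{R}$ with connection determined by $\lambda\omega_0$ must be identified with the line bundle implicit in the cocycle-language of the cited papers. Once this dictionary is fixed, the theorem is a direct quotation.
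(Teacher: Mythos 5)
Your approach—reduce to a direct citation of Katok--Spatzier and Kanai after matching hypotheses—is indeed exactly what the paper does; Theorem~\ref{vani} carries no further proof, only a remark. However, the obstacle you flag (translating between leafwise-cohomology language and the twisted-cocycle language of the sources) is not the one the paper actually needs to address. The paper's accompanying remark points out that Kanai's Theorem~2.2(2) is stated \emph{conditionally}: as written, it asserts smoothness of the primitive $u$ only when two hypotheses (i) and (ii) from that paper hold. The substantive step, which your proposal omits, is to observe that those conditions are in fact always satisfied under the standing assumptions, so the conditional statement in \cite{Ka} yields the unconditional vanishing claimed in part~(2). Without that observation a bare citation of Kanai's theorem does not literally give $H^1(\mca{F}_A;\mf{a}\stackrel{\lambda}{\curvearrowright}\bb{R})=0$. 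Your description of the representation-theoretic strategy inside the cited works is fine but not load-bearing here; the one piece of real content is the verification that Kanai's auxiliary conditions are automatic, and that should be stated explicitly.
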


\begin{rem}
In Theorem 2.2 (2) of \cite{Ka} it is written that $u$ (a notation from \cite{Ka}) is $C^\infty$ if the conditions (i) and (ii) from that paper are satisfied, but those conditions (i) and (ii) are {\em always} satisfied, so that we get the above result. 
\end{rem}

\begin{cor}\label{im}
Let $G$ be a connected semisimple Lie group with finite center of real rank at least $2$ without compact factors or simple factors locally isomorphic to $\SO_0(n,1)$ $(n\geq2)$ or $\SU(n,1)$ $(n\geq2)$, and let $\Gamma$ be an irreducible cocompact lattice in $G$. Fix a Cartan decomposition $\mf{g}=\mf{k}\oplus\mf{p}$, a maximal abelian subspace $\mf{a}$ of $\mf{p}$ with the associated restricted root system $\Sigma$ and a positive system $\Sigma_+$ of $\Sigma$. Let $\mca{F}$ be the orbit foliation of the action $\Gamma\backslash G\curvearrowleft AN$ by right multiplication. Then we have 
\begin{equation*}
H^1\left(\mca{F};\mf{an}\stackrel{\lambda}{\curvearrowright}\bb{R}\right)=H^1\left(\mf{an};\mf{an}\stackrel{\lambda}{\curvearrowright}\bb{R}\right)
\end{equation*}
for any $\lambda\in\Sigma\cup\lbrace0\rbrace$, where $\lambda\colon\mf{a}\to\bb{R}$ is regarded as $\lambda\colon\mf{an}\to\bb{R}$ by extending it as $0$ on $\mf{n}$. 
\end{cor}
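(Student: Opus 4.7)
The plan is to take a leafwise $1$-cocycle $\alpha$ for the coefficient $\lambda\colon\mf{an}\to\bb{R}$ on the $AN$-foliation $\mca{F}$, restrict it to the sub-foliation $\mca{F}_A$ (where Theorem~\ref{vani} applies), and then use the full cocycle condition together with ergodicity of the right $A$-action on $\Gamma\backslash G$ to pin down the remaining components in the $N$-direction.

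Since each leaf of $\mca{F}_A$ lies in a leaf of $\mca{F}$, the restriction $\alpha|_{T\mca{F}_A}$ is a leafwise $1$-cocycle on $\mca{F}_A$ with coefficient $\lambda|_\mf{a}$. Theorem~\ref{vani} gives: if $\lambda=0$, then $\alpha|_{T\mca{F}_A}=d_{\mca{F}_A}f+\psi\circ\omega_0|_{T\mca{F}_A}$ for some $f\in C^\infty(\Gamma\backslash G)$ and $\psi\in\mf{a}^*$; if $\lambda\in\Sigma$, then $\alpha|_{T\mca{F}_A}=d_{\mca{F}_A}f+\lambda\omega_0|_{T\mca{F}_A}\cdot f$. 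Define $\varphi\in\mf{an}^*$ by $\varphi|_\mf{a}=\psi$ (take $\psi=0$ in the second case) and $\varphi|_\mf{n}=0$. Since $[\mf{an},\mf{an}]\subset\mf{n}$, a direct check shows $\varphi$ is a Lie algebra $1$-cocycle for the $\lambda$-representation, and $\alpha':=\alpha-\df f-\lambda\omega_0\cdot f-\omega_0^*\varphi$ is a cocycle satisfying $\alpha'|_{T\mca{F}_A}\equiv0$.

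Next I would exploit the cocycle equation $\df\alpha'+(\lambda\omega_0)\wedge\alpha'=0$ across the two directions. For $Y_0\in\mf{g}_\mu\subset\mf{n}$, let $Y_0^*$ denote the fundamental vector field of $\rho_0$ and set $g_{Y_0}:=\alpha'(Y_0^*)\in C^\infty(\Gamma\backslash G)$. Evaluating the equation on $(H^*,Y_0^*)$ with $H\in\mf{a}$ and using $[H^*,Y_0^*]=\mu(H)Y_0^*$, $\alpha'(H^*)=0$, $\lambda(Y_0)=0$ yields the eigenfunction equation $H^*g_{Y_0}=(\mu-\lambda)(H)g_{Y_0}$. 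Since the right $A$-action on $\Gamma\backslash G$ preserves the Haar measure and is ergodic (Moore's theorem applied to the irreducible lattice $\Gamma$), a smooth $g$ with $H^*g=cg$, $c\ne 0$ must vanish (the flow of $H^*$ is measure preserving but would scale $\|g\|_{L^2}$ by $e^{ct}$), and any $A$-invariant smooth function is constant. Hence $g_{Y_0}=0$ for $\mu\ne\lambda$, and $g_{Y_0}=\psi_\lambda(Y_0)$ is constant for $Y_0\in\mf{g}_\lambda$. For $\lambda=0$ or $\lambda\in-\Sigma_+$ no positive root equals $\lambda$, so $\alpha'=0$ and we are done. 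For $\lambda\in\Sigma_+$, extending $\psi_\lambda$ by zero on $\mf{a}$ and on the $\mf{g}_\mu$ with $\mu\ne\lambda$ yields $\varphi_\lambda\in\mf{an}^*$ with $\alpha'=\omega_0^*\varphi_\lambda$; evaluating the cocycle equation on $(X_0^*,Y_0^*)\in T\mca{F}_N\times T\mca{F}_N$ shows $\psi_\lambda([\mf{g}_{\mu_1},\mf{g}_{\mu_2}])=0$ whenever $\mu_1+\mu_2=\lambda$, which is exactly the Lie algebra cocycle condition making $[\varphi_\lambda]\in H^1(\mf{an};\lambda)$.

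The main technical step is the passage from the eigenfunction equation $H^*g_{Y_0}=(\mu-\lambda)(H)g_{Y_0}$ to vanishing or constancy of $g_{Y_0}$; this uses only measure-preservation and ergodicity of the $A$-action on $\Gamma\backslash G$, both standard for irreducible lattices in semisimple groups without compact factors.
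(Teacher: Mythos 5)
Your proposal is correct and follows essentially the same route as the paper's proof: restrict to $\mca{F}_A$, apply Theorem~\ref{vani}, subtract a coboundary plus a constant form to kill the $\mca{F}_A$-restriction, then propagate to the $\mf{n}$-directions via the full cocycle equation evaluated on $(H^*,Y_0^*)$, getting exactly the eigenfunction equation $H^*g_{Y_0}=(\mu-\lambda)(H)g_{Y_0}$ and concluding by boundedness when $\mu\ne\lambda$ (the paper uses the compactness bound directly; your $L^2$-scaling argument is an equivalent variant) and by Moore ergodicity when $\mu=\lambda$. The only cosmetic difference is that you spell out the Lie algebra cocycle condition for the resulting constant form, which the paper leaves implicit since a leafwise-constant leafwise cocycle automatically pulls back from a Lie algebra cocycle by injectivity of the chain map $\Hom(\bigwedge^*\mf{an},\bb{R})\hookrightarrow\Gamma(\bigwedge^*T^*\mca{F})$.
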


\begin{proof}
Let $[\omega]\in H^1\left(\mca{F};\mf{an}\stackrel{\lambda}{\curvearrowright}\bb{R}\right)$, that is, $\df\omega+\lambda\omega_0\wedge\omega=0$, where $\omega_0$ is the canonical $1$-form of $\Gamma\backslash G\curvearrowleft AN$. By restriction to $T\mca{F}_A$ we get $d_{\mca{F}_A}\omega+\lambda\omega_0\wedge\omega=0$. Note that $\omega_0$ restricts to the canonical $1$-form of $\Gamma\backslash G\curvearrowleft A$. So 
\begin{equation*}
[\omega]\in H^1\left(\mca{F}_A;\mf{a}\stackrel{\lambda}{\curvearrowright}\bb{R}\right)=H^1\left(\mf{a};\mf{a}\stackrel{\lambda}{\curvearrowright}\bb{R}\right). 
\end{equation*}
There exist a linear map $\phi\colon\mf{a}\to\bb{R}$ such that $\lambda(H)\phi(H^\prime)-\lambda(H^\prime)\phi(H)=0$ for all $H$, $H^\prime\in\mf{a}$ and a $C^\infty$ function $h\colon\Gamma\backslash G\to\bb{R}$ satisfying $\omega=\phi\omega_0+d_{\mca{F}_A}h+\lambda\omega_0h$. For any $H\in\mf{a}$ and $X\in\mf{g}_\mu$ for $\mu\in\Sigma_+$, 
\begin{align*}
0&=H\omega(X)-X\omega(H)-\mu(H)\omega(X)+\lambda(H)\omega(X)\\ 
&=H\omega(X)-X(\phi(H)+Hh+\lambda(H)h)-\mu(H)\omega(X)+\lambda(H)\omega(X)\\ 
&=H\omega(X)-HXh-[X,H]h-\lambda(H)Xh-\mu(H)\omega(X)+\lambda(H)\omega(X)\\ 
&=H(\omega(X)-Xh)+(\lambda(H)-\mu(H))(\omega(X)-Xh). 
\end{align*}
If $\mu\neq\lambda$, take $H_0\in\mf{a}$ such that $\lambda(H_0)-\mu(H_0)\neq0$. Then the above equation for $H=H_0$ and the boundedness of $\omega(X)-Xh$ imply $\omega(X)-Xh=0$. If $\mu=\lambda$, take $H\neq0$. We can apply Moore's Ergodicity Theorem since $G$ has finite center and no compact factor and $\Gamma$ is irreducible. So the flow $e^{tH}$ $(t\in\bb{R})$ has a dense orbit and $\omega(X)-Xh=\psi(X)$ for some $\psi(X)\in\bb{R}$. Let $\omega^\prime=\omega-\df h-\lambda\omega_0h$. Then 
\begin{align*}
\omega^\prime(H)&=\phi(H)\quad\text{for}\ H\in\mf{a}, \\
\omega^\prime(X)&=
\begin{cases}
0&\text{for}\ X\in\mf{g}_\mu\ \text{and}\ \mu\neq\lambda\\
\psi(X)&\text{for}\ X\in\mf{g}_\lambda\ \text{and}\ \lambda\in\Sigma_+. 
\end{cases}
\end{align*}
Therefore, $[\omega]=[\omega^\prime]\in H^1\left(\mf{an};\mf{an}\stackrel{\lambda}{\curvearrowright}\bb{R}\right)$. 
\end{proof}

By Corollary \ref{im}, the proof of Theorem \ref{at least 2} reduces to the following proposition. 

\begin{prop}\label{qq}
Let $G$ be a connected semisimple Lie group. Fix a Cartan decomposition $\mf{g}=\mf{k}\oplus\mf{p}$, a maximal abelian subspace $\mf{a}$ of $\mf{p}$ with the associated restricted root system $\Sigma$ and a positive system $\Sigma_+$ of $\Sigma$. Let $G=KAN$ be the corresponding Iwasawa decomposition. Let $M\stackrel{\rho_0}{\curvearrowleft}AN$ be a $C^\infty$ locally free action on a closed manifold $M$ with the orbit foliation $\mca{F}$. If 
\begin{equation*}
H^1(\mca{F})=H^1(\mf{an})
\end{equation*}
and 
\begin{equation*}
H^1\left(\mca{F};\mf{an}\stackrel{\lambda}{\curvearrowright}\bb{R}\right)=H^1\left(\mf{an};\mf{an}\stackrel{\lambda}{\curvearrowright}\bb{R}\right)
\end{equation*}
for all $\lambda\in\Sigma_+$, where $\lambda\colon\mf{a}\to\bb{R}$ is regarded as $\lambda\colon\mf{an}\to\bb{R}$ by extending it linearly as $0$ on $\mf{n}$, then $\rho_0$ is parameter rigid. 
\end{prop}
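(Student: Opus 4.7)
The plan is to apply Lemma \ref{yy} via Theorem \ref{SC2}. The first hypothesis of Lemma \ref{yy}, namely $H^1(\mca{F})=H^1(\mf{an})$, is already built into the statement of Proposition \ref{qq}, so what remains is to establish, for every $\rho\in\mca{A}(\mca{F},AN)$ and every $\lambda\in\Sigma_+$, the equality
\[
H^1\left(\mca{F};\mf{an}\stackrel{\lambda\circ\varphi_\rho}{\curvearrowright}\bb{R}\right)=H^1\left(\mf{an};\mf{an}\stackrel{\lambda\circ\varphi_\rho}{\curvearrowright}\bb{R}\right).
\]
Since $\ker\varphi_\rho=\mf{n}$ and $\lambda$ already vanishes on $\mf{n}$, the functional $\lambda\circ\varphi_\rho$ is determined by its restriction $\lambda\circ\varphi_\rho|_\mf{a}\in\mf{a}^*$. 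The second hypothesis of Proposition \ref{qq} supplies the required equality precisely when this restricted functional itself lies in $\Sigma_+$. Thus the entire proposition reduces to the purely algebraic statement that the dual isomorphism $(\varphi_\rho|_\mf{a})^*\colon\mf{a}^*\to\mf{a}^*$ sends $\Sigma_+$ into $\Sigma_+$ (and hence bijects $\Sigma_+$ with itself).

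To establish this I would invoke large scale geometry through Proposition \ref{tt}. Here the nilradical and the commutator subalgebra of $\mf{an}$ both equal $\mf{n}$, so one is forced to take $\mf{h}=\mf{n}$; since also $\ker\varphi_\rho=\mf{n}$, the groups $K_\rho$ and $H$ in Proposition \ref{tt} both coincide with $N$ and the quotients $S/K_\rho$, $S/H$ both become $A$. Equip $\mf{g}$ with an $\Ad(K)$-invariant inner product and $AN$ with the corresponding left-invariant Riemannian metric; by the Iwasawa decomposition $G=ANK$ the orbit map at the basepoint identifies $AN$ isometrically with the Riemannian symmetric space $G/K$, and under this identification the fibers of $AN\to A$ correspond to the horospheres based at the Furstenberg boundary point determined by the chamber $\mf{a}_+$. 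Proposition \ref{tt} then tells us that for any $x\in M$ the map $a_\rho(x,\cdot)\colon AN\to AN$ is a biLipschitz diffeomorphism, hence a quasi-isometry of $G/K$, which is fiber respecting over the linear automorphism $\tilde{\varphi}_\rho\colon A\to A$ with differential $\varphi_\rho|_\mf{a}$.

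The remaining task is to combine the quasi-isometric rigidity of Pansu \cite{Pa} and Kleiner--Leeb \cite{KL} with the refined rigidity of quasi-isometries of hyperbolic spaces due to Farb--Mosher \cite{FM} and Reiter Ahlin \cite{RA} to constrain $\tilde{\varphi}_\rho$ enough. Under the hypotheses on $G$, each rank one simple factor is locally isomorphic to $\Sp(n,1)$ or to the exceptional rank one group $F_4^{-20}$, to which Pansu's theorem applies, while Kleiner--Leeb handles the higher rank factors; the results of \cite{FM} and \cite{RA} then provide the additional control needed to push the rigidity down to the fibration $AN\to A$ and to conclude that the induced map $\tilde{\varphi}_\rho$ respects the asymptotic geometry finely enough to force $(\varphi_\rho|_\mf{a})^*(\Sigma_+)\subseteq\Sigma_+$. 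I expect the main obstacle to be precisely this final step: translating the ambient quasi-isometric rigidity into the sharp combinatorial conclusion that the \emph{positive} Weyl chamber itself, rather than some Weyl translate of it, is preserved by the induced map on $\mf{a}$, so that each positive root is sent to a positive root and not merely to an element of $\Sigma$.
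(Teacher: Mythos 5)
Your overall reduction is correct: by Lemma~\ref{yy} the whole matter comes down to showing $\lambda\circ\varphi_\rho|_\mf{a}\in\Sigma_+$ for every $\lambda\in\Sigma_+$ and every $\rho\in\mca{A}(\mca{F},AN)$, and Proposition~\ref{tt} together with the isometric identification $AN\simeq G/K$ is indeed how one enters large scale geometry. But there are two concrete problems with what you have written.

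First, you assert that ``under the hypotheses on $G$, each rank one simple factor is locally isomorphic to $\Sp(n,1)$ or $F_4^{-20}$''. There is no such hypothesis in Proposition~\ref{qq}. The proposition is stated for an arbitrary connected semisimple $G$ and the paper explicitly remarks that no assumption on the simple factors is needed here; the restriction to groups without $\SO_0(n,1)$ or $\SU(n,1)$ factors lives in Theorem~\ref{vani} (and hence in Theorem~\ref{at least 2}), not in Proposition~\ref{qq}. This matters because Pansu's theorem does \emph{not} give homothety rigidity of quasiisometries for real or complex hyperbolic space, so your strategy of treating every factor by Pansu or Kleiner--Leeb breaks down exactly for those factors. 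The paper handles them with a separate argument (Proposition~\ref{hyperbolic}, following Farb--Mosher and Reiter Ahlin) showing that a fiber-respecting quasiisometry of a rank one $AN$ over a map $h\colon A\to A$ forces $h$ to be coarsely the identity; since $\tilde\varphi_\rho$ is a linear automorphism that is coarsely the identity, it \emph{is} the identity, and the chamber condition is trivially satisfied.

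Second, and more importantly, you explicitly leave open ``the main obstacle'': upgrading ``$(\varphi_\rho|_\mf{a})^*$ permutes $\Sigma$'' to ``$(\varphi_\rho|_\mf{a})^*$ preserves $\Sigma_+$''. This is precisely where the work is, and the paper does not get it for free from Pansu or Kleiner--Leeb. There are also intermediate steps you have not addressed: one must first reduce to the irreducible factors, which requires Kleiner--Leeb's product decomposition theorem for quasiisometries together with an argument (a short lemma in the paper) that the projection $\tilde\varphi_\rho$ of a product quasiisometry itself splits as a product $\prod\tilde\varphi_i$ compatible with the permutation of factors. Then, in the higher-rank or $\Sp(n,1)$/$F_4^{-20}$ case, one replaces the quasiisometry by a genuine isometry $F$, normalizes so $F(K)=K$, shows via distance estimates that $F_*$ preserves $\mf a$ and restricts there to $\varphi$, invokes the Helgason description $\Isom(G/K)\simeq\Aut(\mf g)$ to realize $F$ as $I_\psi$ for $\psi\in\Aut(\mf g)$ preserving $\mf k$, $\mf p$, $\mf a$, and so sending $\Sigma$ to $\Sigma$; and finally one rules out $\psi$ sending the chamber $C_0$ to a different chamber by the geometric lemma that for distinct Weyl chambers $C$, $C'$ the horoballs $N_CK$ and $N_{C'}K$ are at infinite Hausdorff distance in $G/K$ (proved via an explicit $\mathfrak{sl}_2$ computation). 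That lemma is the ``sharp combinatorial conclusion'' you correctly identified as the crux but did not supply. Without it, and without the rank one case, the argument does not close.
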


Note that we need no assumption on the simple factors of $G$ in this proposition. 

\begin{rem}
In Theorem \ref{vani} we assume that \vspace{5pt}

\noindent
$(\ast)$ $G$ has no simple factors locally isomorphic to $\SO_0(n,1)$ $(n\geq2)$ or $\SU(n,1)$ $(n\geq2)$. \vspace{5pt}

\noindent
If $1$ in Theorem \ref{vani} is true without the assumption $(\ast)$, then $2$ in Theorem \ref{vani} and Corollary \ref{im} are true without the assumption $(\ast)$. Hence Theorem \ref{at least 2} will be true without the assumption $(\ast)$ by Proposition \ref{qq}. 
\end{rem}

\section{Proof of Proposition \ref{qq}}\label{an2}
To prove Proposition \ref{qq}, it suffices to show that $\lambda\circ\varphi_\rho|_\mf{a}\in\Sigma_+$ for any $\lambda\in\Sigma_+$ and any $\rho\in\mca{A}(\mca{F},AN)$ by Lemma \ref{yy}. At this moment we know $\varphi_\rho|_\mf{a}$ is only an element of $\GL(\mf{a})$, so it is not clear whether $\varphi_\rho|_\mf{a}$ preserves $\Sigma_+$. To prove it we need rigidity of quasiisometries of symmetric spaces. 

For the proof of Proposition \ref{qq} we may assume that $G$ has no compact factors, since this does not change $AN$. Recall that $\Inn(\mf{g})=\Ad(G)=G/Z(G)$, where $Z(G)$ denotes the center of $G$, and $G/Z(G)$ has the trivial center. Replacing $G$ with $G/Z(G)$ also does not change $AN$, so we may assume $G=\Inn(\mf{g})$ as well. 

The mapping $an\mapsto anK$ gives a canonical diffeomorphism $AN\simeq G/K$ by the Iwasawa decomposition. Henceforth we identify $AN$ with $G/K$ in this way. This is $AN$ equivariant. 

Recall that the identification $\mf{p}\simeq T_KG/K$ is by $X\mapsto\left.\frac{d}{dt}e^{tX}K\right\vert_{t=0}$. In the following $K$ denotes the subgroup $K$ or the point $K$ in $G/K$ depending on the context. $G$-invariant Riemannian metrics on $G/K$ are in one-to-one correspondence with inner products on $\mf{p}$ invariant under $K\stackrel{\Ad}{\curvearrowright}\mf{p}$. We equip $G/K$ with a $G$-invariant Riemannian metric $g$ corresponding to the restriction of $B_\theta$ to $\mf{p}$, where $\theta$ is the Cartan involution associated with the Cartan decomposition $\mf{g}=\mf{k}\oplus\mf{p}$, $B$ the Killing form of $\mf{g}$ and $B_\theta(X,Y)=-B(X,\theta Y)$ for $X,Y\in\mf{g}$. The restriction of $B_\theta$ to $\mf{p}$ is the same as the restriction of $B$ to $\mf{p}$. We give $AN$ the Riemannian metric which makes the identification $AN\simeq G/K$ an isometry. This Riemannian metric is $AN$ invariant. Geodesics in $G/K$ passing $K$ at time $0$ are of the form $e^{tX}K$ $(t\in\bb{R})$ for $X\in\mf{p}$. Note that $e^{tX}K$ $(t\in\bb{R})$ for $X\in\mf{g}\setminus\mf{p}$ is not a geodesic in general. In $AN$ curves of the form $ne^{tH}$ $(t\in\bb{R})$ for fixed $n\in N$ and $H\in\mf{a}$ are geodesics. 

The decomposition $\mf{g}=\mf{k}\oplus\mf{p}$ is orthogonal with respect to the positive definite symmetric bilinear form $B_\theta$. Let $\mf{g}_\lambda^\prime$ be the orthogonal projection to $\mf{p}$ with respect to $\mf{g}=\mf{k}\oplus\mf{p}$ of $\mf{g}_\lambda$ for $\lambda\in\Sigma$. The space $\mf{g}_\lambda^\prime$ has the same dimension as $\mf{g}_\lambda$ since $\mf{k}=\ker(\theta-\id)$ and $\theta\mf{g}_\lambda=\mf{g}_{-\lambda}$. This orthogonal projection maps $\mf{an}$ isomorphically to $\mf{p}$ by the Iwasawa decomposition $\mf{g}=\mf{k}\oplus\mf{a}\oplus\mf{n}$. Therefore, 
\begin{equation*}
\mf{p}=\mf{a}\oplus\mf{n}^\prime, \ \ \text{where} \ \ \mf{n}^\prime=\bigoplus_{\lambda\in\Sigma_+}\mf{g}_\lambda^\prime. 
\end{equation*}
Note that $\mf{a}\perp\mf{n}^\prime$ since $\mf{a}\perp\mf{g}_\lambda$ for $\lambda\in\Sigma$ and $\mf{a}\perp\mf{k}$ with respect to $B_\theta$. Observe that the differentiation 
\begin{equation*}
\begin{tikzcd}
\mf{an}\ar[r,"\sim"]\ar[d,equal]\ar[dr,phantom,"\circlearrowright" marking]&\mf{p}=\mf{a}\oplus\mf{n}^\prime\ar[d,equal]\\
T_1AN\ar[r,"\sim"]&T_KG/K
\end{tikzcd}
\end{equation*}
at $1$ of the identification $AN\simeq G/K$ maps $\mf{an}$ to $\mf{p}$ by the orthogonal projection with respect to $\mf{g}=\mf{k}\oplus\mf{p}$. Therefore, $\mf{a}$ maps identically to $\mf{a}$ and $\mf{n}$ maps isomorphically to $\mf{n}^\prime$. So $\mf{a}\perp\mf{n}$ in $\mf{an}$. 

For any $\rho\in\mca{A}(\mca{F},AN)$ and $x\in M$, consider the diagram 
\begin{equation*}
\begin{tikzcd}
AN\ar[r,"{a_\rho(x,\cdotp)}"]\ar[d,"p"']&AN\ar[d,"p"]\\
A\ar[r,"\tilde{\varphi}_\rho"',"\sim"]&A, 
\end{tikzcd}
\end{equation*}
where $p$ is the natural projection. We give $A$ a left invariant Riemannian metric for which the restriction $\mf{a}\to\mf{a}$ of the natural projection $\mf{an}\to\mf{a}$ to $\mf{n}^\perp=\mf{a}$ becomes an isometry, ie we consider the restriction of $B$ to $\mf{a}$. Then $p$ is a distance respecting projection by Proposition \ref{metric} and $a_\rho(x,\cdotp)$ is a fiber respecting biLipschitz diffeomorphism over $\tilde{\varphi}_\rho$ by Proposition \ref{tt}. 

Since $G=\Ad(G)$, we have $G=G_1\times\cdots\times G_\ell$, where $G_i$ is a connected noncompact simple Lie group with trivial center. Since any two maximal compact subgroups of $G$ are conjugate by an inner automorphism of $G$, we have $K=K_1\times\cdots\times K_\ell$, where $K_i$ is a maximal compact subgroup of $G_i$ and $G/K=G_1/K_1\times\cdots\times G_\ell/K_\ell$. Let $\mf{g}_i=\mf{k}_i\oplus\mf{p}_i$ be the Cartan decomposition. Then $\mf{p}=\mf{p}_1\oplus\cdots\oplus\mf{p}_\ell$. Let $g_i$ be the $G_i$-invariant Riemannian metric on $G_i/K_i$ corresponding to the restriction of the Killing form $B_i$ of $\mf{g}_i$ to $\mf{p}_i$. Since 
\begin{equation*}
B\left((X_1,\ldots,X_\ell),(Y_1,\ldots,Y_\ell)\right)=B_1(X_1,Y_1)+\cdots+B_\ell(X_\ell,Y_\ell)
\end{equation*}
for $X_i$, $Y_i\in\mf{g}_i$, we have $g=g_1\times\cdots\times g_\ell$. Since maximal abelian subspaces in $\mf{p}$ are conjugate by $\Ad(k)$ for some $k\in K$ and $\Ad(k)$ preserves each $\mf{p}_i$, we have $\mf{a}=\mf{a}_1\oplus\cdots\oplus\mf{a}_\ell$ for some maximal abelian subspace $\mf{a}_i$ of $\mf{p}_i$. Let 
\begin{equation*}
\mf{g}_i=\mf{a}_i\oplus\mf{m}_i\oplus\bigoplus_{\lambda_i\in\Sigma_i}(\mf{g}_i)_{\lambda_i}
\end{equation*}
be the restricted root space decomposition of $\mf{g}_i$. Then 
\begin{equation*}
\mf{g}=\bigoplus_{i=1}^\ell\mf{a}_i\oplus\bigoplus_{i=1}^\ell\mf{m}_i\oplus\bigoplus_{i=1}^\ell\bigoplus_{\lambda_i\in\Sigma_i}(\mf{g}_i)_{\lambda_i}
\end{equation*}
is the restricted root space decomposition of $\mf{g}$. Thus $\Sigma=\Sigma_1\cup\cdots\cup\Sigma_\ell$, where $\lambda_i\colon\mf{a}_i\to\bb{R}$ in $\Sigma_i$ is regarded as $\lambda_i\colon\mf{a}\to\bb{R}$ in $\Sigma$ by extending it linearly on $\mf{a}_j$ $(j\neq i)$ as $0$. Hence $\mf{g}_{\lambda_i}=(\mf{g}_i)_{\lambda_i}$ for $\lambda_i\in\Sigma_i$. Since any two simple systems of $\Sigma$ are conjugate by $\Ad(k)$ for some $k\in N_K(\mf{a})=N_{K_1}(\mf{a}_1)\times\cdots\times N_{K_\ell}(\mf{a}_\ell)$, it follows that $\Sigma_+=\Sigma_{1+}\cup\cdots\cup\Sigma_{\ell+}$ for some positive system $\Sigma_{i+}$ of $\Sigma_i$. Hence $\mf{n}=\mf{n}_1\oplus\cdots\oplus\mf{n}_\ell$, where $\mf{n}_i=\bigoplus_{\lambda_i\in\Sigma_{i+}}(\mf{g}_i)_{\lambda_i}$. Of course we also have 
\begin{gather*}
A=A_1\times\cdots\times A_\ell,\quad N=N_1\times\cdots\times N_\ell,\quad AN=A_1N_1\times\cdots\times A_\ell N_\ell. 
\end{gather*}
The metric $g$ on $AN$ decomposes as $g=g_1\times\cdots\times g_\ell$, where $g_i$ on $A_iN_i$ is defined by the identification $A_iN_i\simeq G_i/K_i$, $a_in_i\mapsto a_in_iK_i$. The same kind of decomposition holds for the metric on $A$. 

The map $a_\rho(x,\cdotp)\colon G/K\to G/K$ is a quasiisometry. By Kleiner and Leeb \cite[Theorem 1.1.2]{KL} there exist a permutation $\sigma\in\mf{S}_\ell$ and quasiisometries 
\begin{equation*}
\Phi_i\colon\left(G_i/K_i,g_i\right)\to\left(G_{\sigma(i)}/K_{\sigma(i)},g_{\sigma(i)}\right)
\end{equation*}
such that $a_\rho(x,\cdotp)$ and 
\begin{equation*}
\Phi\colon(x_1,\ldots,x_\ell)\mapsto\left(\Phi_{\sigma^{-1}(1)}\left(x_{\sigma^{-1}(1)}\right),\ldots,\Phi_{\sigma^{-1}(\ell)}\left(x_{\sigma^{-1}(\ell)}\right)\right)
\end{equation*}
are close. Then 
\begin{equation*}
\begin{tikzcd}
\Pi A_iN_i\ar[r,"\Phi"]\ar[d,"p"']&\Pi A_iN_i\ar[d,"p"]\\
\Pi A_i\ar[r,"\tilde{\varphi}_\rho"',"\sim"]&\Pi A_i
\end{tikzcd}
\end{equation*}
is fiber respecting. In fact, let $C>0$ be a constant such that 
\begin{equation*}
d_\mca{H}\left(a_\rho(x,aN),\tilde{\varphi}_\rho(a)N\right)<C
\end{equation*}
for all $a\in A$ and $C^\prime>0$ be such that 
\begin{equation*}
d\left(\Phi(s),a_\rho(x,s)\right)<C^\prime
\end{equation*}
for all $s\in AN$, then we have 
\begin{align*}
d_\mca{H}\left(\Phi(aN),\tilde{\varphi}_\rho(a)N\right)&\leq d_\mca{H}\left(\Phi(aN),a_\rho(x,aN)\right)+d_\mca{H}\left(a_\rho(x,aN),\tilde{\varphi}_\rho(a)N\right)\\ 
&<C^\prime+C
\end{align*}
for all $a\in A$. 

\begin{lem}
There exist linear isomorphisms $\varphi_i\colon\mf{a}_i\to\mf{a}_{\sigma(i)}$ such that 
\begin{equation*}
\tilde{\varphi}_\rho(a_1,\ldots,a_\ell)=\left(\tilde{\varphi}_{\sigma^{-1}(1)}\left(a_{\sigma^{-1}(1)}\right),\ldots,\tilde{\varphi}_{\sigma^{-1}(\ell)}\left(a_{\sigma^{-1}(\ell)}\right)\right)
\end{equation*}
for all $a_i\in A_i$ and 
\begin{equation*}
\begin{tikzcd}
A_iN_i\ar[r,"\Phi_i"]\ar[d,"p_i"']&A_{\sigma(i)}N_{\sigma(i)}\ar[d,"p_{\sigma(i)}"]\\
A_i\ar[r,"\tilde{\varphi}_i"',"\sim"]&A_{\sigma(i)}
\end{tikzcd}
\end{equation*}
is fiber respecting, where $p_i$ is the natural projection and $\tilde{\varphi}_i$ is the isomorphism with differential $\varphi_i$. 
\end{lem}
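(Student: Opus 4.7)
My plan is to read off the product structure of $\tilde\varphi_\rho$ from the factorization of $\Phi$ and then deduce the factor-wise fiber respecting property. The key mechanism is that the linearity of $\tilde\varphi_\rho$ upgrades ``bounded Hausdorff movement'' into ``identically zero''.

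First I would project the inequality $d_\mca H(\Phi(aN),\tilde\varphi_\rho(a)N)<C^\prime+C$ onto each factor. Because the product Riemannian metric dominates every factor metric, every coordinate projection $\pi_k\colon\prod A_jN_j\to A_kN_k$ is $1$-Lipschitz and hence does not increase Hausdorff distances. Using that $\pi_k(\Phi(p^{-1}(a)))=\Phi_{\sigma^{-1}(k)}(a_{\sigma^{-1}(k)}N_{\sigma^{-1}(k)})$ and $\pi_k(p^{-1}(\tilde\varphi_\rho(a)))=(\tilde\varphi_\rho(a))_kN_k$, where $(\tilde\varphi_\rho(a))_k$ denotes the $k$-th component, I obtain
\begin{equation*}
d_\mca H\bigl(\Phi_{\sigma^{-1}(k)}(a_{\sigma^{-1}(k)}N_{\sigma^{-1}(k)}),\,(\tilde\varphi_\rho(a))_kN_k\bigr)<C^\prime+C
\end{equation*}
for every $a\in\prod A_j$ and every $k$.

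Next I would exploit the asymmetry in dependencies: the left-hand side depends only on $a_{\sigma^{-1}(k)}$, so fixing that coordinate and varying the remaining $a_j$'s keeps the coset $(\tilde\varphi_\rho(a))_kN_k$ within uniformly bounded Hausdorff distance. By Proposition \ref{metric} applied to $A_kN_k\to A_k$, the Hausdorff distance between cosets $gN_k$ and $g^\prime N_k$ equals $d_{A_k}(g,g^\prime)$, so the linear functional $a\mapsto(\tilde\varphi_\rho(a))_k$ is bounded along every translate of the subspace $\prod_{j\neq\sigma^{-1}(k)}A_j$. A bounded linear map is identically zero, so this functional depends only on $a_{\sigma^{-1}(k)}$. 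Reindexing via $i=\sigma^{-1}(k)$ yields linear maps $\tilde\varphi_i\colon A_i\to A_{\sigma(i)}$ satisfying the product formula of the lemma; each is an isomorphism because $\tilde\varphi_\rho$ is, and its differential defines the desired $\varphi_i\colon\mf a_i\to\mf a_{\sigma(i)}$.

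Finally, substituting $a=(1,\ldots,a_i,\ldots,1)$ with $a_i$ in slot $i$ and $k=\sigma(i)$ into the displayed inequality gives
\begin{equation*}
d_\mca H\bigl(\Phi_i(a_iN_i),\,\tilde\varphi_i(a_i)N_{\sigma(i)}\bigr)<C^\prime+C
\end{equation*}
for all $a_i\in A_i$, which is exactly the required fiber respecting property of the factor diagram. The only nontrivial step is the middle one: converting the quasiisometric Kleiner--Leeb product decomposition into an exact algebraic product decomposition of $\tilde\varphi_\rho$. This rests on combining the distance respecting projection $A_kN_k\to A_k$ of Proposition \ref{metric} with the fact that a linear map bounded on a subspace vanishes on it.
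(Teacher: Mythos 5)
Your proposal is correct and follows essentially the same route as the paper's proof. Both arguments project the fiber-respecting inequality factor by factor (you make explicit that coordinate projections in a Riemannian product are $1$-Lipschitz and thus do not increase Hausdorff distance; the paper uses this implicitly), observe that the $k$-th component of $\Phi(p^{-1}(a))$ depends only on $a_{\sigma^{-1}(k)}$, and then use Proposition \ref{metric} to convert bounded Hausdorff distance of $N_k$-cosets into bounded distance in $A_k$, so that linearity forces the cross terms of $\varphi_\rho$ to vanish. The only difference is cosmetic: the paper runs the boundedness argument along one-parameter subgroups $t\mapsto e^{tH_i}$ and lets $t\to\infty$ to conclude $\phi_j(0,\ldots,H_i,\ldots,0)=0$, whereas you phrase it as a linear map bounded on a subspace being zero. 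Same mechanism, slightly different packaging, both correct.
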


\begin{proof}
Let $\tilde{\varphi}_\rho(a_1,\ldots,a_\ell)=\left(\tilde{\phi}_1(a_1,\ldots,a_\ell),\ldots,\tilde{\phi}_\ell(a_1,\ldots,a_\ell)\right)$ for $a_i\in A_i$. For fixed $i$ and for any $H_i\in\mf{a}_i$ and $t\in\bb{R}$, the Hausdorff distance between 
\begin{align}
\Phi&\left(p^{-1}\left(1,\ldots,1,e^{tH_i},1,\ldots,1\right)\right)\nonumber\\
&=\Phi\left(N_1\times\cdots\times N_{i-1}\times e^{tH_i}N_i\times N_{i+1}\times\cdots\times N_\ell\right)\nonumber\\ 
&=\Phi_{\sigma^{-1}(1)}\left(N_{\sigma^{-1}(1)}\right)\times\cdots\times\Phi_i\left(e^{tH_i}N_i\right)\times\cdots\times\Phi_{\sigma^{-1}(\ell)}\left(N_{\sigma^{-1}(\ell)}\right)\label{nnn}
\end{align}
and 
\begin{align}
\tilde{\phi}_1&\left(1,\ldots,e^{tH_i},\ldots,1\right)N_1\times\cdots\times\tilde{\phi}_\ell\left(1,\ldots,e^{tH_i},\ldots,1\right)N_\ell\nonumber\\ 
&=e^{t\phi_1(0,\ldots,H_i,\ldots,0)}N_1\times\cdots\times e^{t\phi_\ell(0,\ldots,H_i,\ldots,0)}N_\ell\label{mmm}
\end{align}
is bounded by a constant $C>0$, where $\phi_j$ is the differential of $\tilde{\phi}_j$. Thus 
\begin{equation*}
d_\mca{H}\left(\Phi_{\sigma^{-1}(j)}\left(N_{\sigma^{-1}(j)}\right),e^{t\phi_j(0,\ldots,H_i,\ldots,0)}N_j\right)<C
\end{equation*}
for $j\neq\sigma(i)$. Hence 
\begin{equation*}
d\left(e^{t\phi_j(0,\ldots,H_i,\ldots,0)},1\right)=d_\mca{H}\left(e^{t\phi_j(0,\ldots,H_i,\ldots,0)}N_j,N_j\right)<2C
\end{equation*}
for all $t\in\bb{R}$, which implies $\phi_j\left(0,\ldots,H_i,\ldots,0\right)=0$. Therefore, 
\begin{align*}
\tilde{\varphi}_\rho\left(e^{H_1},\ldots,e^{H_\ell}\right)&=\left(\tilde{\phi}_1\left(e^{(H_1,\ldots,H_\ell)}\right),\ldots,\tilde{\phi}_\ell\left(e^{(H_1,\ldots,H_\ell)}\right)\right)\\
&=\left(e^{\phi_1\left(0,\ldots,H_{\sigma^{-1}(1)},\ldots,0\right)},\ldots,e^{\phi_\ell\left(0,\ldots,H_{\sigma^{-1}(\ell)},\ldots,0\right)}\right)\\
&=\left(\tilde{\phi}_1\left(e^{H_{\sigma^{-1}(1)}}\right),\ldots,\tilde{\phi}_\ell\left(e^{H_{\sigma^{-1}(\ell)}}\right)\right)\\ 
&=\left(\tilde{\varphi}_{\sigma^{-1}(1)}\left(e^{H_{\sigma^{-1}(1)}}\right),\ldots,\tilde{\varphi}_{\sigma^{-1}(\ell)}\left(e^{H_{\sigma^{-1}(\ell)}}\right)\right), 
\end{align*}
where we put $\tilde{\varphi}_j=\tilde{\phi}_{\sigma(j)}\colon A_j\to A_{\sigma(j)}$. Finally by looking at $\sigma(i)$-th components of \eqref{nnn} and \eqref{mmm}, we have 
\begin{equation*}
d_\mca{H}\left(\Phi_i\left(e^{tH_i}N_i\right),\tilde{\varphi}_i\left(e^{tH_i}\right)N_{\sigma(i)}\right)=d_\mca{H}\left(\Phi_i\left(e^{tH_i}N_i\right),e^{t\phi_{\sigma(i)}(0,\ldots,H_i,\ldots,0)}N_{\sigma(i)}\right)<C, 
\end{equation*}
hence $\Phi_i$ is fiber respecting over $\tilde{\varphi}_i$. 
\end{proof}

Therefore, Proposition \ref{qq} follows if $\lambda\circ\varphi_{\sigma^{-1}(i)}\in\Sigma_{\sigma^{-1}(i)+}$ for all $\lambda\in\Sigma_{i+}$ since $\Sigma_+=\Sigma_{1+}\cup\cdots\cup\Sigma_{\ell+}$ and $\lambda\circ\varphi_\rho|_\mf{a}=\lambda\circ\varphi_{\sigma^{-1}(i)}$ for $\lambda\in\Sigma_{i+}$. 

Since $G_{\sigma(i)}/K_{\sigma(i)}$ and $G_i/K_i$ are quasiisometric, $\mf{g}_{\sigma(i)}$ and $\mf{g}_i$ are isomorphic. Fix an isomorphism $\alpha\colon\mf{g}_{\sigma(i)}\simeq\mf{g}_i$ such that 
\begin{gather*}
\alpha\left(\mf{k}_{\sigma(i)}\right)=\mf{k}_i,\quad\alpha\left(\mf{p}_{\sigma(i)}\right)=\mf{p}_i,\quad\alpha\left(\mf{a}_{\sigma(i)}\right)=\mf{a}_i
\end{gather*}
and $\alpha$ takes $\Sigma_{\sigma(i)+}$ to $\Sigma_{i+}$. Then $\alpha$ canonically induces isomorphisms 
\begin{gather*}
\mf{n}_{\sigma(i)}\simeq\mf{n}_i, \ G_{\sigma(i)}\simeq G_i, \ K_{\sigma(i)}\simeq K_i, \ A_{\sigma(i)}\simeq A_i, \ N_{\sigma(i)}\simeq N_i
\end{gather*}
and isometries 
\begin{gather*}
\left(G_{\sigma(i)}/K_{\sigma(i)},g_{\sigma(i)}\right)\simeq\left(G_i/K_i,g_i\right),\quad A_{\sigma(i)}N_{\sigma(i)}\simeq A_iN_i. 
\end{gather*}
In this way we identify $A_{\sigma(i)}N_{\sigma(i)}$ with $A_iN_i$ etc. Hence now 
\begin{equation*}
\begin{tikzcd}
A_iN_i\ar[r,"\Phi_i"]\ar[d,"p_i"']&A_iN_i\ar[d,"p_i"]\\
A_i\ar[r,"\tilde{\varphi}_i"',"\sim"]&A_i
\end{tikzcd}
\end{equation*}
is fiber respecting and to complete the proof of Proposition \ref{qq} it suffices to show $\lambda\circ\varphi_i\in\Sigma_{i+}$ for any $\lambda\in\Sigma_{i+}$. 

We consider the following two cases separately: 
\begin{itemize}
\item The group $G_i$ is of real rank at least $2$ or locally isomorphic to $\Sp(n,1)$ $(n\geq2)$ or $F_4^{-20}$. 
\item The group $G_i$ is of real rank $1$. 
\end{itemize}
We can treat $G_i$ locally isomorphic to $\Sp(n,1)$ $(n\geq2)$ or $F_4^{-20}$ in either cases. 

From now on we will no longer consider the original objects $G$, $K$, $A$, $N$, $g$, $\Sigma$, $\varphi_\rho$ etc and we will focus only on the decomposed objects $G_i$, $K_i$, $A_i$, $N_i$, $g_i$, $\Sigma_i$, $\varphi_i$ etc. Hence we will drop all the subscripts $i$ to simplify the notations. So we have 
\begin{equation*}
G, \mf{g}, K, \mf{k}, \mf{p}, \theta, B, A, \mf{a}, N, \mf{n}, \mf{n}^\prime, \Sigma, \Sigma_+, \mf{g}_\lambda, \mf{g}_\lambda^\prime, g, \varphi, \tilde{\varphi}, \Phi, p, 
\end{equation*}
but we do not have $M$, $\rho$ and $a_\rho$. Recall that $G=\Ad(G)$, $g$ is the restriction of $B$ at $T_KG/K=\mf{p}$, $AN$ is equipped with a Riemannian metric by the identification $AN\simeq G/K$, the Riemannian metric of $A$ is the one which makes $p_*|_\mf{a}\colon\mf{a}\simeq\mf{a}$ an isometry, and 
\begin{equation*}
\begin{tikzcd}
AN\ar[r,"\Phi"]\ar[d,"p"']&AN\ar[d,"p"]\\
A\ar[r,"\tilde{\varphi}"',"\sim"]&A
\end{tikzcd}
\end{equation*}
is fiber respecting. Under these conditions we must prove $\lambda\circ\varphi\in\Sigma_+$ for any $\lambda\in\Sigma_+$.

\subsection{The case where $G$ is of real rank at least $2$ or locally isomorphic to $\Sp(n,1)$ $(n\geq2)$ or $F_4^{-20}$}
By Kleiner--Leeb \cite[Theorem 1.1.3]{KL} for $G$ of real rank at least $2$ and by Pansu \cite[1. Th\'{e}or\`{e}me]{Pa} for $G$ locally isomorphic to $\Sp(n,1)$ $(n\geq2)$ or $F_4^{-20}$, there exists a homothety 
\begin{equation*}
F\colon(G/K,g)\to(G/K,g)
\end{equation*}
close to $\Phi$. Thus there is a constant $c>0$ such that $g\left(F_*X,F_*Y\right)=cg(X,Y)$ for all $x\in G/K$ and $X$, $Y\in T_xG/K$, so $F\colon(G/K,cg)\to(G/K,g)$ is an isometry. Since the isometry group of $G/K$ acts transitively, there exists the minimum $K_0\in(-\infty,0)$ of the sectional curvature of $(G/K,g)$. Then $cK_0$ is the minimum of the sectional curvature of $(G/K,cg)$. Since they are isometric we must have $K_0=cK_0$ hence $c=1$. Thus $F\colon(G/K,g)\to(G/K,g)$ is an isometry. 

Since $F$ is close to $\Phi$, 
\begin{equation*}
\begin{tikzcd}
AN\ar[r,"F"]\ar[d,"p"']&AN\ar[d,"p"]\\
A\ar[r,"\tilde{\varphi}"']&A
\end{tikzcd}
\end{equation*}
is fiber respecting. 

Let $F(1)^{-1}=a_0n_0\in AN$. We have 
\begin{equation*}
\begin{tikzcd}
AN\ar[r,"L_{a_0n_0}"]\ar[d,"p"']\ar[dr,phantom,"\circlearrowright" marking]&AN\ar[d,"p"]\\
A\ar[r,"L_{a_0}"']&A, 
\end{tikzcd}
\end{equation*}
where $L$ denotes the left multiplication. Since $L_{a_0n_0}$ is an isometry, 
\begin{equation*}
\begin{tikzcd}
AN\ar[r,"f"]\ar[d,"p"']&AN\ar[d,"p"]\\
A\ar[r,"L_{a_0}\circ\tilde{\varphi}"']&A
\end{tikzcd}
\end{equation*}
is fiber respecting, where $f=L_{a_0n_0}\circ F$. Since $L_{a_0}\circ\tilde{\varphi}$ and $\tilde{\varphi}$ are close, 
\begin{equation*}
\begin{tikzcd}
AN\ar[r,"f"]\ar[d,"p"']&AN\ar[d,"p"]\\
A\ar[r,"\tilde{\varphi}"']&A
\end{tikzcd}
\end{equation*}
is also fiber respecting. Note that $f$ is an isometry and $f(1)=1$. 

\begin{lem}
The map $\tilde{\varphi}$ is an isometry. 
\end{lem}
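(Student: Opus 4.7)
The plan is a two-step argument: first show that $\tilde{\varphi}$ differs from an isometry of $A$ by a bounded additive error, and then use the homogeneity of the vector group $A$ to promote this to a genuine isometry.

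For the first step I would combine three ingredients. The projection $p \colon AN \to A$ is distance respecting by Proposition \ref{metric}, so the distance (and Hausdorff distance) between two fibers equals the distance between their base points. The map $f$ is a global isometry, so it preserves the distance between fibers: $d\bigl(f(p^{-1}(a)), f(p^{-1}(a'))\bigr) = d(p^{-1}(a), p^{-1}(a')) = d(a,a')$. Finally, by fiber respectingness of $f$ over $\tilde{\varphi}$ there is a constant $C > 0$ with $d_\mca{H}\bigl(f(p^{-1}(a)), p^{-1}(\tilde{\varphi}(a))\bigr) < C$ for all $a \in A$. A triangle inequality for set-distances, applied to the four fibers $p^{-1}(a), p^{-1}(a'), p^{-1}(\tilde{\varphi}(a)), p^{-1}(\tilde{\varphi}(a'))$, then yields
\begin{equation*}
\bigl|\,d(\tilde{\varphi}(a), \tilde{\varphi}(a')) - d(a,a')\,\bigr| < 2C \qquad \text{for all } a, a' \in A.
\end{equation*}

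For the second step I would exploit that $A$ is abelian and its left invariant metric is the image of the inner product on $\mf{a}$ under the exponential, so that $\tilde{\varphi}$ is realized by the linear map $\varphi \colon \mf{a} \to \mf{a}$ and the Riemannian distance on $A$ is homogeneous under the dilations $H \mapsto tH$. Setting $a = e^{tH}$ and $a' = e^{tH'}$ in the displayed inequality, both $d(a,a') = t|H-H'|$ and $d(\tilde{\varphi}(a),\tilde{\varphi}(a')) = t|\varphi(H-H')|$ scale linearly in $t$, so dividing by $t$ and letting $t \to \infty$ kills the constant $2C$ and forces $|\varphi(H - H')| = |H - H'|$ for all $H, H' \in \mf{a}$. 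Hence $\varphi$, and therefore $\tilde{\varphi}$, is a linear isometry.

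No serious obstacle is anticipated. The only care required is the routine triangle inequality needed in the first step to compare the infimum-distance and Hausdorff-distance between the relevant pairs of fibers; the real structural content is the scaling argument, which is precisely the mechanism by which the genuine isometry of $f$ on the ambient space forces exact metric rigidity of the induced map on the quotient $A$.
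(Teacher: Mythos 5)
Your proposal is correct and takes essentially the same approach as the paper: a triangle-inequality argument with Hausdorff distances shows $\tilde{\varphi}$ distorts distances by at most an additive constant $2C$, and then the scaling argument along rays $t \mapsto e^{tH}$ kills that constant. The paper merely specializes your first step to pairs of the form $(1,a)$ (which suffices since $\tilde{\varphi}$ is linear, so $\|\varphi H\| = \|H\|$ for all $H$ already gives the isometry); your more general estimate $|d(\tilde{\varphi}(a),\tilde{\varphi}(a')) - d(a,a')| < 2C$ is a harmless strengthening obtained by the same triangle inequality.
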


\begin{proof}
There exists a constant $C>0$ such that $d_\mca{H}\left(f(aN),\tilde{\varphi}(a)N\right)<C$ for all $a\in A$. Then we have 
\begin{align*}
\left\lvert d(1,a)-d\left(1,\tilde{\varphi}(a)\right)\right\rvert&=\left\lvert d_\mca{H}\left(f(N),f(aN)\right)-d_\mca{H}\left(N,\tilde{\varphi}(a)N\right)\right\rvert \\
&\leq\left\lvert d_\mca{H}\left(f(N),f(aN)\right)-d_\mca{H}\left(f(N),\tilde{\varphi}(a)N\right)\right\rvert \\
&\ \ \ \ +\left\lvert d_\mca{H}\left(f(N),\tilde{\varphi}(a)N\right)-d_\mca{H}\left(N,\tilde{\varphi}(a)N\right)\right\rvert \\
&\leq d_\mca{H}\left(f(aN),\tilde{\varphi}(a)N\right)+d_\mca{H}\left(f(N),N\right)\\
&<2C
\end{align*}
for all $a\in A$. Hence for all $t>0$ and $H\in\mf{a}$ we have 
\begin{equation*}
\left\lvert d\left(1,e^{tH}\right)-d\left(1,e^{t\varphi H}\right)\right\rvert<2C, 
\end{equation*}
that is, 
\begin{equation*}
\left\lvert t\lVert H\rVert-t\lVert\varphi H\rVert\right\rvert<2C. 
\end{equation*}
This implies 
\begin{equation*}
\lVert\varphi H\rVert=\lVert H\rVert. 
\end{equation*}
Thus $\tilde{\varphi}$ is an isometry. 
\end{proof}

Now we regard $f$ as $f\colon G/K\to G/K$ and $p\colon G/K\to A$. Consider 
\begin{equation*}
f_*\colon\mf{p}=T_KG/K\to\mf{p}=T_KG/K. 
\end{equation*}

\begin{lem}\label{jj}
We have $f_*(\mf{a})=\mf{a}$ and $f_*|_\mf{a}=\varphi\colon\mf{a}\to\mf{a}$. 
\end{lem}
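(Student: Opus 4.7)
My plan is to apply Busemann-function rigidity in the CAT(0) symmetric space $G/K$.

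Since $f$ is an isometry of $G/K$ fixing $K$, the differential $f_*\colon\mf{p}\to\mf{p}$ is a linear isometry and $f(e^{tX}K)=e^{tf_*(X)}K$ for all $X\in\mf{p}$ and $t\in\bb{R}$. For $H\in\mf{a}$, the point $e^{tH}K$ lies in the fiber $p^{-1}(e^{tH})=e^{tH}NK/K$, so combining the fiber-respecting condition of $f$ over $\tilde\varphi$ with the distance-respecting property of $p$ (Proposition~\ref{metric}) and the $A$-equivariance of $p$ yields a constant $C>0$ with
\begin{equation*}
d\bigl(p(e^{tf_*(H)}K),\,e^{t\varphi H}\bigr)<C\quad\text{in }A\text{ for all }t\in\bb{R}.
\end{equation*}

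By linearity of $f_*$ and $\varphi$, it suffices to prove $f_*(H)=\varphi H$ for $H$ in an open subset of $\mf{a}$; so fix such $H$ with $\varphi H$ in the open positive Weyl chamber, set $H_0=\varphi H/\|\varphi H\|$, and let $b=b_{H_0}$ be the Busemann function at the ideal endpoint of $t\mapsto e^{tH_0}K$, normalized so that $b(K)=0$. The classical Iwasawa formula
\begin{equation*}
b(gK)=-\langle H_0,\,\log p(gK)\rangle,
\end{equation*}
which follows from the identification $AN\simeq G/K$ together with the fact that $\Ad(e^{-tH_0})$ contracts $\mf{n}$ as $t\to\infty$, makes $b$ constant along each fiber of $p$; the preceding estimate thus gives
\begin{equation*}
\bigl|b(e^{tf_*(H)}K)+t\|\varphi H\|\bigr|\leq C\quad\text{for all }t\in\bb{R}.
\end{equation*}

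Since $b$ is $1$-Lipschitz on $G/K$ and the geodesic $\gamma(t)=e^{tf_*(H)}K$ has speed $\|f_*(H)\|=\|\varphi H\|$, we have $|(b\circ\gamma)'|\leq\|\varphi H\|$; combined with the linear bound above this forces $(b\circ\gamma)'\equiv-\|\varphi H\|$. A geodesic in a CAT(0) space along which a Busemann function decreases at the maximum possible rate is the unique geodesic from its basepoint toward the corresponding ideal endpoint, and the same property holds for $t\mapsto e^{t\varphi H}K$. Uniqueness therefore gives $e^{tf_*(H)}K=e^{t\varphi H}K$ for all $t$, hence $f_*(H)=\varphi H$; extending by linearity gives both $f_*(\mf{a})=\mf{a}$ and $f_*|_{\mf{a}}=\varphi$.

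The main technical ingredient is the Iwasawa formula for the Busemann function combined with the CAT(0) characterization of geodesics by maximum descent of such a function; no serious obstacle beyond careful bookkeeping arises.
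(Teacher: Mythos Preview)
Your Busemann-function approach is genuinely different from the paper's. The paper decomposes $f_*H=X+Y$ along $\mf{p}=\mf{a}\oplus\mf{n}'$, combines the fiber-respecting property with the distance-decreasing nature of $p$ and the previously established fact that $\tilde\varphi$ is an isometry to obtain $d(e^{tX},e^{t\varphi H})<C$ for all $t$, and then a triangle-inequality estimate forces $\lVert X\rVert=\lVert H\rVert$, hence $Y=0$ and $f_*H=\varphi H$. You bypass the orthogonal splitting of $f_*H$ altogether: the fiber-respecting condition is packaged into the Iwasawa formula for the Busemann function at a regular boundary point, and the conclusion comes from the CAT(0) rigidity that a geodesic along which $b_\xi$ decreases at the maximal rate must point toward $\xi$. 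Your route is more geometric and avoids any explicit analysis of $p(e^{t(X+Y)}K)$; the paper's is more elementary but leans on that projection computation.

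There is one genuine gap. From $|(b\circ\gamma)'|\leq L$ (with $L=\lVert\varphi H\rVert$) together with $|b(\gamma(t))+Lt|\leq C$ you \emph{cannot} deduce $(b\circ\gamma)'\equiv-L$: the function $h(t)=-Lt+\varepsilon\arctan t$ for small $\varepsilon>0$ is $L$-Lipschitz with $h(t)+Lt$ bounded, yet $h'\not\equiv-L$. What rescues the step is convexity: Busemann functions in a CAT(0) space are convex along geodesics, so $b\circ\gamma+Lt$ is a bounded convex function on $\bb{R}$ and therefore constant. Once you insert this line, the remaining CAT(0) argument (that $(b_\xi\circ\gamma)'\equiv-\lVert\gamma'\rVert$ forces $\gamma'=-\lVert\gamma'\rVert\,\nabla b_\xi/\lVert\nabla b_\xi\rVert$ by Cauchy--Schwarz, hence $\gamma$ is the ray to $\xi$) goes through cleanly, and linearity extends $f_*H=\varphi H$ from the open set $\varphi^{-1}(C_0)$ to all of $\mf{a}$.
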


\begin{proof}
Take any $H\in\mf{a}$. Let $f_*H=X+Y$ for some $X\in\mf{a}$ and $Y\in\mf{n}^\prime$. Since 
\begin{equation*}
\lVert H\rVert^2=\lVert f_*H\rVert^2=\lVert X\rVert^2+\lVert Y\rVert^2\geq\lVert X\rVert^2, 
\end{equation*}
we have $\lVert H\rVert\geq\lVert X\rVert$. Because $e^{tH}K$ $(t\in\bb{R})$ is a geodesic and $f$ is an isometry, $f\left(e^{tH}K\right)$ $(t\in\bb{R})$ is also a geodesic and $f\left(e^{tH}K\right)=e^{tX+tY}K$. Let $t>0$. 

Since $f$ is fiber respecting over $\tilde{\varphi}$, there exists a constant $C>0$ such that 
\begin{equation*}
d_\mca{H}\left(f\left(p^{-1}\left(e^{tH}\right)\right),p^{-1}\left(\tilde{\varphi}\left(e^{tH}\right)\right)\right)<C. 
\end{equation*}

Since $e^{tX+tY}K=f\left(e^{tH}K\right)\in f\left(p^{-1}\left(e^{tH}\right)\right)$ and by the definition of the Hausdorff distance, there exists $x\in p^{-1}\left(\tilde{\varphi}\left(e^{tH}\right)\right)$ such that $d\left(e^{tX+tY}K,x\right)<C$. 

The map $p$ is distance decreasing since $d(a,a^\prime)=d\left(p^{-1}(a),p^{-1}(a^\prime)\right)$ for all $a$, $a^\prime\in A$. So 
\begin{equation}\label{uouo}
d\left(e^{tX},\tilde{\varphi}\left(e^{tH}\right)\right)=d\left(p\left(e^{tX+tY}K\right),p(x)\right)\leq d\left(e^{tX+tY}K,x\right)<C. 
\end{equation}
Since $\tilde{\varphi}$ is an isometry, 
\begin{equation*}
d\left(\tilde{\varphi}(e^{tH}),1\right)=d\left(e^{tH},1\right)=t\lVert H\rVert. 
\end{equation*}
By the triangle inequality we have 
\begin{equation*}
t\lVert H\rVert-t\lVert X\rVert=d\left(1,\tilde{\varphi}\left(e^{tH}\right)\right)-d\left(1,e^{tX}\right)\leq d\left(e^{tX},\tilde{\varphi}\left(e^{tH}\right)\right)<C
\end{equation*}
for all $t>0$. This forces $\lVert H\rVert=\lVert X\rVert$ and then $Y=0$ by the equation $\lVert H\rVert^2=\lVert X\rVert^2+\lVert Y\rVert^2$. Hence $f_*H=X\in\mf{a}$. 

For the second assertion we have by \eqref{uouo} 
\begin{equation*}
d\left(e^{tf_*H},e^{t\varphi H}\right)<C
\end{equation*}
for any $t\in\bb{R}$. This implies $f_*H=\varphi H$. 
\end{proof}

\begin{prop}\label{helga}
Let $\mf{g}$ be a real semisimple Lie algebra and let $G=\Inn(\mf{g})$. (Recall that the Lie algebra of $G$ is naturally isomorphic to $\mf{g}$ and $G$ is the identity component of $\Aut(\mf{g})$.) Fix a maximal compact subgroup $K$ of $G$: 
\begin{enumerate}
\item Let $\psi\in\Aut(\mf{g})$ and consider $\Psi\in\Aut(G)$ defined by $\Psi(g)=\psi g\psi^{-1}$. The automorphism $\Psi$ permutes the maximal compact subgroups of $G$. Identifying the set of all maximal compact subgroups of $G$ with $G/K$ by $gKg^{-1}\leftrightarrow gK$, the map $I_\psi:G/K\to G/K$ induced by $\Psi$ is an isometry with respect to the $G$-invariant Riemannian metric defined by the restriction of the Killing form to the orthogonal complement of the Lie algebra of $K$. 
\item Suppose $\mf{g}$ has no compact simple factor. Then the mapping $\psi\mapsto I_\psi$ is an isomorphism from $\Aut(\mf{g})$ to $\Isom(G/K)$. 
\end{enumerate}
\end{prop}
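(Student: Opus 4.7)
The plan is to exploit the fact that the differential $d\Psi_1$ coincides with $\psi$ under the canonical identification $\mathrm{Lie}(G) = \ad(\mf{g}) \cong \mf{g}$, together with the observation that every $\psi \in \Aut(\mf{g})$ preserves the Killing form $B$ (immediate from $\ad(\psi X) = \psi \circ \ad X \circ \psi^{-1}$).

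For (1), I would first note that $\Psi$ is a Lie group automorphism of $G$, so it permutes the maximal compact subgroups; in particular there exists $g_0 \in G$ with $\Psi(K) = g_0 K g_0^{-1}$. Setting $\Psi' := C_{g_0^{-1}} \circ \Psi$, where $C_h$ denotes conjugation by $h$, produces an automorphism of $G$ satisfying $\Psi'(K) = K$ whose differential at $1$ is $\Ad(g_0^{-1}) \circ \psi \in \Aut(\mf{g})$. This differential preserves $B$ and $\mf{k}$, hence also preserves $\mf{p} = \mf{k}^\perp$ and restricts to an orthogonal transformation of $(\mf{p}, B|_{\mf{p}})$. The induced quotient map $\bar{\Psi}' \colon G/K \to G/K$ therefore has an orthogonal differential at the basepoint $K$; combining the $G$-equivariance $\bar{\Psi}' \circ L_g = L_{\Psi'(g)} \circ \bar{\Psi}'$ with the fact that left translations are isometries of $G/K$ promotes this to an isometry globally. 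A direct unwinding of the identification $gKg^{-1} \leftrightarrow gK$ then gives $I_\psi = L_{g_0} \circ \bar{\Psi}'$, so $I_\psi$ is a composition of isometries.

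For (2), the homomorphism property $I_{\psi_1 \psi_2} = I_{\psi_1} \circ I_{\psi_2}$ is immediate, since conjugation by $\psi_1 \psi_2$ in $\Aut(\mf{g})$ equals the composition of the conjugations by $\psi_1$ and $\psi_2$. For injectivity, if $I_\psi = \id$ then $\Psi(K) = K$, so $g_0$ may be chosen in $K$ and the differential of $I_\psi$ at $K$ equals $\psi|_{\mf{p}}$; this must therefore be the identity. Since $\psi$ respects the bracket, for any $X \in \mf{p}$ and $Y \in \mf{k}$ we have $[X, \psi Y - Y] = \psi[X, Y] - [X, Y] = 0$, using $[X, Y] \in \mf{p}$ and $\psi|_{\mf{p}} = \id$. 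Hence $\psi Y - Y$ centralizes $\mf{p}$; because $\mf{g}$ has no compact simple factor, $\mf{p}$ generates $\mf{g}$ as a Lie algebra, so $\psi Y - Y$ lies in the center of $\mf{g}$, which is trivial. Thus $\psi = \id$.

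Surjectivity will be the main obstacle. Given $F \in \Isom(G/K)$, I would pre-compose with a translation $L_{g^{-1}}$ for some $g \in G$ with $gK = F(K)$, reducing to the case $F(K) = K$. Then $A := dF_K \colon \mf{p} \to \mf{p}$ is an orthogonal transformation preserving the Riemannian curvature tensor at $K$, which under the standard identification is $R(X, Y)Z = -[[X, Y], Z]$ on $\mf{p}$. Since $\mf{g}$ has no compact factor, $[\mf{p}, \mf{p}] = \mf{k}$ and the $\mf{k}$-representation on $\mf{p}$ is faithful; these two facts allow one to extend $A$ to an endomorphism $\psi$ of $\mf{g}$ by setting $\psi([X, Y]) := [AX, AY]$ for $X, Y \in \mf{p}$, the well-definedness and the bracket-preservation being consequences of $A$ preserving the triple bracket $[[\,\cdot\,,\,\cdot\,],\,\cdot\,]$. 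This construction, which is the classical content of \'Elie Cartan's theorem on isometries of symmetric spaces (cf.\ Helgason, \emph{Differential Geometry, Lie Groups, and Symmetric Spaces}), produces $\psi \in \Aut(\mf{g})$ with $\psi(\mf{k}) = \mf{k}$. The resulting isometry $I_\psi$ then fixes $K$ and has differential $A = dF_K$ there, so $I_\psi = F$ because a connected Riemannian manifold admits at most one isometry with a prescribed $1$-jet at a single point.
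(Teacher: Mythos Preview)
Your argument is correct and is precisely the classical one due to \'E.~Cartan. The paper does not supply a proof of this proposition at all: it simply cites Exercise~7 in Chapter~VI of Helgason's \emph{Differential Geometry, Lie Groups, and Symmetric Spaces} and points to the Solutions to Exercises there, so you have in effect reconstructed the referenced proof.
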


\begin{proof}
This is Exercise 7 in Chapter VI of Helgason \cite{Hel}. A proof can be found in Solutions to Exercises. 
\end{proof}

By Proposition \ref{helga} there exists $\psi\in\Aut(\mf{g})$ such that $f=I_\psi$. Since $f(K)=K$, we have $\Psi(K)=K$. This implies 
\begin{equation}\label{form}
f\left(gK\right)=\Psi(g)K
\end{equation}
for all $g\in G$. We have $\psi(\mf{k})=\mf{k}$. Since 
\begin{equation*}
\mf{p}=\left\lbrace X\in\mf{g}\mid B(X,Y)=0 \ \text{for all} \ Y\in\mf{k}\right\rbrace
\end{equation*}
and $B$ is $\psi$-invariant, we also have $\psi(\mf{p})=\mf{p}$. Hence $f_*=\psi|_\mf{p}\colon\mf{p}\to\mf{p}$ by \eqref{form} and $\psi(\mf{a})=\mf{a}$ by Lemma \ref{jj}. Therefore, $\psi|_\mf{a}=\varphi\colon\mf{a}\to\mf{a}$ again by Lemma \ref{jj}. Since $\psi$ is an isomorphism of $\mf{g}$ which preserves $\mf{a}$, we have $\psi^{-1}\mf{g}_\lambda=\mf{g}_{\lambda\circ\psi|_\mf{a}}$ for any $\lambda\in\Sigma$. Thus $\lambda\circ\varphi=\lambda\circ\psi|_\mf{a}\in\Sigma$ if $\lambda\in\Sigma$. We must show that $\lambda\circ\varphi=\lambda\circ\psi|_\mf{a}\in\Sigma_+$ if $\lambda\in\Sigma_+$. 

For a Weyl chamber $C$ in $\mf{a}$, let 
\begin{equation*}
\Sigma_C=\lbrace\lambda\in\Sigma\mid\text{$\lambda(H)>0$ for some $H\in C$}\rbrace
\end{equation*}
be the positive system corresponding to $C$, let $\mf{n}_C=\bigoplus_{\lambda\in\Sigma_C}\mf{g}_\lambda$, and let $N_C$ be the Lie subgroup corresponding to $\mf{n}_C$. 

Let $C_0\subset\mf{a}$ be the Weyl chamber corresponding to $\Sigma_+$, ie 
\begin{equation*}
C_0=\left\lbrace H\in\mf{a}\mid\text{$\lambda(H)>0$ for all $\lambda\in\Sigma_+$}\right\rbrace. 
\end{equation*}
Then $C_1=\psi C_0$ is a Weyl chamber in $\mf{a}$. We have $\lambda\in\Sigma_+$ if and only if $\lambda\circ(\psi|_\mf{a})^{-1}\in\Sigma_{C_1}$. Thus $\psi\mf{n}=\mf{n}_{C_1}$. By \eqref{form} we have $f(NK)=N_{C_1}K$. Therefore, the Hausdorff distance between $N_{C_1}K$ and $NK$ is finite.  

\begin{lem}
If $C$ and $C^\prime$ are distinct Weyl chambers in $\mf{a}$, then the Hausdorff distance between $N_CK$ and $N_{C^\prime}K$ is infinite. 
\end{lem}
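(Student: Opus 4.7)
The plan is to separate the two horospheres using a Busemann function for a ray into $C$. Fix a regular $H_0 \in C$ with $\lVert H_0 \rVert = 1$ and define
\[
b(x) = \lim_{t \to \infty}\bigl[d(x, e^{tH_0}K) - t\bigr],
\]
which is a well-defined $1$-Lipschitz function on $G/K$ (the limit exists by convexity of $t \mapsto d(x, e^{tH_0}K) - t$ together with the lower bound $-d(x,K)$). I would show that $b \equiv 0$ on $N_C K$ and $b$ is unbounded on $N_{C'}K$. By Lipschitzness, for any $x \in N_{C'}K$, $d(x, N_C K) \geq \lvert b(x) \rvert$, which gives $d_{\mca{H}}(N_C K, N_{C'}K) = \infty$.

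The vanishing on $N_C K$ is standard: for any $n \in N_C$, since every $\alpha \in \Sigma_C$ satisfies $\alpha(H_0) > 0$, conjugation gives $e^{-tH_0} n e^{tH_0} = \exp(\Ad(e^{-tH_0})\log n) \to 1$ as $t \to \infty$. Writing $n^{-1}e^{tH_0} = e^{tH_0}(e^{-tH_0}n^{-1}e^{tH_0})$ and applying the triangle inequality, $\lvert d(K, n^{-1}e^{tH_0}K) - t \rvert \to 0$, so $b(nK) = 0$. This identifies $N_C K$ with the horosphere through $K$ centered at the ideal endpoint of the ray $e^{tH_0}K$.

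For unboundedness on $N_{C'}K$, since $\Sigma_C \neq \Sigma_{C'}$ I can pick $\alpha \in \Sigma_C$ with $-\alpha \in \Sigma_{C'}$, and nonzero $F \in \mf{g}_{-\alpha}$, so that $\exp(tF) \in N_{C'}$ for all $t$. Using the Iwasawa decomposition $G = N_C A K$ with $\mf{a}$-projection $H \colon G \to \mf{a}$, the $N_C$-invariance of $b$ (from the vanishing step) and the Euclidean identity $b(e^{H'}K) = -\langle H', H_0 \rangle$ on the flat $AK$ reduce the problem to showing that $\langle H(\exp tF), H_0 \rangle \to -\infty$. To compute this I would reduce to rank $1$: the subalgebra spanned by $\mf{g}_\alpha$, $\mf{g}_{-\alpha}$, and the coroot $H_\alpha \in \mf{a}$ forms an $\mf{sl}_2$-triple, integrating to a subgroup $L_\alpha \subset G$ locally isomorphic to $\SL(2, \bb{R})$. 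Since $\exp(\mf{g}_\alpha) \subset N_C$, $\exp(\bb{R} H_\alpha) \subset A$, and $L_\alpha \cap K \subset K$, the Iwasawa decomposition of $\exp(tF)$ inside $L_\alpha$ is simultaneously an Iwasawa decomposition in $G$ (by uniqueness). The familiar $\SL(2, \bb{R})$ computation then produces $H(\exp tF) = -\tfrac{1}{2}\log(1 + ct^2)\, H_\alpha$ for a positive constant $c$, and since $\alpha(H_0) > 0$ this yields $b(\exp(tF)K) = \tfrac{1}{2}\log(1 + ct^2)\,\langle H_\alpha, H_0 \rangle \to +\infty$.

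The step I expect to be most delicate is the compatibility between the Iwasawa decomposition of the rank-$1$ subgroup $L_\alpha$ and that of $G$: one has to verify that the $\mf{a}$-component computed inside $L_\alpha$ genuinely coincides with $H(\exp tF)$ computed in $G$, which reduces to the uniqueness of Iwasawa decomposition together with the containments above. Everything else is routine Lipschitz and triangle-inequality bookkeeping.
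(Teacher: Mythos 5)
Your proof is correct, and it reaches the same endgame as the paper (the explicit $\SL(2,\mathbb{R})$ Iwasawa decomposition of a lower-triangular unipotent, lifted to $G$ via an $\mathfrak{sl}_2$-triple attached to a root in $\Sigma_C\setminus\Sigma_{C'}$), but it gets there by a genuinely different mechanism. You introduce a Busemann function $b$ for a regular ray into $C$, show $b\equiv 0$ on $N_CK$ and $N_C$-invariance, and then use $1$-Lipschitzness to convert ``$b$ is unbounded on $N_{C'}K$'' into ``the distance to $N_CK$ is unbounded.'' The paper instead computes the distance $d(\exp(xX'_{-\lambda})K, N_CK)$ exactly: after writing out the $N_CAK$-decomposition and killing the $N_C$-factor by $G$-invariance, it uses (implicitly, via Proposition \ref{metric} applied to the distance-respecting projection $AN_C\to A$) that $d(aK,N_CK)=d(aK,K)$ for $a\in A$, which yields the closed form $\log(1+x^2)/\sqrt{B(H_\lambda,H_\lambda)}$. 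Your route buys a cleaner geometric picture (horospheres and Busemann functions, no appeal to the distance-respecting-projection machinery) at the cost of producing only a lower bound rather than an exact distance --- which is all the lemma needs. Two small points worth tightening: your parenthetical justification for existence of the Busemann limit cites ``convexity'' plus a lower bound, but convexity alone does not give convergence; what you actually need (and have, by the triangle inequality) is that $t\mapsto d(x,e^{tH_0}K)-t$ is non-increasing. And the $N_C$-invariance of $b$, which you use in the reduction to the flat, deserves a sentence --- it follows because $N_C$ fixes the ideal point $\gamma(\infty)$, so $b\circ n^{-1}$ is another Busemann function at the same point and the additive constant is pinned down by $b(n^{-1}K)=0$.
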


\begin{proof}
Take $\lambda\in\Sigma_C\setminus\Sigma_{C^\prime}$. Hence $\mf{g}_\lambda\subset\mf{n}_C$ and $\mf{g}_{-\lambda}\subset\mf{n}_{C^\prime}$. We will prove that $e^{\mf{g}_{-\lambda}}K$ contains arbitrarily far points from $N_CK$. Let $H_\lambda\in\mf{a}$ be the element defined by $\lambda(H)=B\left(H_\lambda,H\right)$ for all $H\in\mf{a}$. By Knapp \cite[Proposition 6.52]{Kn} there exists nonzero $X_\lambda\in\mf{g}_\lambda$ such that: 
\begin{itemize}
\item $\left[X_\lambda,\theta X_\lambda\right]=B\left(X_\lambda,\theta X_\lambda\right)H_\lambda$
\item $B\left(X_\lambda,\theta X_\lambda\right)=-\frac{2}{B(H_\lambda,H_\lambda)}<0$
\item the subspace $\bb{R}\theta X_\lambda\oplus\bb{R}H_\lambda\oplus\bb{R}X_\lambda$ is a Lie subalgebra of $\mf{g}$ isomorphic to $\mf{sl}(2,\bb{R})$. The isomorphism is given by 
\begin{align*}
X_{-\lambda}^\prime=\theta X_\lambda\quad&\longleftrightarrow\quad
\begin{pmatrix}
0&0\\
1&0
\end{pmatrix}\\
H_{\lambda}^\prime=\frac{2}{B(H_\lambda,H_\lambda)}H_\lambda\quad&\longleftrightarrow\quad
\begin{pmatrix}
1&0\\
0&-1
\end{pmatrix}\\
X_{\lambda}^\prime=-X_\lambda\quad&\longleftrightarrow\quad
\begin{pmatrix}
0&1\\
0&0
\end{pmatrix}
. 
\end{align*}
\end{itemize}

For any $x\in\bb{R}$ we have 
\begin{equation*}
\begin{pmatrix}
1&0\\
x&1
\end{pmatrix}
=
\begin{pmatrix}
1&\frac{x}{1+x^2}\\
0&1
\end{pmatrix}
\begin{pmatrix}
\frac{1}{\sqrt{1+x^2}}&0\\
0&\sqrt{1+x^2}
\end{pmatrix}
\begin{pmatrix}
\frac{1}{\sqrt{1+x^2}}&\frac{x}{\sqrt{1+x^2}}\\
-\frac{x}{\sqrt{1+x^2}}&\frac{1}{\sqrt{1+x^2}}
\end{pmatrix}. 
\end{equation*}
This can be regarded as an equation of elements in the universal cover $\widetilde{\SL}(2,\bb{R})$ of $\SL(2,\bb{R})$. We rewrite it using the exponential map: 
\begin{align*}
\exp
\begin{pmatrix}
0&0\\
x&0
\end{pmatrix}
&=\exp
\begin{pmatrix}
0&\frac{x}{1+x^2}\\
0&0
\end{pmatrix}
\exp
\begin{pmatrix}
-\frac{\log(1+x^2)}{2}&0\\
0&\frac{\log(1+x^2)}{2}
\end{pmatrix}
\\
&\qquad\qquad\cdot\exp
\begin{pmatrix}
0&-\arctan x\\
\arctan x&0
\end{pmatrix}. 
\end{align*}
Mapping the above equation by the homomorphism $\widetilde{\SL}(2,\bb{R})\to G$, we get 
\begin{align}\label{iwa}
\exp\left(xX_{-\lambda}^\prime\right)&=\exp\left(\frac{x}{1+x^2}X_\lambda^\prime\right)\exp\left(-\frac{\log(1+x^2)}{2}H_\lambda^\prime\right)\nonumber\\
&\qquad\qquad\cdot\exp\left(\arctan x\left(X_{-\lambda}^\prime-X_\lambda^\prime\right)\right). 
\end{align}
Note that 
\begin{gather*}
\exp\left(xX_{-\lambda}^\prime\right)\in e^{\mf{g}_{-\lambda}}\subset N_{C^\prime},\quad\exp\left(\frac{x}{1+x^2}X_\lambda^\prime\right)\in e^{\mf{g}_\lambda}\subset N_C,\\
\exp\left(-\frac{\log(1+x^2)}{2}H_\lambda^\prime\right)\in A. 
\end{gather*}
Since $\theta\left(X_{-\lambda}^\prime-X_\lambda^\prime\right)=X_{-\lambda}^\prime-X_\lambda^\prime$, we have $X_{-\lambda}^\prime-X_\lambda^\prime\in\mf{k}$, hence 
\begin{equation*}
\exp\left(\arctan x\left(X_{-\lambda}^\prime-X_\lambda^\prime\right)\right)\in K. 
\end{equation*}
Thus \eqref{iwa} gives the Iwasawa decomposition of $\exp\left(xX_{-\lambda}^\prime\right)$ as an element of $G=N_CAK$. Therefore, 
\begin{align*}
d\left(\exp\left(xX_{-\lambda}^\prime\right)K,N_CK\right)&=d\left(\exp\left(\frac{x}{1+x^2}X_\lambda^\prime\right)\exp\left(-\frac{\log(1+x^2)}{2}H_\lambda^\prime\right)K,N_CK\right)\\
&=d\left(\exp\left(-\frac{\log(1+x^2)}{2}H_\lambda^\prime\right)K,N_CK\right)\\
&=d\left(\exp\left(-\frac{\log(1+x^2)}{2}H_\lambda^\prime\right)K,K\right)\\
&=\left\lVert-\frac{\log(1+x^2)}{2}H_\lambda^\prime\right\rVert\\
&=\frac{\log(1+x^2)}{\sqrt{B(H_\lambda,H_\lambda)}}. 
\end{align*}
This shows $N_{C^\prime}K$ contains points arbitrarily far from $N_CK$. 
\end{proof}

Thus $C_1=C_0$ and so $\psi\mf{n}=\mf{n}$. Hence $\lambda\circ\varphi=\lambda\circ\psi|_\mf{a}\in\Sigma_+$ if $\lambda\in\Sigma_+$.

\subsection{The case where $G$ is of real rank $1$}
\begin{prop}\label{hyperbolic}
If 
\begin{equation*}
\begin{tikzcd}
AN\ar[r,"f"]\ar[d,"p"']&AN\ar[d,"p"]\\
A\ar[r,"h"']&A
\end{tikzcd}
\end{equation*}
is fiber respecting, $f$ is a quasiisometry and $h$ is a map, then $h$ is close to the identity map. 
\end{prop}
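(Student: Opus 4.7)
My plan is to combine the Gromov-hyperbolicity of the rank $1$ symmetric space $AN\simeq G/K$ with the quasi-isometric rigidity theorems for Heintze groups of Farb-Mosher \cite{FM} and Reiter Ahlin \cite{RA}.

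First I would set up the geometry. Since $G$ has real rank $1$, the space $\mf{a}$ is one-dimensional, $AN\simeq G/K$ is a Gromov-hyperbolic symmetric space, and under this identification the fibers $aN$ of $p$ coincide with the horospheres based at the parabolic boundary point $\xi$ stabilized by $N$. In particular, the $A$-coordinate agrees, up to sign and isometric reparametrization, with the Busemann function at $\xi$. The fiber-respecting hypothesis on $f$ then says that $f$ permutes horospheres at $\xi$ with uniformly bounded Hausdorff error, while $h$ records the induced motion of the Busemann coordinate.

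The central step is to invoke the rigidity: by \cite{FM} and \cite{RA}, every self-quasi-isometry of the Heintze group $AN$ is at bounded distance from a standard map $F\colon(n,a)\mapsto(\psi(n),a\cdot a_0)$ for some biLipschitz self-map $\psi$ of $N$ and some $a_0\in A$. The induced action on $A=AN/N$ is thus, up to bounded error, the pure translation by $a_0$ — no multiplicative distortion in the $A$-direction is possible. Since $F$ is tautologically fiber-respecting over $L_{a_0}\colon a\mapsto a\cdot a_0$ (indeed $F(p^{-1}(a))=p^{-1}(a\cdot a_0)$ exactly) and $f$ is fiber-respecting over $h$, the bound $d\bigl(f(x),F(x)\bigr)\leq D$ together with the distance-respecting property of $p$ from Proposition \ref{metric} yields $\sup_{a\in A}d_A\bigl(h(a),a\cdot a_0\bigr)<\infty$ via a Hausdorff triangle-inequality argument. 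Since left translation by the fixed element $a_0$ is at constant distance $d_A(a_0,1)$ from the identity of $A\simeq\bb{R}$, one more application of the triangle inequality gives $\sup_{a\in A}d_A\bigl(h(a),a\bigr)<\infty$, which is the required closeness to the identity.

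The main obstacle is the rigidity input from \cite{FM} and \cite{RA}: without it, the hypothesis that $f$ is a fiber-respecting quasi-isometry alone only implies that $h$ is a quasi-isometry of $\bb{R}$, which need not be close to the identity (consider $t\mapsto 2t$). The cited theorems crucially exploit the solvable Lie group structure of $AN$ and the expanding action of $A$ on $N$ to rule out multiplicative distortion in the $A$-coordinate; this is exactly the real and complex hyperbolic content of the theorem, as the remaining rank $1$ cases $\Sp(n,1)$ and $F_4^{-20}$ are already covered by the previous subsection via Pansu's theorem.
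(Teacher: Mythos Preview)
Your invocation of a black-box rigidity theorem is where the argument breaks down. The statement you attribute to \cite{FM} and \cite{RA} --- that \emph{every} self-quasi-isometry of $AN$ lies within bounded distance of a map $(n,a)\mapsto(\psi(n),a\cdot a_0)$ with $\psi$ biLipschitz --- is false for real and complex hyperbolic space. In $\mathbb{H}^2$, the inversion $z\mapsto -1/z$ is an isometry that does not fix $\infty$, hence is not close to any such standard map; more subtly, the Beurling--Ahlfors extension of a quasi-symmetric but non-biLipschitz boundary map such as $x\mapsto\mathrm{sgn}(x)\lvert x\rvert^{1/2}$ gives a quasi-isometry fixing $\infty$ whose boundary restriction is not biLipschitz, so again no standard map can approximate it. Even if you restrict to fiber-respecting quasi-isometries, the conclusion that the height map is a translation up to bounded error is not a prior theorem in \cite{FM} or \cite{RA} from which the standard-map form is deduced; it \emph{is} the content of Farb--Mosher's Proposition~5.8 and of Reiter Ahlin's Theorem~33, so you are effectively citing the proposition as its own proof. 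The paper moreover remarks that the argument in \cite{RA} appears incomplete, which is precisely why the author reproves the result rather than quoting it.

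The paper's proof is substantially more hands-on and follows the internal mechanism of \cite{FM} rather than its headline statement. One first shows that the embedding $(N,d_t)\hookrightarrow AN$ is uniformly proper with data independent of $t$, so that by a lemma of Farb--Mosher the induced horosphere maps $f_t\colon(N,d_t)\to(N,d_{h(t)})$ are quasi-isometries with uniform constants. The fiber-respecting hypothesis forces $\partial f(\infty)=\infty$, and then stability of quasi-geodesics shows each $f_t$ is uniformly close to the single boundary map $\partial f$. Comparing the exponential contraction rates of the horosphere metrics $d_t$ and $d_{h(t)}$ against the fixed map $\partial f$ yields the one-sided estimate $h(t_0)-h(t)\le t_0-t+C$ for $t\le t_0$; applying the same reasoning to a coarse inverse of $f$ gives the reverse inequality, and together these force $h$ to be bounded distance from the identity. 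None of these steps is bypassed by a single rigidity citation.
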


The map $\tilde{\varphi}$ is close to the identity map by this proposition. But since $\tilde{\varphi}$ is a homomorphism, $\tilde{\varphi}$ must be the identity map. Hence $\lambda\circ\varphi=\lambda\in\Sigma_+$ for all $\lambda\in\Sigma_+$ and this concludes the proof of Proposition \ref{qq}. 

Proposition \ref{hyperbolic} is Proposition 5.8 of Farb--Mosher \cite{FM} when $G$ is locally isomorphic to $\SO_0(n,1)$. For the other cases it is basically Theorem 33 of Reiter Ahlin \cite{RA} but the proof there seems incomplete. To get the conclusion of Proposition \ref{hyperbolic} we need to argue at some point in the same manner as Farb--Mosher do. Here we give a proof of Proposition \ref{hyperbolic} following the arguments by Farb--Mosher and Reiter Ahlin. 

We have $\Sigma_+=\lbrace\lambda\rbrace$ for $G$ locally isomorphic to $\SO_0(n,1)$ and $\Sigma_+=\left\lbrace\lambda,2\lambda\right\rbrace$ for the other cases. Accordingly $\mf{n}=\mf{g}_\lambda$ in the former case and $\mf{n}=\mf{g}_\lambda\oplus\mf{g}_{2\lambda}$ in the latter case. Take $H\in\mf{a}$ such that $\lambda(H)=1$. Hence $\mf{a}=\bb{R}H$. We identify $A$ with $\bb{R}$ by $e^{tH}\to t$. 

We write the proof for the case of $\Sigma_+=\lbrace\lambda,2\lambda\rbrace$ but no change is needed when we have $\Sigma_+=\lbrace\lambda\rbrace$ except notational one. 

Let $g_t$ be the Riemannian metric on $N$ induced from $g$ by the embedding $N\hookrightarrow AN$, $x\mapsto xe^{tH}$. Let $d$ and $d_t$ be the metrics induced from $g$ and $g_t$ respectively. Since $x\left(ye^{tH}\right)=\left(xy\right)e^{tH}$, ie the embedding $N\hookrightarrow AN$ is $N$-equivariant, $g_t$ is a left invariant Riemannian metric on $N$. Let $\lVert\cdotp\rVert_j$ be a norm on $\mf{g}_{j\lambda}$ $(j=1,2)$ and set $\lvert x\rvert=\max\left\lbrace\left\lVert \xi\right\rVert_1,\left\lVert v\right\rVert_2^\frac{1}{2}\right\rbrace$ for $x\in N$, where $\log x=\xi+v$ for $\xi\in\mf{g}_\lambda$, $v\in\mf{g}_{2\lambda}$. Let $\phi_t\colon N\to N$ be the map defined by $\phi_t(x)=e^{tH}xe^{-tH}$. Then 
\begin{align*}
\left\lvert\phi_t(x)\right\rvert&=\left\lvert e^{tH}e^{\xi+v}e^{-tH}\right\rvert=\left\lvert\exp\left(e^{t\ad H}(\xi+v)\right)\right\rvert\\
&=\left\lvert\exp\left(e^t\xi+e^{2t}v\right)\right\rvert=\max\left\lbrace e^t\left\lVert\xi\right\rVert_1,e^t\left\lVert v\right\rVert_2^\frac{1}{2}\right\rbrace\\
&=e^t\max\left\lbrace\left\lVert\xi\right\rVert_1,\left\lVert v\right\rVert_2^\frac{1}{2}\right\rbrace=e^t\left\lvert e^{\xi+v}\right\rvert\\
&=e^t\left\lvert x\right\rvert
\end{align*}
for any $x\in N$ and $t\in\bb{R}$. 

\begin{lem}\label{inequ}
There exists $K_1\geq1$ such that 
\begin{equation*}
\frac{1}{K_1}e^{-t}\left\lvert x^{-1}y\right\rvert-K_1\leq d_t(x,y)\leq K_1e^{-t}\left\lvert x^{-1}y\right\rvert+K_1
\end{equation*}
for all $t\in\bb{R}$ and $x$, $y\in N$. 
\end{lem}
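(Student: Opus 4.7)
The plan is to reduce the inequality to the case $t=0$ by a dilation argument, and then to establish a large-scale comparison between the Riemannian distance $d_0$ on $N$ and the Carnot gauge $|\cdot|$. The differential of $\phi_{-t}$ at $1$ is $e^{-t\ad H}$, which acts on $\mf{g}_{j\lambda}$ as multiplication by $e^{-jt}$. On the other hand, a direct computation using left-invariance of $g$ on $AN$ and the identity $dR_{e^{tH}} = dL_{e^{tH}}\circ\Ad(e^{-tH})$ shows that $g_t|_1$ is obtained from $g_0|_1$ by scaling $\mf{g}_{j\lambda}$ by $e^{-2jt}$, which agrees with $\phi_{-t}^*g_0$ at $1$. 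Both metrics being left-invariant on $N$, we conclude $g_t=\phi_{-t}^*g_0$ globally, so $\phi_{-t}\colon(N,g_t)\to(N,g_0)$ is an isometry and $d_t(x,y)=d_0(\phi_{-t}(x),\phi_{-t}(y))$. Combined with $|\phi_{-t}(w)|=e^{-t}|w|$ and left-invariance of $d_t$, the lemma reduces to the $t=0$ comparison
\[
\frac{1}{K}|w| - K \leq d_0(1, w) \leq K|w| + K \quad \text{for all } w \in N.
\]

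For the upper bound, $\lbrace w : |w|\leq 1\rbrace$ is compact so $d_0(1,w)$ is uniformly bounded there. For large $|w|$, decompose $\log w=\xi+v$ with $\xi\in\mf{g}_\lambda$ and $v\in\mf{g}_{2\lambda}$. Since $3\lambda\notin\Sigma$ in the rank-one cases under consideration, $[\mf{g}_\lambda,\mf{g}_{2\lambda}]=0$ and $e^{\xi+v}=e^\xi e^v$. The one-parameter subgroup gives $d_0(1,e^\xi)\lesssim\|\xi\|_1$. For $e^v$ we exploit $[\mf{g}_\lambda,\mf{g}_\lambda]=\mf{g}_{2\lambda}$ and write $v=\sum_i[\alpha_i,\beta_i]$ with $\|\alpha_i\|,\|\beta_i\|\lesssim\|v\|_2^{1/2}$; since $\mf{g}_{2\lambda}$ is central in $\mf{n}$, the Baker--Campbell--Hausdorff formula gives $e^{\alpha_i}e^{\beta_i}e^{-\alpha_i}e^{-\beta_i}=e^{[\alpha_i,\beta_i]}$ exactly, so concatenating these commutator loops along one-parameter subgroups yields a path from $1$ to $e^v$ of length $\lesssim\|v\|_2^{1/2}$. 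Combining gives $d_0(1,w)\lesssim\|\xi\|_1+\|v\|_2^{1/2}\lesssim|w|$.

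For the lower bound, the projection $\pi\colon N\to N/[N,N]\simeq\mf{g}_\lambda$ becomes a Riemannian submersion when $N/[N,N]$ is given the quotient inner product, which immediately yields $d_0(1,w)\gtrsim\|\xi\|_1$. To bound $d_0(1,w)$ from below by $\|v\|_2^{1/2}$, one invokes a Pansu-type isoperimetric inequality: for any smooth curve $\gamma\colon[0,L]\to N$ parametrized by arc length with $\gamma(0)=1$ and $\gamma(L)=w$, the $\mf{g}_{2\lambda}$-component of $\log w$ is controlled by $\|v\|_2\lesssim L^2$ through a Stokes-type identity in which the horizontal projection $\pi\gamma$ sweeps area bounded by $L^2$, with the central drift being read off from the Maurer--Cartan form of $N$. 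Putting the two bounds together gives $d_0(1,w)\gtrsim\max\lbrace\|\xi\|_1,\|v\|_2^{1/2}\rbrace=|w|$ up to an additive constant. The main substantive step is this isoperimetric lower bound on the central drift; the remainder is dilation bookkeeping and elementary path constructions.
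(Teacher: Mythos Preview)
Your proof is correct and follows the same structure as the paper's: reduce to $t=0$ via the isometry $\phi_t\colon(N,g_0)\to(N,g_t)$ (the paper derives this in one line from $e^{tH}x=\phi_t(x)e^{tH}$) and then invoke the large-scale comparison $\tfrac{1}{K}|w|-K\le d_0(1,w)\le K|w|+K$ on $N$. The only difference is that the paper simply cites Breuillard for this comparison, whereas you sketch a direct argument tailored to the step-$2$ case.
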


\begin{proof}
Since $e^{tH}x=\phi_t(x)e^{tH}$, $\phi_t\colon(N,g_0)\to(N,g_t)$ is an isometry. Hence 
\begin{equation*}
d_t(x,y)=d_t\left(1,x^{-1}y\right)=d_0\left(1,\phi_{-t}\left(x^{-1}y\right)\right). 
\end{equation*}
It is known that there exists a constant $K_1\geq1$ such that 
\begin{equation*}
\frac{1}{K_1}\left\vert x\right\rvert-K_1\leq d_0(1,x)\leq K_1\left\lvert x\right\rvert+K_1
\end{equation*}
for all $x\in N$. See for example Breuillard \cite[Proposition 4.5]{B}. Therefore, 
\begin{equation*}
d_t(x,y)\leq K_1\left\lvert\phi_{-t}\left(x^{-1}y\right)\right\rvert+K_1=K_1e^{-t}\left\lvert x^{-1}y\right\rvert+K_1
\end{equation*}
and
\begin{equation*}
\frac{1}{K_1}e^{-t}\left\lvert x^{-1}y\right\rvert-K_1=\frac{1}{K_1}\left\lvert\phi_{-t}\left(x^{-1}y\right)\right\rvert-K_1\leq d_t(x,y). 
\end{equation*}
\end{proof}

\begin{cor}\label{mimi}
There exists $K_2\geq1$ such that for any fixed $t_0\in\bb{R}$ we have 
\begin{equation*}
\frac{1}{K_2^2}e^{t_0-t}\leq\frac{d_t(x,y)}{d_{t_0}(x,y)}\leq K_2^2e^{t_0-t}
\end{equation*}
if $t\leq t_0$ and $\left\lvert x^{-1}y\right\rvert>\left(K_1^2+1\right)e^{t_0}$. 
\end{cor}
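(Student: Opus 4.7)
The plan is to combine the two-sided linear estimate of Lemma~\ref{inequ} at the two levels $t$ and $t_0$, and use the lower bound on $|x^{-1}y|$ to absorb the additive constant $K_1$ into the multiplicative main term. Write $r = |x^{-1}y|$ for brevity. Since $t \leq t_0$, the hypothesis $r > (K_1^2+1)e^{t_0}$ forces $e^{-t}r \geq e^{-t_0}r > K_1^2+1$, which is exactly the threshold at which the leading term in Lemma~\ref{inequ} dominates the additive error uniformly.

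More concretely, the lower estimate in Lemma~\ref{inequ} can be rewritten as
$$\frac{1}{K_1}e^{-t}r - K_1 = \frac{e^{-t}r}{K_1}\left(1 - \frac{K_1^2}{e^{-t}r}\right),$$
and the condition $e^{-t}r > K_1^2+1$ makes the parenthesized factor at least $1/(K_1^2+1)$, so the lower bound is at least $e^{-t}r/(K_1(K_1^2+1))$. Meanwhile the upper estimate satisfies $K_1 e^{-t}r + K_1 \leq 2K_1 e^{-t}r$ since $e^{-t}r > 1$. The same two inequalities apply verbatim at $t_0$, thanks to $e^{-t_0}r > K_1^2+1$. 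Thus both $d_t(x,y)$ and $d_{t_0}(x,y)$ are sandwiched between a fixed constant multiple and a fixed constant divisor of $e^{-t}r$ and $e^{-t_0}r$ respectively, with constants depending only on $K_1$.

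Dividing these two sandwich bounds cancels the common factor $r$ and pinches the ratio $d_t(x,y)/d_{t_0}(x,y)$ between a fixed constant multiple and a fixed constant divisor of $e^{t_0-t}$. Taking $K_2 = \sqrt{2K_1^2(K_1^2+1)}$ (which is $\geq 1$ since $K_1 \geq 1$) then yields the stated estimate. No substantive obstacle arises: the quantitative hypothesis on $|x^{-1}y|$ is calibrated precisely so that the additive errors in Lemma~\ref{inequ} become lower order uniformly in $t \leq t_0$, and the rest is bookkeeping.
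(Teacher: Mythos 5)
Your proof is correct and follows essentially the same route as the paper's: apply Lemma~\ref{inequ} at the two levels $t$ and $t_0$, use the threshold $e^{-t}\lvert x^{-1}y\rvert>K_1^2+1$ (which holds at both levels since $t\leq t_0$) to absorb the additive $\pm K_1$ into multiplicative constants, and divide the resulting sandwich bounds. The paper packages the constants symmetrically as $\frac{1}{K_2}e^{-t}\lvert x^{-1}y\rvert\leq d_t(x,y)\leq K_2 e^{-t}\lvert x^{-1}y\rvert$ with $K_2=K_1(K_1^2+1)$, whereas you use slightly asymmetric bounds and arrive at $K_2=\sqrt{2K_1^2(K_1^2+1)}$, but the argument is the same.
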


\begin{proof}
If $t\leq t_0$ and $\left\lvert x^{-1}y\right\rvert>\left(K_1^2+1\right)e^{t_0}$, then we have $e^{-t}\left\lvert x^{-1}y\right\rvert>K_1^2+1$, hence 
\begin{equation*}
\left(\frac{1}{K_1}-\frac{K_1}{K_1^2+1}\right)e^{-t}\left\lvert x^{-1}y\right\rvert\leq d_t(x,y)\leq\left(K_1+\frac{K_1}{K_1^2+1}\right)e^{-t}\left\lvert x^{-1}y\right\rvert
\end{equation*}
by Lemma \ref{inequ}. Since 
\begin{equation*}
\frac{1}{K_1}-\frac{K_1}{K_1^2+1}>0, 
\end{equation*}
there exists $K_2\geq1$, which is independent of $t_0$, such that 
\begin{equation*}
\frac{1}{K_2}e^{-t}\left\lvert x^{-1}y\right\rvert\leq d_t(x,y)\leq K_2e^{-t}\left\lvert x^{-1}y\right\rvert
\end{equation*}
under the above conditions. In particular 
\begin{equation*}
\frac{1}{K_2}e^{-t_0}\left\lvert x^{-1}y\right\rvert\leq d_{t_0}(x,y)\leq K_2e^{-t_0}\left\lvert x^{-1}y\right\rvert. 
\end{equation*}
We get the conclusion from these two inequalities. 
\end{proof}

A map $\sigma\colon S\to X$ between geodesic spaces is called {\em uniformly proper} if there exist constants $K\geq1$, $C\geq0$ and a function $\rho\colon\bb{R}_{\geq0}\to\bb{R}_{\geq0}$ with $\lim_{a\to\infty}\rho(a)=\infty$ such that 
\begin{equation*}
\rho\left(d(x,y)\right)\leq d\left(\sigma(x),\sigma(y)\right)\leq Kd(x,y)+C
\end{equation*}
for all $x$, $y\in S$. We call $\rho$, $K$ and $C$ {\em the uniformity data for $\sigma$}. 

\begin{lem}\label{up}
The embedding $\left(N,d_t\right)\hookrightarrow\left(AN,d\right)$ is uniformly proper for each $t\in\bb{R}$ and the uniformity data are independent of $t$. In fact there exists a function $\rho\colon\bb{R}_{\geq0}\to\bb{R}_{\geq0}$ with $\lim_{a\to\infty}\rho(a)=\infty$ such that 
\begin{equation*}
\rho\left(d_t(x,y)\right)\leq d\left(xe^{tH},ye^{tH}\right)\leq d_t(x,y)
\end{equation*}
for all $x$, $y\in N$ and $t\in\bb{R}$. 
\end{lem}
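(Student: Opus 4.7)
My plan is as follows. The upper bound $d(xe^{tH}, ye^{tH}) \leq d_t(x,y)$ is immediate: by definition of $g_t$, the map $\iota_t\colon N \to AN$ sending $z \mapsto ze^{tH}$ is a Riemannian embedding with $\iota_t^* g = g_t$, so any $g_t$-length curve in $N$ from $x$ to $y$ yields a $g$-length curve of the same length in $AN$ from $xe^{tH}$ to $ye^{tH}$, and passing to infima gives the bound.

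For the lower bound I would first reduce to the case $t = 0$. Writing $\iota_t = L_{e^{tH}} \circ \iota_0 \circ \phi_{-t}$, where $L_{e^{tH}}$ denotes left multiplication by $e^{tH}$ on $AN$ (a $g$-isometry by left invariance), one obtains both $g_t = \phi_{-t}^* g_0$ (hence $d_t(x,y) = d_0(\phi_{-t}(x), \phi_{-t}(y))$, as used already in the proof of Lemma \ref{inequ}) and $d(xe^{tH}, ye^{tH}) = d(\phi_{-t}(x), \phi_{-t}(y))$. Since $\phi_{-t}$ is a bijection of $N$, it therefore suffices to produce a single function $\rho\colon \bb{R}_{\geq 0} \to \bb{R}_{\geq 0}$ with $\rho(r) \to \infty$ satisfying $\rho(d_0(a,b)) \leq d(a,b)$ for all $a, b \in N$, where the right-hand $d$ is the ambient $AN$-distance; this same $\rho$ will then witness uniform properness for every $t$.

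For the case $t = 0$ I would invoke properness. The metric space $(AN, d)$ is isometric to the symmetric space $(G/K, g)$ and hence is proper: closed $d$-bounded sets are compact. Since $N$ is a closed subgroup of $AN$, for each $M > 0$ the intersection of the closed $d$-ball of radius $M$ about $1$ with $N$ is compact in $AN$, hence compact in $N$, and therefore $d_0$-bounded, say by some $R(M) < \infty$. Combining this with the left $N$-invariance of both metrics gives the implication $d(x,y) \leq M \Rightarrow d_0(x,y) \leq R(M)$ for all $x, y \in N$. Since $(N, d_0)$ is itself unbounded (as one sees immediately from Lemma \ref{inequ}), $R(M) \to \infty$ as $M \to \infty$, so setting $\rho(r) := \sup\{M \geq 0 : R(M) < r\}$ produces a nondecreasing function tending to infinity that satisfies $\rho(d_0(x,y)) \leq d(x,y)$ by construction.

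The only nontrivial point is the reduction to $t = 0$, which is essentially bookkeeping with the isometry $L_{e^{tH}}$; after that, the argument is a soft properness/compactness one and needs no sharp estimate on how quickly horospheres in $AN$ spread apart ambiently. If one preferred an explicit $\rho$, one could instead analyze an ambient geodesic from $xe^{tH}$ to $ye^{tH}$ that dips down to a minimal $A$-height $t'$ where the $g_{t'}$-distance becomes bounded, use Corollary \ref{mimi} to control the trade-off, and obtain $\rho(r) \asymp \log r$; but the properness route is both shorter and sufficient for the present purpose.
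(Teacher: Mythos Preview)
Your proof is correct and follows essentially the same route as the paper. Both arguments note that the upper bound is immediate from $\iota_t^*g=g_t$, reduce to $t=0$ via the identity $\iota_t=L_{e^{tH}}\circ\iota_0\circ\phi_{-t}$ together with left invariance of $d$, and then produce $\rho$ by a properness argument on $N\subset AN$; the paper packages this last step as $\rho=\rho_1^{-1}$ with $\rho_1(R)=\sup\{d_0(1,x):x\in N,\ d(1,x)=R\}$, while you use the equivalent closed-ball formulation $R(M)$ and take $\rho(r)=\sup\{M:R(M)<r\}$.
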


\begin{proof}
The second inequality is obvious. For the first inequality, define $\rho_1\colon\bb{R}_{\geq0}\to\bb{R}_{\geq0}$ by 
\begin{equation*}
\rho_1(R)=\sup\left\lbrace d_0(1,x)\mid x\in N,\ d(1,x)=R\right\rbrace. 
\end{equation*}
Then $\rho_1$ is strictly increasing and $\lim_{R\to\infty}\rho_1(R)=\infty$. We have $d_0(1,x)\leq\rho_1\left(d(1,x)\right)$ for any $x\in N$ hence $d_0(x,y)\leq\rho_1\left(d(x,y)\right)$ for all $x$, $y\in N$. Since 
\begin{align*}
d_t(x,y)&=d_0\left(e^{-tH}xe^{tH},e^{-tH}ye^{tH}\right)\\
&\leq\rho_1\left(d\left(e^{-tH}xe^{tH},e^{-tH}ye^{tH}\right)\right)\\
&=\rho_1\left(d\left(xe^{tH},ye^{tH}\right)\right), 
\end{align*}
we get $\rho_1^{-1}\left(d_t(x,y)\right)\leq d\left(xe^{tH},ye^{tH}\right)$. So $\rho=\rho_1^{-1}$ satisfies the required properties. 
\end{proof}

\begin{lem}\label{fmlem}
Let $X$, $Y$, $S$, $T$ be geodesic spaces, let $f\colon X\to Y$ be a quasiisometry, and let $\sigma\colon S\to X$, $\tau\colon T\to Y$ be uniformly proper maps such that $d_\mca{H}\left(f\sigma(S),\tau(T)\right)<\infty$. Take any map $g\colon S\to T$ satisfying $\sup_{x\in S}d\left(f\sigma(x),\tau g(x)\right)<\infty$. Then $g$ is a quasiisometry and the quasiisometry constants depend only on the quasiisometry constants for $f$, the uniformity data for $\sigma$ and $\tau$, and $\sup_{x\in S}d\left(f\sigma(x),\tau g(x)\right)$. 
\end{lem}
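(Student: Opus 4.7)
The plan is to verify the three defining properties of a quasiisometry for $g$: a linear upper Lipschitz-type bound, a coarse lower bound, and coarse surjectivity. Throughout, set $D = \sup_{x\in S} d(f\sigma(x), \tau g(x))$, let $(K_f, C_f)$ denote quasiisometry constants for $f$, and let $(\rho_\sigma, K_\sigma, C_\sigma)$ and $(\rho_\tau, K_\tau, C_\tau)$ be the uniformity data for $\sigma$ and $\tau$.

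For the upper bound I would chain
\[
d(\tau g(x), \tau g(y)) \leq 2D + d(f\sigma(x), f\sigma(y)) \leq 2D + K_f\bigl(K_\sigma d(x,y) + C_\sigma\bigr) + C_f,
\]
and combine with the lower inequality $\rho_\tau(d(g(x), g(y))) \leq d(\tau g(x), \tau g(y))$ in the definition of uniform properness of $\tau$. For $d(x,y) \leq 1$ the right-hand side is bounded by a constant $M_0$, and since $\rho_\tau \to \infty$, this forces $d(g(x), g(y)) \leq M_1 := \sup\{r : \rho_\tau(r) \leq M_0\} < \infty$. Using the geodesic structure of $S$ to chop a geodesic from $x$ to $y$ into $\lceil d(x,y)\rceil$ subintervals of length at most $1$ and applying the triangle inequality then upgrades this to the linear bound $d(g(x), g(y)) \leq M_1 d(x,y) + M_1$.

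For the lower bound the same chain in the reverse direction yields
\[
d(\tau g(x), \tau g(y)) \geq \tfrac{1}{K_f}\rho_\sigma(d(x,y)) - C_f - 2D,
\]
while the upper Lipschitz half $d(\tau g(x), \tau g(y)) \leq K_\tau d(g(x), g(y)) + C_\tau$ of uniform properness of $\tau$ rearranges to bound $d(g(x), g(y))$ below by a proper function of $d(x,y)$. Coarse surjectivity is immediate: given $t \in T$, by $d_{\mca{H}}(f\sigma(S), \tau(T)) < \infty$ there is $x \in S$ with $d(\tau(t), f\sigma(x))$ bounded, hence $d(\tau(t), \tau g(x))$ bounded, hence by uniform properness of $\tau$ also $d(t, g(x))$ bounded.

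The main obstacle is upgrading the proper, possibly non-linear, lower bound to the linear quasiisometric embedding estimate $(1/K')d(x,y) - C' \leq d(g(x), g(y))$ required by the definition of quasiisometry. The natural device is a symmetric construction: fix a coarse inverse $\bar f \colon Y \to X$ of $f$, observe that $\bar f \tau(T)$ is then Hausdorff-close to $\sigma(S)$, and apply the upper-bound argument above to produce an induced map $\bar g \colon T \to S$ with $\sigma \bar g$ close to $\bar f \tau$ and satisfying a linear upper Lipschitz bound. Because $\bar f \circ f$ is close to $\id_X$, the composition $\bar g \circ g$ ends up uniformly close to $\id_S$, and similarly $g \circ \bar g$ close to $\id_T$; this forces the linear lower bound on $g$ and keeps the constants dependent only on the quasiisometry constants of $f$, the uniformity data of $\sigma$ and $\tau$, and $D$.
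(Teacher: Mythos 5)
The paper does not prove this lemma itself; it simply cites Lemma~2.1 of Farb--Mosher~\cite{FM}, so there is no in-paper argument to compare against. Your proof is a correct, self-contained derivation and takes what is essentially the standard route. The three pieces all work as you describe: the upper coarse-Lipschitz bound follows from the lower half of uniform properness of $\tau$ plus unit-length subdivision of geodesics in $S$; the direct lower estimate is only proper, not linear; and the symmetrization through a coarse inverse $\bar f$ of $f$, together with the upper bound applied to an induced $\bar g$ and the fact that $\bar g\circ g$ and $g\circ\bar g$ are bounded distance from the respective identities, upgrades it to the linear lower bound. Coarse surjectivity is exactly as you say.

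One small point worth flagging. Both your coarse surjectivity step and the existence of a suitable $\bar g\colon T\to S$ (with $\sup_t d(\bar f\tau(t),\sigma\bar g(t))<\infty$) rely on the full two-sided Hausdorff distance $d_{\mca{H}}\left(f\sigma(S),\tau(T)\right)$, not merely on the one-sided quantity $D=\sup_{x\in S}d\left(f\sigma(x),\tau g(x)\right)$, which controls only the distance from $f\sigma(S)$ to $\tau(T)$. The statement as written lists the allowed data as the quasiisometry constants of $f$, the uniformity data of $\sigma$ and $\tau$, and $D$, but omits $d_{\mca{H}}\left(f\sigma(S),\tau(T)\right)$. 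This dependence is genuinely unavoidable (without it one cannot even prove coarse surjectivity), so this is an imprecision of the statement rather than a gap in your argument; in the paper's application (Proposition~\ref{hyperbolic}, via~\eqref{ooo}) the Hausdorff distance is anyway bounded uniformly in $t$, so nothing is affected downstream. Everything else in your constant-tracking is in order.
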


\begin{proof}
This is Lemma 2.1 of Farb and Mosher \cite{FM}. 
\end{proof}

We identify $h\colon A\to A$ with $h\colon\bb{R}\to\bb{R}$ by $h\left(e^{tH}\right)=e^{h(t)H}$. Define $f_t\colon\left(N,d_t\right)\to\left(N,d_{h(t)}\right)$ by $f\left(xe^{tH}\right)=f_t(x)e^{u(x,t)H}$. Then $f_t$ satisfies the property of Lemma \ref{fmlem}. In fact since $f$ is fiber respecting over $h$, there exists a constant $C_1>0$ such that $d_\mca{H}\left(f\left(p^{-1}\left(e^{tH}\right)\right),p^{-1}\left(e^{h(t)H}\right)\right)<C_1$ for all $t\in\bb{R}$. Hence there exists $y\in N$ such that $d\left(f\left(xe^{tH}\right),ye^{h(t)H}\right)<C_1$. Therefore, 
\begin{align}
d\left(f\left(xe^{tH}\right),f_t(x)e^{h(t)H}\right)&=d\left(f_t(x)e^{u(x,t)H},f_t(x)e^{h(t)H}\right)\nonumber\\
&\leq d\left(f_t(x)e^{u(x,t)H},ye^{h(t)H}\right)<C_1\label{ooo}
\end{align}
for all $x\in N$ and $t\in\bb{R}$. By Lemma \ref{up} and Lemma \ref{fmlem}, $f_t\colon\left(N,d_t\right)\to\left(N,d_{h(t)}\right)$ is a quasiisometry with quasiisometry constants independent of $t$. 

Let $\partial AN$ be the Gromov boundary of $AN$. Then $\partial AN=\lbrace\infty\rbrace\cup N$. The quasiisometry $f$ induces a map $\partial f\colon\partial AN\to\partial AN$. 

\begin{lem}\label{fix}
$\partial f(\infty)=\infty$. 
\end{lem}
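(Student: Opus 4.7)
The plan is to argue by contradiction using Busemann functions. In the rank-one case $G/K\simeq AN$ is CAT$(-1)$ and hence Gromov hyperbolic, with Gromov boundary $\partial AN=\{\infty\}\cup N$; the horospheres centered at $\infty$ are exactly the fibers $Ne^{tH}$ of $p$, and the Busemann function $b_\infty$ based at $\infty$ may be normalized so that $b_\infty(ne^{tH})=-t\|H\|$ for all $n\in N$ and $t\in\bb{R}$.

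The key observation is that the fiber respecting condition forces $b_\infty\circ f$ to have bounded oscillation on every fiber $Ne^{tH}$: for any $x\in Ne^{tH}$ one has $d(f(x),Ne^{h(t)H})<C$, and since $b_\infty$ is $1$-Lipschitz and identically $-h(t)\|H\|$ on $Ne^{h(t)H}$, we obtain $\bigl|b_\infty(f(x))+h(t)\|H\|\bigr|\leq C$, so $b_\infty\circ f$ varies by at most $2C$ on each $Ne^{tH}$. On the other hand, the stability of quasi-geodesics in proper Gromov hyperbolic spaces implies the standard fact that $f$ quasi-preserves Busemann functions: writing $b_\infty$ as a limit of normalized distance functions along any geodesic ray to $\infty$ and replacing this ray by its image under a quasi-inverse of $f$, one finds a constant $c$ and a bounded function $\varepsilon$ on $AN$ with $b_\infty(f(x))=b_\eta(x)+c+\varepsilon(x)$, where $\eta:=(\partial f)^{-1}(\infty)$.

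Suppose now for contradiction that $\partial f(\infty)\neq\infty$, so $\eta\in N$. In every rank-one symmetric space, the Busemann function $b_\eta$ based at a boundary point $\eta\in N$ is unbounded on every horosphere $Ne^{tH}$ centered at $\infty$, tending to $-\infty$ as the $N$-coordinate approaches $\eta$ and to $+\infty$ as it recedes (the model formula is $b_{\xi'}(x,y)=\log(|x-\xi'|^2/y)$ in the upper half-space description of real hyperbolic space, and the analogous formulas in the complex, quaternionic, and octonionic hyperbolic cases exhibit the same unboundedness on horospheres at $\infty$). Combined with the previous paragraph this forces $b_\infty\circ f$ to be unbounded on some $Ne^{tH}$, contradicting its bounded oscillation established above, so $\partial f(\infty)=\infty$. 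The main technical obstacle I expect is setting up the quasi-isometry invariance of Busemann functions with the correct control on additive constants; once that is in place the rest is a short direct Busemann computation in rank-one symmetric spaces.
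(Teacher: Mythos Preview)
Your Step 2 contains a genuine error: the identity $b_\infty(f(x))=b_\eta(x)+c+\varepsilon(x)$ with $\varepsilon$ bounded is \emph{not} a standard fact and is false for general quasiisometries. Already on the line $\bb{R}$ (which is $0$-hyperbolic with Busemann function $b_{+\infty}(x)=-x$), the quasiisometry $f(x)=2x$ fixes $+\infty$ and gives $b_{+\infty}(f(x))=2b_{+\infty}(x)$, so the difference is unbounded. Your sketch of the derivation --- replacing the ray by its image under a quasi-inverse --- fails precisely because distances, and hence the terms $d(f(x),\gamma(t))-t$, are distorted multiplicatively by the constant $K$, not additively. Stability of quasi-geodesics gives that $f\circ\gamma$ stays within bounded distance of a geodesic ray to $\partial f(\eta)$, but says nothing about how the \emph{parametrization} (and hence the Busemann level) is controlled beyond a bi-Lipschitz bound. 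So your contradiction between ``bounded oscillation on each $Ne^{tH}$'' and ``$b_\eta$ unbounded on each $Ne^{tH}$'' is not actually established.

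The paper's argument avoids horofunctions entirely and is much shorter. Assuming $\partial f(\infty)=x\in N$, pick $y\in N$ with $y\neq(\partial f)^{-1}(\infty)$ and look at the single vertical geodesic $\gamma(t)=ye^{tH}$ joining $y$ to $\infty$. Its image $f(\gamma)$ is a quasi-geodesic with both ideal endpoints $\partial f(y)$ and $x$ lying in $N$, so by the Morse Lemma it stays within bounded distance of a bi-infinite geodesic with both endpoints in $N$, and such a geodesic has height bounded above. But the fiber-respecting condition forces the height of $f(ye^{tH})$ to be within $C$ of $h(t)$; since $h$ is a quasiisometry of $\bb{R}$ (this follows from $p$ being distance-respecting and $f$ being a quasiisometry, as in Farb--Mosher Lemma~5.1), one can choose $t_0$ with $h(t_0)$ arbitrarily large, contradicting the height bound on $f(\gamma)$. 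If you want to salvage your approach, what you actually need (and what is attainable) is exactly this Morse-Lemma consequence along one vertical line, not any global formula relating $b_\infty\circ f$ to $b_\eta$.
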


\begin{proof}
Assume the contrary: $\partial f(\infty)=x\in N$. Take $y\in N$ with $y\neq(\partial f)^{-1}(\infty)$. Let $\gamma$ be the directed geodesic connecting $y$ and $\infty$. Then the Hausdorff distance between $f(\gamma)$ and the directed geodesic $\gamma^\prime$ connecting $\partial f(y)$ and $x$ is finite. Hence the height of $f(\gamma)$ is bounded above. Since $h$ is a quasiisometry, we can choose $t_0\in\bb{R}$ so that $h(t_0)$ is as large as we wish. Therefore, the height of $f\left(p^{-1}\left(e^{t_0H}\right)\right)$ is also large. But we always have $f\left(ye^{t_0H}\right)\in f\left(p^{-1}\left(e^{t_0H}\right)\right)\cap f(\gamma)\neq\varnothing$, which is impossible. 
\end{proof}

For any $x\in N$, $xe^{tH}$ $(t\in\bb{R})$ is a geodesic of $AN$ connecting $x\in\partial AN$ and $\infty$. Then $f\left(xe^{tH}\right)$ $(t\in\bb{R})$ is a quasigeodesic of $AN$. By Lemma \ref{fix} there exists a constant $C_2>0$ such that $d_\mca{H}\left(f\left(xe^{\bb{R}H}\right),\partial f(x)e^{\bb{R}H}\right)<C_2$. By \eqref{ooo} 
\begin{equation}
\left\lvert u(x,t)-h(t)\right\rvert\lVert H\rVert<C_1. \label{kaka}
\end{equation}
There exists $s(x,t)\in\bb{R}$ such that $d\left(f\left(xe^{tH}\right),\partial f(x)e^{s(x,t)H}\right)<C_2$. We have 
\begin{align}
\left\lvert u(x,t)-s(x,t)\right\rvert\lVert H\rVert&=d\left(\partial f(x)e^{u(x,t)H},\partial f(x)e^{s(x,t)H}\right)\nonumber\\
&=d\left(p^{-1}\left(e^{u(x,t)H}\right),\partial f(x)e^{s(x,t)H}\right)\nonumber\\
&\leq d\left(f\left(xe^{tH}\right),\partial f(x)e^{s(x,t)H}\right)<C_2. \label{lolo}
\end{align}
By \eqref{kaka} and \eqref{lolo} we get 
\begin{equation}\label{chikai}
\left\lvert s(x,t)-h(t)\right\rvert\lVert H\rVert<C_1+C_2. 
\end{equation}
Therefore, 
\begin{align*}
d&\left(f_t(x)e^{h(t)H},\partial f(x)e^{h(t)H}\right)\\
&\leq d\left(f_t(x)e^{h(t)H},f_t(x)e^{u(x,t)H}\right)+d\left(f_t(x)e^{u(x,t)H},\partial f(x)e^{s(x,t)H}\right)\\
&\qquad\qquad+d\left(\partial f(x)e^{s(x,t)H},\partial f(x)e^{h(t)H}\right)\\
&<\left\lvert u(x,t)-h(t)\right\rvert\lVert H\rVert+C_2+\left\lvert s(x,t)-h(t)\right\rvert\lVert H\rVert\\
&<2C_1+2C_2. 
\end{align*}
Hence 
\begin{equation*}
d_{h(t)}\left(f_t(x),\partial f(x)\right)\leq\rho^{-1}\left(d\left(f_t(x)e^{h(t)H},\partial f(x)e^{h(t)H}\right)\right)<\rho^{-1}\left(2C_1+2C_2\right). 
\end{equation*}
Namely $f_t$ and $\partial f$ are close and the constant of closeness is independent of $t$. Thus $\partial f\colon\left(N,d_t\right)\to\left(N,d_{h(t)}\right)$ is a quasiisometry with constants independent of $t$, so there exists a constant $K_3\geq1$ such that 
\begin{equation*}
\frac{1}{K_3}d_t(x,y)-K_3\leq d_{h(t)}\left(\partial f(x),\partial f(y)\right)\leq K_3d_t(x,y)+K_3
\end{equation*}
for all $x$, $y\in N$ and $t\in\bb{R}$. 

\begin{lem}\label{lulu}
For any fixed $t_0\in\bb{R}$ we have 
\begin{equation*}
\frac{1}{2K_3}d_t(x,y)\leq d_{h(t)}\left(\partial f(x),\partial f(y)\right)\leq2K_3d_t(x,y)
\end{equation*}
for all $t\leq t_0$ and $x$, $y\in N$ with $\left\lvert x^{-1}y\right\rvert>e^{t_0}K_1\left(2K_3^2+K_1\right)$. 
\end{lem}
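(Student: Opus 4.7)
The plan is to bootstrap the quasi-isometry inequality for $\partial f$ (established just before the lemma) into the stated bi-Lipschitz-type inequality by using the hypothesis on $|x^{-1}y|$ to force $d_t(x,y)$ to be large enough to absorb the additive slack $K_3$ into the multiplicative terms.

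First I would estimate $d_t(x,y)$ from below using Lemma \ref{inequ}: since $t\leq t_0$, we have $e^{-t}\geq e^{-t_0}$, and so
\begin{equation*}
d_t(x,y)\geq \frac{1}{K_1}e^{-t}|x^{-1}y|-K_1\geq \frac{1}{K_1}e^{-t_0}|x^{-1}y|-K_1.
\end{equation*}
Plugging in the hypothesis $|x^{-1}y|>e^{t_0}K_1(2K_3^2+K_1)$ yields $d_t(x,y)>2K_3^2$, which in particular is $\geq 1$.

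Second, I would feed this lower bound on $d_t(x,y)$ into the quasi-isometry inequality
\begin{equation*}
\frac{1}{K_3}d_t(x,y)-K_3\leq d_{h(t)}(\partial f(x),\partial f(y))\leq K_3 d_t(x,y)+K_3
\end{equation*}
recorded just before the statement. The upper bound $K_3 d_t(x,y)+K_3\leq 2K_3 d_t(x,y)$ is equivalent to $d_t(x,y)\geq 1$, while the lower bound $\frac{1}{K_3}d_t(x,y)-K_3\geq \frac{1}{2K_3}d_t(x,y)$ rearranges to $d_t(x,y)\geq 2K_3^2$. Both of these are precisely what the first step provides.

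There is no real obstacle: this is a routine calibration of constants, and the numerical value $e^{t_0}K_1(2K_3^2+K_1)$ in the hypothesis has been chosen exactly so that the two reductions above succeed simultaneously. The genuine work was done earlier in proving that $\partial f$ is a quasi-isometry between $(N,d_t)$ and $(N,d_{h(t)})$ with constants independent of $t$; here we are merely converting additive slack into multiplicative slack on the geometrically relevant scale.
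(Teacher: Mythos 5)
Your proof is correct and follows essentially the same route as the paper's: both derive $d_t(x,y)\geq 2K_3^2$ from Lemma \ref{inequ} and the hypothesis on $\left\lvert x^{-1}y\right\rvert$, then absorb the additive constant $K_3$ in the quasiisometry inequality for $\partial f$ into the multiplicative factor, using $d_t(x,y)\geq 1$ for the upper bound and $d_t(x,y)\geq 2K_3^2$ for the lower bound.
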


\begin{proof}
If $t\leq t_0$ and $\left\lvert x^{-1}y\right\rvert>e^{t_0}K_1\left(2K_3^2+K_1\right)$, we have 
\begin{equation*}
d_t(x,y)\geq\frac{1}{K_1}e^{-t}\left\lvert x^{-1}y\right\rvert-K_1\geq2K_3^2. 
\end{equation*}
Hence 
\begin{equation*}
d_{h(t)}\left(\partial f(x),\partial f(y)\right)\leq K_3d_t(x,y)+K_3\leq2K_3d_t(x,y)
\end{equation*}
and
\begin{equation*}
d_{h(t)}\left(\partial f(x),\partial f(y)\right)\geq\frac{1}{K_3}d_t(x,y)-\frac{1}{2K_3}2K_3^2\geq\frac{1}{2K_3}d_t(x,y). 
\end{equation*}
\end{proof}

It is easy to show that $h$ is a quasiisometry of $\bb{R}$. See Farb--Mosher \cite[Lemma 5.1]{FM}. 

\begin{lem}
There exists $L>0$ such that for any $t$, $t_0\in\bb{R}$ with $t+L\leq t_0$ we have $h(t)\leq h(t_0)$.
\end{lem}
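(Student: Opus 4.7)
The plan is to argue by contradiction: assume $t\le t_0$ with $h(t)>h(t_0)$ and derive an upper bound on $t_0-t$ in terms only of the universal constants $K_1,K_2,K_3$, which then yields the required value of $L$. The central idea is to estimate the ratio $d_{t_0}(x,y)/d_t(x,y)$ for a well-chosen pair $x,y\in N$ in two independent ways and match them: once directly via Corollary \ref{mimi} in the source, and once by transferring to the target via $\partial f$ using Lemma \ref{lulu} and then applying Corollary \ref{mimi} in the target.

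Concretely, write $A=d_t(x,y)$, $B=d_{t_0}(x,y)$, $A'=d_{h(t)}(\partial f(x),\partial f(y))$, and $B'=d_{h(t_0)}(\partial f(x),\partial f(y))$. Corollary \ref{mimi} applied in the source (using $t\le t_0$) places $B/A$ in $\left[\frac{1}{K_2^2}e^{-(t_0-t)},K_2^2e^{-(t_0-t)}\right]$ whenever $|x^{-1}y|>(K_1^2+1)e^{t_0}$. Lemma \ref{lulu} (valid for $|x^{-1}y|>e^{t_0}K_1(2K_3^2+K_1)$) places both $A'/A$ and $B'/B$ in $\left[\frac{1}{2K_3},2K_3\right]$. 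Corollary \ref{mimi} applied in the target, at the heights $h(t_0)<h(t)$, places $B'/A'$ in $\left[\frac{1}{K_2^2}e^{h(t)-h(t_0)},K_2^2e^{h(t)-h(t_0)}\right]$ provided $|\partial f(x)^{-1}\partial f(y)|>(K_1^2+1)e^{h(t)}$. Decomposing $B/A=(B/B')(B'/A')(A'/A)$ and multiplying the lower-bound ends of these three ranges yields $B/A\ge\frac{1}{4K_3^2K_2^2}e^{h(t)-h(t_0)}$, which combined with $B/A\le K_2^2e^{-(t_0-t)}$ from the source estimate gives $(h(t)-h(t_0))+(t_0-t)\le\log(4K_3^2K_2^4)$. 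Setting $L=\log(4K_3^2K_2^4)$, the hypothesis $t_0-t\ge L$ forces $h(t)\le h(t_0)$, contradicting the assumption.

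The main technical obstacle is arranging the last applicability threshold, $|\partial f(x)^{-1}\partial f(y)|>(K_1^2+1)e^{h(t)}$, since this condition involves the \emph{image} size rather than $|x^{-1}y|$. It is handled by chaining the lower bound of Lemma \ref{inequ}, namely $d_t(x,y)\ge\frac{1}{K_1}e^{-t}|x^{-1}y|-K_1$, through Lemma \ref{lulu} to produce a lower bound on $A'=d_{h(t)}(\partial f(x),\partial f(y))$, and then applying the upper bound half of Lemma \ref{inequ} to extract a lower bound on $|\partial f(x)^{-1}\partial f(y)|$ which is linear in $|x^{-1}y|$ once $|x^{-1}y|$ exceeds a threshold depending only on $t$ and $h(t)$. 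Since this threshold depends only on the fixed pair $(t,t_0)$ and not on $L$, we can choose $|x^{-1}y|$ as large as needed to meet all hypotheses simultaneously for each candidate pair $(t,t_0)$ with $t_0-t\ge L$.
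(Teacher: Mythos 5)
Your proof is correct but takes a genuinely different route from the paper's. At this point the paper drops the geometry of $N$ entirely and works one-dimensionally: having already established that $h$ is a quasiisometry of $\bb{R}$ with $h(t)\to\pm\infty$ as $t\to\pm\infty$, it assumes $h(t_0)\leq h(s_0)$ for some $s_0+L\leq t_0$ with $L=4K^2$ (a quasiisometry constant for $h$) and runs a creeping, step-of-length-one argument to conclude $h(t)\leq h(s_0)$ for all $t\geq t_0$, contradicting divergence. You instead re-invoke the horospherical estimates (Lemma \ref{inequ}, Corollary \ref{mimi}, Lemma \ref{lulu}) directly. The key move is applying Corollary \ref{mimi} in the image with $h(t_0)<h(t)$, which is legitimate under the contradiction hypothesis since Corollary \ref{mimi} only requires one of the two heights to dominate; multiplying the three ratio bounds then yields the additive estimate $(h(t)-h(t_0))+(t_0-t)\leq\log(4K_3^2K_2^4)$, giving an explicit $L$. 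Your handling of the applicability threshold on $\left\lvert\partial f(x)^{-1}\partial f(y)\right\rvert$ by chaining Lemma \ref{inequ} through Lemma \ref{lulu} is the right way to close the gap, and since the resulting bound is independent of how large $\left\lvert x^{-1}y\right\rvert$ is taken, $L$ is uniform. Your version is more quantitative, avoids the separate fact $h(t)\to\pm\infty$, and essentially reproduces the shape of Lemma \ref{tttt} on the opposite side of monotonicity; the paper's version is shorter and stays at the level of abstract quasiisometries of $\bb{R}$.
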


\begin{proof}
Recall that $h$ is close to $s(x,\cdotp)$ as we saw in \eqref{chikai}. By the definition of $s(x,t)$ we see $s(x,t)\to\pm\infty$ as $t\to\pm\infty$. So $h(t)\to\pm\infty$ as $t\to\pm\infty$. Let $K\geq1$ be a constant such that $\frac{1}{K}\left\lvert s-t\right\rvert-K\leq\left\lvert h(s)-h(t)\right\rvert\leq K\left\lvert s-t\right\rvert+K$ for all $s$, $t\in\bb{R}$. Take $L=4K^2$ and assume the contrary, ie there were $s_0$, $t_0\in\bb{R}$ with $s_0+L\leq t_0$ such that $h(t_0)\leq h(s_0)$. We have $\left\lvert h(s_0)-h(t_0)\right\rvert\geq3K$ and $\left\lvert h(s_0)-h(t)\right\rvert\geq3K$ for any $t\geq t_0$. For $t_0\leq t\leq t_0+1$ we have $\left\lvert h(t)-h(t_0)\right\rvert\leq2K$. Hence we must have $h(t)\leq h(s_0)$ for all $t_0\leq t\leq t_0+1$. Now we have $s_0+L\leq t_0+1$ and $h\left(t_0+1\right)\leq h(s_0)$. Hence this time we get $h(t)\leq h(s_0)$ for all $t_0+1\leq t\leq t_0+2$. By repeating we see that $h(t)\leq h(s_0)$ for all $t\geq t_0$, which is a contradiction. 
\end{proof}

\begin{lem}\label{huhu}
For any fixed $t_0\in\bb{R}$, we have 
\begin{equation*}
\frac{1}{K_2^2}e^{h(t_0)-h(t)}\leq\frac{d_{h(t)}\left(\partial f(x),\partial f(y)\right)}{d_{h(t_0)}\left(\partial f(x),\partial f(y)\right)}\leq K_2^2e^{h(t_0)-h(t)}
\end{equation*}
if $t\leq t_0-L$ and 
\begin{equation*}
\left\lvert x^{-1}y\right\rvert>K_1^2K_3e^{-h(0)}\left(\frac{1}{K_1e^{-h(0)}}\left(\frac{K_1}{K_3}+K_3+K_1\right)+\left(K_1^2+1\right)e^{h(t_0)}\right). 
\end{equation*}
\end{lem}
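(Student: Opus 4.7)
The plan is to obtain the conclusion as a direct application of Corollary \ref{mimi} to the two points $\partial f(x), \partial f(y) \in N$ at heights $h(t)$ and $h(t_0)$. To invoke Corollary \ref{mimi} one needs two things: the inequality of heights $h(t) \leq h(t_0)$, and the size condition $\lvert\partial f(x)^{-1}\partial f(y)\rvert > (K_1^2+1)e^{h(t_0)}$. The height inequality is immediate from the hypothesis $t \leq t_0 - L$ together with the preceding lemma, so the real content of the proof consists in extracting the size condition from the numerical hypothesis on $\lvert x^{-1}y\rvert$.

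To do this I would transport $\lvert x^{-1}y\rvert$ to a lower bound for $\lvert\partial f(x)^{-1}\partial f(y)\rvert$ through a short chain of three estimates, passing between the ``word'' size $\lvert\cdot\rvert$ and the Riemannian metrics $d_0$ and $d_{h(0)}$. First apply the lower half of Lemma \ref{inequ} at $t=0$ to obtain $d_0(x,y) \geq \frac{1}{K_1}\lvert x^{-1}y\rvert - K_1$. Next, apply the $(K_3,K_3)$-quasi-isometry estimate $\frac{1}{K_3}d_t(x,y) - K_3 \leq d_{h(t)}(\partial f(x),\partial f(y))$ established just before Lemma \ref{lulu}, specialised to $t=0$, to push this into a lower bound for $d_{h(0)}(\partial f(x), \partial f(y))$. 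Finally invert the upper half of Lemma \ref{inequ} at height $h(0)$ to convert $d_{h(0)}(\partial f(x), \partial f(y))$ into a lower bound for $\lvert\partial f(x)^{-1}\partial f(y)\rvert$. A direct computation shows that this lower bound has the form $\frac{e^{h(0)}}{K_1^2 K_3}\lvert x^{-1}y\rvert - \frac{e^{h(0)}}{K_1}\left(\frac{K_1}{K_3}+K_3+K_1\right)$, and the numerical hypothesis on $\lvert x^{-1}y\rvert$ is calibrated precisely so as to force this quantity to exceed $(K_1^2+1)e^{h(t_0)}$.

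Once the size condition is in hand, Corollary \ref{mimi} applied at heights $h(t) \leq h(t_0)$ to $\partial f(x), \partial f(y)$ gives the desired double inequality. There is no conceptual obstacle: the lemma is essentially a bookkeeping step that repackages Corollary \ref{mimi} in terms of the ``downstairs'' data $(x,y,t,t_0)$ rather than the ``upstairs'' data $(\partial f(x), \partial f(y), h(t), h(t_0))$, and the only care required is tracking the additive and multiplicative constants through the three-step chain above so that the peculiar expression $K_1^2K_3e^{-h(0)}\bigl(\frac{1}{K_1e^{-h(0)}}(\frac{K_1}{K_3}+K_3+K_1)+(K_1^2+1)e^{h(t_0)}\bigr)$ in the hypothesis matches exactly the threshold beyond which the size condition is guaranteed.
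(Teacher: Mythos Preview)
Your proposal is correct and follows essentially the same approach as the paper: the paper likewise verifies $h(t)\leq h(t_0)$ from the preceding lemma and then runs exactly your three-step chain (Lemma~\ref{inequ} at $t=0$, the $(K_3,K_3)$-quasi-isometry estimate at $t=0$, and the inverted upper bound of Lemma~\ref{inequ} at height $h(0)$) to obtain $\lvert\partial f(x)^{-1}\partial f(y)\rvert>(K_1^2+1)e^{h(t_0)}$, after which Corollary~\ref{mimi} gives the conclusion.
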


\begin{proof}
If $t\leq t_0-L$ and 
\begin{equation*}
\left\lvert x^{-1}y\right\rvert>K_1^2K_3e^{-h(0)}\left(\frac{1}{K_1e^{-h(0)}}\left(\frac{K_1}{K_3}+K_3+K_1\right)+\left(K_1^2+1\right)e^{h(t_0)}\right), 
\end{equation*}
then 
\begin{align*}
\left\lvert\partial f(x)^{-1}\partial f(y)\right\rvert&\geq\frac{1}{K_1e^{-h(0)}}\left(d_{h(0)}\left(\partial f(x),\partial f(y)\right)-K_1\right)\\
&\geq\frac{1}{K_1e^{-h(0)}}\left(\frac{1}{K_3}d_0(x,y)-K_3-K_1\right)\\
&\geq\frac{1}{K_1e^{-h(0)}}\left(\frac{1}{K_3}\left(\frac{1}{K_1}\left\lvert x^{-1}y\right\rvert-K_1\right)-K_3-K_1\right)\\
&=\frac{1}{K_1^2K_3e^{-h(0)}}\left\lvert x^{-1}y\right\rvert-\frac{1}{K_1e^{-h(0)}}\left(\frac{K_1}{K_3}+K_3+K_1\right)\\
&>\left(K_1^2+1\right)e^{h(t_0)}
\end{align*}
and $h(t)\leq h(t_0)$. So we get the desired inequality by Corollary \ref{mimi}. 
\end{proof}

\begin{lem}\label{tttt}
There exists $C_3>0$ such that for any $t_0\in\bb{R}$ and $t\leq t_0$, we have 
\begin{equation*}
h(t)\geq t-t_0+h(t_0)-C_3. 
\end{equation*}
\end{lem}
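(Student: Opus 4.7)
The plan is to compare two scaling rates. Corollary \ref{mimi} tells us that for $t\leq t_0$ and points $x,y\in N$ with $|x^{-1}y|$ large, the metric $d_t$ scales like $e^{t_0-t}\,d_{t_0}$ up to a multiplicative constant. Lemma \ref{huhu} gives an analogous scaling on the image side: $d_{h(t)}\bigl(\partial f(x),\partial f(y)\bigr)$ differs from $d_{h(t_0)}\bigl(\partial f(x),\partial f(y)\bigr)$ by a factor $\asymp e^{h(t_0)-h(t)}$. Finally, Lemma \ref{lulu} says that $\partial f\colon(N,d_t)\to(N,d_{h(t)})$ is a biLipschitz equivalence with constants independent of $t$ (on large-scale pairs). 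Matching the two scaling exponents will force $h(t_0)-h(t)\leq (t_0-t)+\mathrm{const}$.

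Concretely, I would split into two regimes. For $t\in[t_0-L,t_0]$ the bound follows trivially from the fact that $h$ is a quasiisometry of $\bb{R}$: if $K$ is its QI constant, then $h(t_0)-h(t)\leq KL+K$, so any $C_3\geq KL+K$ works. For $t\leq t_0-L$, I would pick $x,y\in N$ (taking $y=\exp(c\xi)$ for some nonzero $\xi\in\mf{g}_\lambda$ and large $c$, and $x=1$) with $|x^{-1}y|$ large enough that all hypotheses of Corollary \ref{mimi}, of Lemma \ref{lulu} at both levels $t$ and $t_0$, and of Lemma \ref{huhu} are simultaneously satisfied. Such a choice exists because every required lower bound on $|x^{-1}y|$ depends only on $t_0$ (and fixed constants).

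Then I would chain the following four inequalities, in which $x,y$ are fixed as above:
\begin{align*}
d_{h(t)}\bigl(\partial f(x),\partial f(y)\bigr)&\leq 2K_3\,d_t(x,y)&&\text{(Lemma \ref{lulu})}\\
&\leq 2K_3K_2^2\,e^{t_0-t}\,d_{t_0}(x,y)&&\text{(Corollary \ref{mimi})}\\
&\leq 4K_3^2K_2^2\,e^{t_0-t}\,d_{h(t_0)}\bigl(\partial f(x),\partial f(y)\bigr)&&\text{(Lemma \ref{lulu} at $t_0$)}
\end{align*}
and
\[
d_{h(t)}\bigl(\partial f(x),\partial f(y)\bigr)\geq \frac{1}{K_2^2}\,e^{h(t_0)-h(t)}\,d_{h(t_0)}\bigl(\partial f(x),\partial f(y)\bigr)\quad\text{(Lemma \ref{huhu}).}
\]
Since $d_{h(t_0)}\bigl(\partial f(x),\partial f(y)\bigr)>0$ (the argument runs over pairs with $|x^{-1}y|$ large, which forces this distance to be large via Lemma \ref{lulu}), we may cancel it to obtain $e^{h(t_0)-h(t)}\leq 4K_3^2K_2^4\,e^{t_0-t}$. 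Taking logarithms yields $h(t_0)-h(t)\leq (t_0-t)+\log(4K_3^2K_2^4)$, which is the desired inequality with $C_3:=\max\{\log(4K_3^2K_2^4),\,KL+K\}$.

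The only technical care needed is the bookkeeping of the constraints on $|x^{-1}y|$: for a given $t_0$, each of Lemmas \ref{lulu} and \ref{huhu} and Corollary \ref{mimi} imposes a lower bound on $|x^{-1}y|$ that depends on $t_0$ (through $e^{t_0}$ and $e^{h(t_0)}$), so one must verify that such $x,y$ exist in $N$. Since $N$ is a nontrivial connected Lie group, the norm $|\cdot|$ is unbounded and this is immediate. I do not foresee any deeper obstacle: once the chain of inequalities is set up, the result is a routine consequence of the scaling estimates already established.
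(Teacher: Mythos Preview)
Your argument is correct and follows essentially the same approach as the paper: both split into the regimes $t\in[t_0-L,t_0]$ (handled by the quasiisometry constant of $h$) and $t\leq t_0-L$, and in the latter chain together Corollary \ref{mimi}, Lemma \ref{lulu} (at both heights $t$ and $t_0$), and Lemma \ref{huhu} for a pair $x,y$ with $|x^{-1}y|$ large enough, arriving at the identical bound $e^{h(t_0)-h(t)}\leq 4K_2^4K_3^2 e^{t_0-t}$. The only cosmetic difference is that the paper cancels $d_{t_0}(x,y)$ from both ends of the chain while you cancel $d_{h(t_0)}(\partial f(x),\partial f(y))$; the inequalities invoked are the same.
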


\begin{proof}
Fix $t_0$ and take $x$, $y\in N$ with $\left\lvert x^{-1}y\right\rvert$ large enough so that we can apply Corollary \ref{mimi}, Lemma \ref{lulu} and Lemma \ref{huhu}. Then for any $t\leq t_0-L$, we have 
\begin{align*}
\frac{1}{2K_2^2K_3}e^{h(t_0)-h(t)}d_{t_0}(x,y)&\leq\frac{1}{K_2^2}e^{h(t_0)-h(t)}d_{h(t_0)}\left(\partial f(x),\partial f(y)\right)\\
&\leq d_{h(t)}\left(\partial f(x),\partial f(y)\right)\\
&\leq2K_3d_t(x,y)\\
&\leq2K_2^2K_3e^{t_0-t}d_{t_0}(x,y). 
\end{align*}
Hence 
\begin{equation*}
e^{h(t_0)-h(t)}\leq4K_2^4K_3^2e^{t_0-t}. 
\end{equation*}
Taking $\log$ we get 
\begin{equation*}
h(t_0)-h(t)\leq t_0-t+\log\left(4K_2^4K_3^2\right). 
\end{equation*}
Since $h$ is a quasiisometry, $h(t_0)-h(t)-t_0+t$ is bounded above for $t_0-L\leq t\leq t_0$ by a constant independent of $t_0$. Hence the claim is proved. 
\end{proof}

Let $\bar{f}\colon AN\to AN$ be a coarse inverse of $f$, ie $\bar{f}$ is a quasiisometry such that $\bar{f}\circ f$ and $f\circ\bar{f}$ are close to the identity map. Let $\bar{h}\colon\bb{R}\to\bb{R}$ be a coarse inverse of $h$. It is easy to show that $\bar{f}$ is fiber respecting over $\bar{h}$. Apply Lemma \ref{tttt} to $\bar{f}$ and $\bar{h}$ rather than $f$ and $h$. Then there exists $C_3^\prime>0$ such that 
\begin{equation*}
\bar{h}(s)\geq s-s_0+\bar{h}(s_0)-C_3^\prime
\end{equation*}
for all $s\leq s_0$. Now we can argue completely in the same way as in Farb--Mosher \cite[page 167 just after Claim 5.9]{FM} to prove that $h$ is close to the identity map.

\section{Necessary conditions for parameter rigidity}\label{necessary conditions}
From this section we consider necessary conditions for parameter rigidity. (For the definition of parameter rigidity, see the beginning of Section \ref{introd}.) These necessary conditions are given by certain vanishing of zeroth and first cohomology of the orbit foliation. The main results are Theorem \ref{h^0} and Theorem \ref{h^1}. 

Let $M\stackrel{\rho_0}{\curvearrowleft}S$ denote a $C^\infty$ locally free action of a connected simply connected solvable Lie group $S$ on a closed $C^\infty$ manifold $M$, with the orbit foliation $\mca{F}$. 

Recall that a connected simply connected solvable Lie group $S$ is called {\em of exponential type} if the exponential map $\exp\colon\mf{s}\to S$ is a diffeomorphism, or equivalently, every eigenvalue of $\ad X$ either is $0$ or has nonzero real part for each $X\in\mf{s}$. For a proof of this equivalence, see Dixmier \cite[Th\'{e}or\`{e}me 3]{D} or Saito \cite{Sa}. A derivation of a Lie algebra is called {\em an outer derivation} if it is not an inner derivation. 

The first necessary condition is the following. 

\begin{thm}[Vanishing of $H^0$]\label{h^0}
Assume that $S$ is of exponential type and there is an outer derivation of $\mf{s}$. If $M\stackrel{\rho_0}{\curvearrowleft}S$ is parameter rigid, then $M$ is connected and $H^0(\mca{F})=H^0(\mf{s})$. 
\end{thm}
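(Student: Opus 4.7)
The plan is to prove the contrapositive. I assume there is a $C^\infty$ leafwise-constant function $f\colon M\to\mathbb{R}$ that is not globally constant (this includes the case that $M$ has more than one component, by taking $f$ constant on each component; these are $C^\infty$ and leafwise-constant because leaves are connected). The aim is to construct an action $\rho\in\mca{A}(\mca{F},S)$ not parameter equivalent to $\rho_0$, contradicting rigidity. Fixing an outer derivation $D$ of $\mf{s}$ and the one-parameter subgroup $\tilde\Phi_t:=\exp(tD)\in\Aut(S)$ (well-defined by simple-connectedness of $S$), I will set
\[
\rho_\tau(x,s):=\rho_0\bigl(x,\tilde\Phi_{\tau f(x)}(s)\bigr).
\]
The leafwise-constancy $f(\rho_\tau(x,s))=f(x)$ is precisely the identity needed for $\rho_\tau$ to be a $C^\infty$ locally free action of $S$ with orbit foliation $\mca{F}$, and $\rho_\tau|_{\tau=0}=\rho_0$.

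If $\rho_0$ is parameter rigid, then for each $\tau$ there exist a leaf-preserving diffeomorphism $F_\tau$ of $M$, isotopic to $\id$ through leaf-preserving $C^\infty$ maps, and $\Phi_\tau\in\Aut(S)$, satisfying $F_\tau\rho_0(x,s)=\rho_\tau(F_\tau(x),\Phi_\tau(s))$. Simple-connectedness of $S$ together with the isotopy to the identity lets me write $F_\tau(x)=\rho_0(x,g_\tau(x))$ for a $C^\infty$ map $g_\tau\colon M\to S$. Using $f\circ F_\tau=f$ (since $F_\tau$ preserves leaves and $f$ is leaf-constant), the equivariance collapses into the functional equation
\[
g_\tau(\rho_0(x,s))=s^{-1}\,g_\tau(x)\,\Psi^x_\tau(s),\qquad \Psi^x_\tau:=\tilde\Phi_{\tau f(x)}\circ\Phi_\tau\in\Aut(S).
\]
Since $M$ is compact, $g_\tau(M)$ is compact in $S$. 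Fixing $x_0\in M$, the map $s\mapsto s^{-1}g_\tau(x_0)\Psi^{x_0}_\tau(s)=g_\tau(\rho_0(x_0,s))$ therefore has bounded image in $S$ as $s$ ranges over $S$.

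The main obstacle will be the following Lie-theoretic lemma: for $S$ simply connected solvable of exponential type, $a\in S$, and $\Psi\in\Aut(S)$, the boundedness of $\{s^{-1}a\Psi(s):s\in S\}$ forces $\Psi=\Ad(a^{-1})$. Granting this, $\Psi^x_\tau=\Ad(g_\tau(x)^{-1})$ is inner for every $x\in M$. In $\Out(S):=\Aut(S)/\Inn(S)$, whose Lie algebra is $\Out(\mf{s}):=\mathrm{Der}(\mf{s})/\ad(\mf{s})$, the condition $[\Psi^x_\tau]=1$ unwinds to
\[
\exp\bigl(\tau f(x)[D]\bigr)\,[\Phi_\tau]=1\qquad(x\in M),
\]
where $[D]\in\Out(\mf{s})$ is the outer class of $D$, nonzero by hypothesis. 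Thus $\exp(\tau f(x)[D])$ is independent of $x$; choosing $\tau>0$ so small that $\tau f(M)\cdot[D]$ lies in a neighborhood of $0\in\Out(\mf{s})$ on which $\exp$ is injective, I conclude that $f$ is constant, contradicting our choice.

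To justify the lemma, I would first reduce via $\Psi':=\Ad(a)\circ\Psi$ to showing that $\{s^{-1}\Psi'(s):s\in S\}$ bounded forces $\Psi'=\id$, then argue by induction along the derived series $\mf{s}\supset[\mf{s},\mf{s}]\supset\cdots$: project to the abelianization $\bar S=S/[S,S]$ (a vector group, as $S$ is simply-connected solvable) to see that $\bar s\mapsto(\bar\Psi'-\id)\bar s$ has bounded image on $\bar{\mf{s}}$, forcing $\bar\Psi'=\id$; then restrict $s$ to successive commutator subgroups and repeat, using exponential-type-ness of $S$ at each level to pass Lie-algebra vanishings to the group via the diffeomorphism $\exp$. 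Finiteness of the derived length of $\mf{s}$ yields $\Psi'=\id$ on all of $\mf{s}$, hence on $S$, completing the contradiction.
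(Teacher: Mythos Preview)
Your overall strategy---twist $\rho_0$ by the leafwise-constant family $x\mapsto\tilde\Phi_{\tau f(x)}$, invoke parameter rigidity, and conclude that this family is constant modulo $\Inn(S)$---is exactly the paper's (its Lemma~25 and the paragraph after it). The difference lies entirely in how you justify the core step: that boundedness of $s\mapsto s^{-1}g_\tau(x_0)\Psi^{x_0}_\tau(s)$ forces $\Psi^{x_0}_\tau$ to be inner.

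Your proof of that lemma has a genuine gap. Inducting along the derived series only shows that $\Psi'_*$ is the identity on each quotient $D^k\mf{s}/D^{k+1}\mf{s}$; this does \emph{not} force $\Psi'_*=\id$. On the Heisenberg algebra $\langle X,Y,Z\mid[X,Y]=Z\rangle$ (which is of exponential type), the automorphism $\psi(X)=X+Z$, $\psi(Y)=Y$, $\psi(Z)=Z$ is the identity on both $\mf{s}/[\mf{s},\mf{s}]$ and $[\mf{s},\mf{s}]$, yet $\psi\neq\id$. (Here $s^{-1}\Psi'(s)=\exp(xZ)$ is indeed unbounded, so the lemma itself survives; it is your argument that fails to detect it.) Restricting $s$ to commutator subgroups discards precisely the information you need, and your use of exponential type merely ``to pass Lie-algebra vanishings to the group via $\exp$'' never invokes the eigenvalue condition that exponential type actually encodes.

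A correct argument tracks how $(\Psi'_*-\id)$ maps $\mf{s}$ into the terms $\mf{n}^k$ of the lower central series of the nilradical. If $k$ is maximal with $(\Psi'_*-\id)\mf{s}\subset\mf{n}^k$, then in $S/N^{k+1}$ the map $z(s)=s^{-1}\Psi'(s)$ lands in the vector space $\mf{n}^k/\mf{n}^{k+1}$, and $z(e^{tX})$ solves the linear ODE $\dot z=-\ad(X)z+v_X$ with $v_X=(\Psi'_*-\id)X\bmod\mf{n}^{k+1}$; the exponential-type hypothesis on the eigenvalues of $\ad(X)$ is exactly what rules out bounded solutions when some $v_X\neq0$. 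The paper runs essentially this induction, but phrased in terms of canonical $1$-forms and using an averaging operator built from a bi-invariant mean on $S$ (its Sublemma inside Lemma~25) in place of your direct boundedness argument.

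One minor caution: $\Inn(S)$ need not be closed in $\Aut(S)$, so $\Out(S)$ is not literally a Lie group. Your small-$\tau$ injectivity argument still works once rephrased: since $D\notin\ad(\mf{s})$, the curve $t\mapsto\tilde\Phi_t$ is everywhere transverse to the $\Inn(S)$-coset foliation of $\Aut(S)$, hence meets each coset discretely; the paper spells this out.
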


We will prove Theorem \ref{h^0} in Section \ref{hoo}. 

\begin{cor}
Let $N\neq1$ be a connected simply connected nilpotent Lie group and let $M\stackrel{\rho_0}{\curvearrowleft}N$ be a parameter rigid action. Then $M$ is connected and $H^0(\mca{F})=H^0(\mf{n})$. 
\end{cor}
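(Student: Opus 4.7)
The plan is to deduce this directly from Theorem \ref{h^0} by verifying its two hypotheses for $S=N$. The corollary will then follow immediately.

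First I would check that $N$ is of exponential type. Since $N$ is connected, simply connected, and nilpotent, a classical theorem of Malcev ensures that $\exp\colon\mf{n}\to N$ is a diffeomorphism, which is (more than) what is needed. Equivalently, since $\ad X$ is nilpotent for every $X\in\mf{n}$, all eigenvalues of $\ad X$ are $0$, and the criterion recalled just before Theorem \ref{h^0} applies. So this hypothesis is automatic for any connected simply connected nilpotent Lie group.

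Next I would exhibit an outer derivation of $\mf{n}$. This is the only real content. If $\mf{n}$ is abelian, inner derivations all vanish, so any nonzero linear endomorphism of $\mf{n}$ is a derivation, and it is automatically outer; since $\mf{n}\neq0$ such endomorphisms exist. If $\mf{n}$ is non-abelian, I would invoke the classical theorem (due to Dixmier, and independently T\^ogo/Jacobson) that every nonzero nilpotent Lie algebra over a field of characteristic zero possesses an outer derivation. A direct argument: pick $0\neq z\in Z(\mf{n})$ and a linear functional $\varphi\colon\mf{n}/[\mf{n},\mf{n}]\to\bb{R}$, and define $D\colon\mf{n}\to\mf{n}$ by $D(X)=\varphi(X+[\mf{n},\mf{n}])\cdot z$. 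Then $D$ is a derivation because $\im D\subset Z(\mf{n})$ and $D$ kills $[\mf{n},\mf{n}]$; by a short argument comparing $\im D$ with $\im(\ad X)\subset[\mf{n},\mf{n}]$ and choosing $\varphi$, $z$ appropriately (distinguishing whether $Z(\mf{n})\subset[\mf{n},\mf{n}]$ or not, and handling the Heisenberg-type case by hand), one produces a $D$ that is not inner.

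With both hypotheses verified, Theorem \ref{h^0} yields that $M$ is connected and $H^0(\mca{F})=H^0(\mf{n})$. The only step that is not wholly routine is the existence of an outer derivation in the non-abelian nilpotent case; this is where one relies on a classical but nontrivial structural input, and it is the main (and essentially only) obstacle in the proof.
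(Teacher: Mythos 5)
Your proposal is correct and follows the same route as the paper: verify that a connected simply connected nilpotent Lie group is of exponential type (all eigenvalues of $\ad X$ vanish, so this is automatic) and then invoke the classical Jacobson--Dixmier--T\^{o}g\^{o} theorem that every nonzero nilpotent Lie algebra has an outer derivation, which is exactly the citation the paper gives. Your supplementary sketch of a direct construction of an outer derivation is only a partial outline (the case $Z(\mf{n})\subset[\mf{n},\mf{n}]$ is not fully handled), but since you explicitly fall back on the classical theorem, the proof is complete.
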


\begin{proof}
Every nonzero nilpotent Lie algebra over any field has an outer derivation. See Jacobson \cite{J}. 
\end{proof}

Note that $H^0(\mca{F})$ consists of real valued leafwise constant $C^\infty$ functions on $M$ and $H^0(\mf{s})$ (as a subspace of $H^0(\mca{F})$) consists of real valued constant functions on $M$. Hence we have $H^0(\mca{F})=H^0(\mf{s})$ if and only if leafwise constant $C^\infty$ functions are constant. This is satisfied if there is a dense leaf of $\mca{F}$. In the proof of Theorem \ref{h^0} we don't prove the existence of a dense leaf of $\mca{F}$. We prove $H^0(\mca{F})=H^0(\mf{s})$ somewhat algebraically without studying dynamical properties of the foliation $\mca{F}$. 

\begin{rem}
The author does not know whether Theorem \ref{h^0} remains true if we drop one of the two assumptions on $S$. One possibility of constructing counterexamples which are parameter rigid but $H^0(\mca{F})$ is huge is the following. Take a connected simply connected solvable Lie group $S$ and a cocompact lattice $\Gamma$ in $S$ such that: 
\begin{itemize}
\item $S$ has no outer automorphisms
\item $\Gamma$ is a rigid lattice in $S$, which means, if $\Gamma^\prime$ is a lattice in $S$ and $\alpha\colon\Gamma\to\Gamma^\prime$ is an isomorphism, then $\alpha$ extends to an automorphism of $S$. (This terminology is taken from Starkov \cite{S}.)
\end{itemize}
The author does not know the existence of such $S$ and $\Gamma$. But if we had such a pair, Proposition 6.1.2 in Maruhashi \cite{Ma2} says, the action $\Gamma\backslash S\curvearrowleft S$ defined by right multiplication is parameter rigid because in this case parameter rigidity is equivalent to the rigidity of the lattice $\Gamma$. Then the action $S^1\times\Gamma\backslash S\curvearrowleft S$ defined by $(x,y)s=(x,ys)$ is perhaps parameter rigid by the first condition, whereas $H^0(\mca{F})$ is now identified with the space of all real valued $C^\infty$ functions on $S^1$. 
\end{rem}

Recall the following theorem. 

\begin{thm}[Maruhashi \cite{Ma}]\label{nil}
Let $N$ be a connected simply connected nilpotent Lie group, and let $M\stackrel{\rho_0}{\curvearrowleft}N$ be a $C^\infty$ locally free action. Then the following are equivalent: 
\begin{itemize}
\item The action $\rho_0$ is parameter rigid and $H^0(\mca{F})=H^0(\mf{n})$. 
\item $H^1(\mca{F})=H^1(\mf{n})$. 
\end{itemize}
\end{thm}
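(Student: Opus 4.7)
The plan is to prove both implications by combining the sufficient condition in Theorem \ref{SC2} with a perturbation argument exploiting the center $Z(\mf{n})$, plus a direct analysis of how leafwise constant functions act on $H^1(\mca{F})$ by multiplication.

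For the direction $H^1(\mca{F})=H^1(\mf{n})\Rightarrow$ parameter rigidity together with $H^0(\mca{F})=H^0(\mf{n})$, parameter rigidity follows immediately from Theorem \ref{SC2} applied with the maximal choice $\mf{h}=\mf{n}=\mf{s}$: then $\mf{s}/\mf{h}=0$ and $\varphi_\rho=0$ for every $\rho$, so $\mf{s}\stackrel{\ad\circ\varphi_\rho}{\curvearrowright}\Gr(\mf{n})$ is the trivial representation and both cohomology hypotheses of Theorem \ref{SC2} reduce componentwise to the assumed $H^1(\mca{F})=H^1(\mf{n})$. For the $H^0$ equality, given $f\in H^0(\mca{F})$, multiplication by $f$ commutes with $d_\mca{F}$ (since $d_\mca{F}(fg)=f\,d_\mca{F} g$ when $d_\mca{F} f=0$), hence induces a well-defined linear map $M_f\colon H^1(\mf{n})\to H^1(\mf{n})$ via the isomorphism $H^1(\mf{n})=H^1(\mca{F})$, characterized by $f\,\omega_0^*\alpha=\omega_0^*(M_f\alpha)+d_\mca{F} h_{f,\alpha}$ for some $h_{f,\alpha}\in C^\infty(M)$. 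Evaluating this identity on the leafwise vector field $\xi_X=\omega_0^{-1}(X)$ for $X\in\mf{n}$ yields $f\,\alpha(X)-(M_f\alpha)(X)=\xi_X(h_{f,\alpha})$. Since $\xi_X f=0$, the left-hand side is $\xi_X$-invariant, hence equal to its own ergodic average along the $\xi_X$-flow, which by boundedness of $h_{f,\alpha}$ on the compact manifold $M$ equals $\lim_{T\to\infty}(h_{f,\alpha}(\xi_X^T p)-h_{f,\alpha}(p))/T=0$. Thus $f\,\alpha(X)=(M_f\alpha)(X)$ identically on $M$; choosing $\alpha\in(\mf{n}/[\mf{n},\mf{n}])^*$ and $X\in\mf{n}$ with $\alpha(X)\neq0$ (possible because $\mf{n}\neq0$ is nilpotent) shows $f$ is the constant $(M_f\alpha)(X)/\alpha(X)$. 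Applying the same reasoning to locally constant functions (i.e., smooth indicators of connected components) simultaneously forces $M$ to be connected.

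For the converse, let $\omega\in\Gamma(T^*\mca{F})$ satisfy $d_\mca{F}\omega=0$. Pick nonzero $X\in Z(\mf{n})$ and set $\omega_t=\omega_0+t\omega\cdot X$. Centrality of $X$ gives $[\omega_0,\omega\cdot X]=0$ and $[\omega\cdot X,\omega\cdot X]=0$, so $\omega_t$ satisfies the Maurer--Cartan equation for all $t$. By compactness of $M$, for $|t|$ small $\omega_t$ is pointwise nondegenerate and is therefore the canonical $1$-form of a $C^\infty$ locally free action $\rho_t$ of $N$ on $M$ with orbit foliation $\mca{F}$. Parameter rigidity of $\rho_0$ yields, for each such $t$, a leaf-preserving diffeomorphism $F_t$ and an automorphism $\Phi_t\in\Aut(N)$ with $F_t^*\omega_t=(\Phi_t)_*\omega_0$; after arranging $(F_t,\Phi_t)$ to depend smoothly on $t$, differentiation at $t=0$ gives $\omega\cdot X=\phi\circ\omega_0-d_\mca{F} h+[h,\omega_0]$, where $h=\omega_0(\dot F_0)\in C^\infty(M,\mf{n})$ and $\phi=\dot\Phi_0$ is a derivation of $\mf{n}$. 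Pairing with any $\alpha\in(\mf{n}/[\mf{n},\mf{n}])^*$ satisfying $\alpha(X)=1$, the bracket term is annihilated because $[\mf{n},\mf{n}]\subset\ker\alpha$, and $\alpha\circ\phi\in(\mf{n}/[\mf{n},\mf{n}])^*$ because a derivation takes brackets to brackets; one reads off $\omega=\omega_0^*(\alpha\circ\phi)-d_\mca{F}(\alpha\circ h)$, which proves $[\omega]\in H^1(\mf{n})$.

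The main obstacle is producing the smooth family $(F_t,\Phi_t)$: parameter rigidity alone only supplies them pointwise in $t$, and the gauge ambiguity in the choice of $F_t$ is precisely the group of leaf-preserving diffeomorphisms generated by leafwise constant cocycles. This is where the hypothesis $H^0(\mca{F})=H^0(\mf{n})$ enters crucially on the $\Rightarrow$ side: it collapses this gauge to a finite-dimensional piece, after which a standard implicit-function-theorem argument in Fr\'echet spaces (invoking Hamilton's Nash--Moser theorem if tame estimates are needed) yields the required $C^\infty$ one-parameter family. Once smoothness at $t=0$ is in hand, the algebraic extraction described above is routine.
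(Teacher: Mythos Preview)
This theorem is not proved in the present paper; it is quoted from \cite{Ma}. The closest analogue here is the proof of Theorem~\ref{h^1}, which generalizes the implication ``parameter rigid $+$ $H^0 \Rightarrow H^1$'' and exhibits the intended technique. Your forward direction is fine: Theorem~\ref{SC2} with $\mf{h}=\mf{n}$ gives parameter rigidity, and the multiplication-by-$f$ trick on $H^1$ correctly forces $f$ to be constant.

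Your converse, however, has two real gaps. First, differentiating at $t=0$ requires a \emph{smooth} family $(F_t,\Phi_t)$, whereas parameter rigidity only furnishes these for each fixed $t$; the appeal to Nash--Moser is not a proof without tame estimates, and the role you assign to $H^0(\mca{F})=H^0(\mf{n})$ in producing smoothness is not substantiated. Second, your extraction step needs $\alpha\in(\mf{n}/[\mf{n},\mf{n}])^*$ with $\alpha(X)=1$ for a \emph{central} $X$, which is impossible whenever $Z(\mf{n})\subset[\mf{n},\mf{n}]$---already for the three-dimensional Heisenberg algebra.

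The method in \cite{Ma}, visible here in the proof of Theorem~\ref{h^1} (Lemma~\ref{pp}) and in the Sublemma inside Lemma~\ref{out}, avoids both problems. One fixes a \emph{single} small $\epsilon$, writes $\omega_0+\epsilon\omega X=\Ad(P^{-1})\Phi_*\omega_0+P^*\Theta$, and inductively descends a filtration $\mf{n}=W_1\supset\cdots\supset W_s=\bb{R}X\supset0$ with $[\mf{n},W_i]\subset W_{i+1}$: at each intermediate level the relevant component of $P$ is shown to be leafwise constant (using boundedness on the compact $M$, nilpotency of $\ad$, and the averaging map $\mu$ of Lemma~\ref{contraction}) and absorbed into the automorphism side. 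Only at the bottom level does $\omega$ appear, and there one reads off directly that it is cohomologous to a leafwise constant cocycle; the hypothesis $H^0(\mca{F})=H^0(\mf{n})$ then upgrades ``leafwise constant'' to ``constant''. No smooth family is needed, and $X$ may lie in $[\mf{n},\mf{n}]$.
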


Hence we have the following. 

\begin{cor}
Let $N$ be a connected simply connected nilpotent Lie group, and let $M\stackrel{\rho_0}{\curvearrowleft}N$ be a $C^\infty$ locally free action. Then the following are equivalent: 
\begin{itemize}
\item The action $\rho_0$ is parameter rigid. 
\item $H^1(\mca{F})=H^1(\mf{n})$. 
\end{itemize}
\end{cor}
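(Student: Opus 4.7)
The plan is to derive the corollary as an immediate combination of the previous Theorem \ref{nil} of Maruhashi \cite{Ma} and the corollary of Theorem \ref{h^0} stated just above (the one that handles nilpotent $N$ using Jacobson's result that every nonzero nilpotent Lie algebra has an outer derivation).

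For the easy direction, assume $H^1(\mca{F})=H^1(\mf{n})$. Then Theorem \ref{nil} directly gives that $\rho_0$ is parameter rigid (that theorem asserts the equivalence of $H^1(\mca{F})=H^1(\mf{n})$ with the conjunction of parameter rigidity and $H^0(\mca{F})=H^0(\mf{n})$, so in particular it implies parameter rigidity).

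For the converse, assume $\rho_0$ is parameter rigid. If $N \neq 1$, the preceding corollary of Theorem \ref{h^0} applies because $\mf{n}$ is a nonzero nilpotent Lie algebra and therefore admits an outer derivation; this yields $H^0(\mca{F})=H^0(\mf{n})$ (and $M$ connected). Combining parameter rigidity with this zeroth-cohomology vanishing, Theorem \ref{nil} then gives $H^1(\mca{F})=H^1(\mf{n})$. The degenerate case $N=1$ is handled separately and trivially: the orbit foliation has zero-dimensional leaves so $T\mca{F}=0$, hence $H^1(\mca{F})=0=H^1(\mf{n})$ automatically.

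Since all the work has been done in Theorem \ref{nil} and Theorem \ref{h^0}, there is essentially no obstacle; the only minor point is observing that the hypotheses of the corollary of Theorem \ref{h^0} (namely existence of an outer derivation, together with $S$ being of exponential type, which is automatic for simply connected nilpotent groups since $\exp$ is then a diffeomorphism) are met for any connected simply connected nilpotent Lie group $N \neq 1$.
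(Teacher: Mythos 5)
Your proposal is correct and follows exactly the route the paper intends: combine the preceding corollary of Theorem \ref{h^0} (giving $H^0(\mca{F})=H^0(\mf{n})$ from parameter rigidity, via Jacobson's outer derivation result) with Theorem \ref{nil}, and handle $N=1$ as a trivial edge case. The paper's own proof is just the one-line remark ``This is true even if $N=1$,'' so you have simply made explicit what the paper signals with ``Hence we have the following.''
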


\begin{proof}
This is true even if $N=1$. 
\end{proof}

If we have vanishing of $H^0$ for the trivial coefficient, then we can deduce vanishing of $H^0$ for various nontrivial coefficients by an easy argument. This will be done in Lemma \ref{var} in Section \ref{hoo}. 

The second necessary condition is on vanishing of $H^1$. The following will be proved in Section \ref{hone}. 

\begin{thm}[Vanishing of $H^1$]\label{h^1}
Let $V\subset\mf{s}$ be an $\ad$-invariant subspace (ie an ideal of $\mf{s}$) for which $\mf{n}\stackrel{\ad}{\curvearrowright}V$ is trivial. Assume that any eigenvalue of $\ad X$ on $\mf{s}/V$ either is $0$ or has nonzero real part for any $X\in\mf{s}$. If $M\stackrel{\rho_0}{\curvearrowleft}S$ is parameter rigid, then we have 
\begin{equation*}
H^1\left(\mca{F};\mf{s}\stackrel{\ad}{\curvearrowright}V\right)=H^0(\mca{F})\otimes H^1\left(\mf{s};\mf{s}\stackrel{\ad}{\curvearrowright}V\right). 
\end{equation*}
\end{thm}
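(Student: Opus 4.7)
The plan is to interpret a cocycle $\omega \in \Gamma(T^*\mca{F} \otimes V)$ as an infinitesimal deformation of $\rho_0$, construct a genuine family of perturbed actions $\rho_f$ parametrized by leafwise constants $f \in H^0(\mca{F})$ near $0$, apply parameter rigidity to each $\rho_f$, and differentiate to extract the claimed cohomological decomposition. The shape mirrors the proof of Theorem \ref{h^0}, but is subtler because the coefficient module $V$ is nontrivial. First I would construct $\rho_f \in \mca{A}(\mca{F}, S)$ whose canonical $1$-form matches $\omega_0 + f \omega$ to first order. The first-order Maurer--Cartan equation on $\omega_0 + f\omega$ is exactly the cocycle equation $\df \omega + \ad \omega_0 \wedge \omega = 0$, which holds by hypothesis. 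The second-order obstruction $f^2 [\omega, \omega]$ takes values in $[V, V] \subset V \cap \mf{n}$ because $\mf{n}$ acts trivially on $V$; hence $V$ is $2$-step nilpotent. This structural feature, combined with the exponential-type hypothesis on $\mf{s}/V$---which eliminates global obstructions on the $\mf{s}/V$-fiber---allows the construction of $\rho_f$ either via a truncated BCH-type correction in $V$, or more directly via a $V$-valued group cocycle $a_f \colon M \times S \to V \hookrightarrow S$ over $\rho_0$.

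Once $\rho_f$ is available, parameter rigidity provides leaf-preserving $F_f \in \mathrm{Diff}(M)$, isotopic to the identity through leaf-preserving $C^\infty$ maps, and $\Phi_f \in \Aut(S)$ with $F_f^* \omega_f = d\Phi_f \circ \omega_0$. Differentiating this identity along $f_t = tf$ at $t = 0$, and using the Lie-derivative formula $L_Y \omega_0 = \df(\omega_0(Y)) + \ad \omega_0 \wedge \omega_0(Y)$ (a consequence of the Maurer--Cartan equation for $\omega_0$), yields
\[
f \omega = \omega_0^* D_f - \bigl(\df \xi_f + \ad \omega_0 \wedge \xi_f\bigr),
\]
where $D_f \in \mathrm{Der}(\mf{s})$ is the infinitesimal automorphism at $t = 0$ and $\xi_f := \omega_0(\dot F_{f_t}|_{t=0}) \in C^\infty(M, \mf{s})$. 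Projecting this identity modulo $V$ forces $[\omega_0^* \bar D_f] = 0$ in $H^1(\mca{F}; \mf{s}/V)$; by the injectivity of $H^1(\mf{s}; \mf{s}/V) \hookrightarrow H^1(\mca{F}; \mf{s}/V)$ from Lemma 2.1.3 of \cite{Ma2}, $\bar D_f$ is inner, say $\bar D_f = \overline{\ad X_0}$. Subtracting $\ad X_0$ from $D_f$ and absorbing $X_0$ into $\xi_f$ replaces $D_f$ by an element of $Z^1(\mf{s}; \mf{s} \stackrel{\ad}{\curvearrowright} V)$, so that $[f\omega] = [\omega_0^* D_f]$ in $H^1(\mca{F}; V)$. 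The exponential-type hypothesis on $\mf{s}/V$ enters here to control the residual $\mf{s}/V$-part of $\xi_f$ through the $H^0(\mca{F}; \mf{s}/V)$-freedom.

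Since $f \mapsto [\omega_0^* D_f]$ is $H^0(\mca{F})$-linear by construction, this realizes $[\omega]$ in $H^0(\mca{F}) \otimes H^1(\mf{s}; \mf{s} \stackrel{\ad}{\curvearrowright} V)$ through the natural $H^0(\mca{F})$-module structure on $H^1(\mca{F}; V)$, yielding the claimed equality. The main obstacle is constructing the genuine actions $\rho_f$ from the infinitesimal datum $f\omega$ when $V$ is non-abelian: the $2$-step nilpotency of $V$ (from the triviality of $\ad \mf{n}$ on $V$) and the exponential-type of $\mf{s}/V$ are exactly the ingredients that should overcome the obstructions, but implementing this likely requires working directly with $V$-valued group cocycles over $\rho_0$ rather than with $1$-forms, so that the BCH truncation in $V$ and the exponential structure of $\mf{s}/V$ can be leveraged simultaneously at the group level.
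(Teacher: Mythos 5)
Your approach is genuinely different from the paper's, and it has a gap that I do not see how to close. You propose to construct a whole family $\rho_f$ parametrized by $f\in H^0(\mca{F})$ near $0$, apply parameter rigidity to each $\rho_f$ to get conjugating data $(F_f,\Phi_f)$, and then \emph{differentiate} the resulting equivariance identity along $f_t=tf$ at $t=0$ to extract a derivation $D_f$ and a vector field $\xi_f$. The problem is that parameter rigidity is a pure existence statement: for each $f$ it gives \emph{some} leaf-preserving diffeomorphism $F_f$ homotopic to the identity and \emph{some} $\Phi_f\in\Aut(S)$, but no canonical choice and no control on how these depend on $f$. There is no reason $t\mapsto F_{tf}$ or $t\mapsto \Phi_{tf}$ should be differentiable, or even continuous, in $t$, so the objects $D_f:=\left.\frac{d}{dt}\Phi_{tf}\right|_{t=0}$ and $\xi_f:=\omega_0\left(\left.\frac{d}{dt}F_{tf}\right|_{t=0}\right)$ that your argument rests on are not defined. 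The subsequent claim that $f\mapsto[\omega_0^*D_f]$ is $H^0(\mca{F})$-linear ``by construction'' is likewise unsupported, since each step (choice of $F_f$, of $\Phi_f$, of the element $X_0$ killing the inner part of $\bar D_f$) involves non-canonical choices.

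A secondary, more minor issue: under the hypotheses, $V$ is in fact \emph{abelian}, not merely $2$-step nilpotent. Since $\mf{n}$ is the nilradical, $V\subset\mf{n}$; since $\ad(\mf{n})$ acts trivially on $V$ and $V\subset\mf{n}$, we get $[V,V]=0$. So $[\omega,\omega]=0$ identically and there is no second-order obstruction even for leafwise constant $f$; the construction of $\rho_f$ is therefore not where the difficulty lies, and the discussion of BCH truncations is unnecessary.

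The paper avoids your gap by never differentiating in the deformation parameter. It fixes a single sufficiently small $\epsilon>0$, observes that $\eta=\omega_0+\epsilon\omega$ satisfies the Maurer--Cartan equation (using $V$ abelian), obtains one action $\rho$ with canonical $1$-form $\eta$, and applies parameter rigidity once to get the \emph{exact} identity $\omega_0+\epsilon\omega=\Ad(P^{-1})\Phi_*\omega_0+P^*\Theta$. It then reduces $P$ to $RQ$ with $R$ leafwise constant and $Q\colon M\to N$, introduces the filtration $\mf{n}=W_1\supset\cdots\supset W_s=V\supset 0$ with $[\mf{n},W_i]\subset W_{i+1}$, and runs an induction (Lemma \ref{pp}) peeling off one graded piece of $Q$ at a time. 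At each level the eigenvalue hypothesis on $\ad X$ modulo $V$ together with boundedness (compactness of $M$, via the averaging map $\mu$ from Lemma \ref{contraction} as in Sublemma \ref{sublemma}) forces $Q_k$ to be leafwise constant. After the induction reaches the bottom layer $W_s=V$, the exact equation yields $\omega=\epsilon^{-1}(\beta_s-\alpha_s)+\nabla(\epsilon^{-1}Q_s)$ with $\epsilon^{-1}(\beta_s-\alpha_s)$ leafwise constant, which is precisely the claim. If you want to salvage your outline, you would need either to prove that the conjugating data can be chosen smoothly in $f$ (which is not a consequence of parameter rigidity as defined), or to replace the differentiation by the exact, single-$\epsilon$, filtration argument as in the paper.
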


Note that the assumption is weaker than the assumption that $S$ is of exponential type, as it allows $\ad X\colon V\to V$ to have purely imaginary nonzero eigenvalues. 

Here an element $[\omega]\in H^1\left(\mca{F};\mf{s}\stackrel{\ad}{\curvearrowright}V\right)$ is in $H^0(\mca{F})\otimes H^1\left(\mf{s};\mf{s}\stackrel{\ad}{\curvearrowright}V\right)$ if and only if $[\omega]$ is represented by {\em a leafwise constant form}, that is, represented by a form $\phi\circ\omega_0$ for some $C^\infty$ leafwise constant map $\phi\colon M\to\Hom(\mf{s},V)$. If we assume also that $\mf{s}$ has an outer derivation, then by Theorem \ref{h^0}, the conclusion simplifies to $H^1\left(\mca{F};\mf{s}\stackrel{\ad}{\curvearrowright}V\right)=H^1\left(\mf{s};\mf{s}\stackrel{\ad}{\curvearrowright}V\right)$. 

Let us consider the coefficients appearing Theorem \ref{h^1}. We have $V\subset\mf{n}$, thus $V$ is contained in the center of $\mf{n}$, and is an abelian ideal of $\mf{s}$. (For the first part, if not, take $X\in V\setminus\mf{n}$, then $\mf{n}+\bb{R}X$ would be a nilpotent ideal of $\mf{s}$ which is larger than the nilradical $\mf{n}$.) 

As an example of a coefficient $V$ satisfying the property, we can take $V=\mf{n}^s$, where $\mf{n}\supset\mf{n}^2\supset\cdots\supset\mf{n}^s\supset0$ is the lower central series of $\mf{n}$. 

As a more concrete example, we consider the $2$-dimensional solvable Lie algebra $\mf{ga}=\bb{R}X\oplus\bb{R}Y$ defined by $[X,Y]=Y$. Then the $1$-dimensional representation $\mf{ga}\stackrel{\ad}{\curvearrowright}\bb{R}Y$ satisfies the condition of Theorem \ref{h^1}, but the trivial representation $\mf{s}\curvearrowright\mf{ga}/\bb{R}Y$ does not satisfy the condition.

\section{Vanishing of $H^0$---proof of Theorem \ref{h^0}}\label{hoo}
The proof of Theorem \ref{h^0} is immediate after proving Lemma \ref{out}, whose proof is the main part of this section. Several lemmas before Lemma \ref{out} prepare an ``integration'' map $\mu$, which will be used in the proof of Lemma \ref{out}. Sublemma \ref{sublemma} inside Lemma \ref{out} is similar to Lemma \ref{pp} in the next section and the same kind of argument already appeared in Maruhashi \cite{Ma} when the vanishing of $H^1$ was proved under the assumption of parameter rigidity together with the vanishing of $H^0$ for actions of nilpotent Lie groups. 

Let $M\stackrel{\rho_0}{\curvearrowleft}S$ be a $C^\infty$ locally free action of a connected simply connected solvable Lie group $S$ on a closed $C^\infty$ manifold $M$, with the orbit foliation $\mca{F}$ and the canonical $1$-form $\omega_0$. 

Let $\mf{s}\stackrel{\pi}{\curvearrowright}V$ be a finite dimensional real representation, and let $S\stackrel{\Pi}{\curvearrowright}V$ denote the representation whose differentiation is $\pi$. Then the trivial bundle $M\times V\to M$ is an $S$-equivariant vector bundle with the action defined by 
\begin{equation*}
(x,v)s=\left(\rho_0(x,s),\Pi\left(s^{-1}\right)v\right). 
\end{equation*}
Let $\Gamma_{blc}(V)$ be the space of all bounded sections of $M\times V\to M$ which are continuous on each leaf. (An element $\xi\in\Gamma_{blc}(V)$ can be discontinuous on $M$.) We have a representation $S\curvearrowright\Gamma_{blc}(V)$ by 
\begin{equation*}
\left(s\xi\right)(x)=\Pi(s)\xi\left(\rho_0(x,s)\right)
\end{equation*}
for $s\in S$, $\xi\in\Gamma_{blc}(V)$ and $x\in M$. We equip $V$ with a norm coming from an inner product. Then $\Gamma_{blc}(V)$ is a Banach space with the supremum norm. Let $\Gamma_{lc}(V)$ be the closed subspace of $\Gamma_{blc}(V)$ which consists of bounded leafwise constant sections. 

\begin{lem}\label{contraction}
There is an $S$-equivariant continuous linear map 
\begin{equation*}
\mu\colon\Gamma_{blc}(V)\to\Gamma_{lc}(V)
\end{equation*}
which is the identity on $\Gamma_{lc}(V)$. 
\end{lem}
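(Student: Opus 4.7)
The plan is to construct $\mu$ as a leafwise averaging operator, using amenability of $S$. Since $S$ is a connected simply connected solvable Lie group, it is amenable, so $C_b(S)$ admits a two-sided invariant mean $m$ (a classical fact in the theory of amenable locally compact groups, e.g.\ Greenleaf's monograph). I will use such an $m$, extended to bounded continuous $V$-valued functions by $m(F)=\sum_i m(F_i)e_i$ with respect to a basis $\{e_i\}$ of $V$; the result is basis-independent since it is characterised by $\phi\left(m(F)\right)=m(\phi\circ F)$ for all $\phi\in V^*$, and it satisfies $\lVert m(F)\rVert\leq\lVert F\rVert_\infty$.

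For $\xi\in\Gamma_{blc}(V)$ and $x\in M$, define $F_{\xi,x}\colon S\to V$ by $F_{\xi,x}(s)=\xi\left(\rho_0(x,s)\right)$. The map $s\mapsto\rho_0(x,s)$ is smooth with image in the leaf $L_x$ through $x$, and $\xi|_{L_x}$ is bounded and continuous, so $F_{\xi,x}\in C_b(S;V)$ with $\lVert F_{\xi,x}\rVert_\infty\leq\lVert\xi\rVert_\infty$. Set
\begin{equation*}
\mu(\xi)(x)=m\left(F_{\xi,x}\right).
\end{equation*}
Linearity of $\mu$ is clear from linearity of $m$, and continuity follows from the bound $\lVert\mu(\xi)\rVert_\infty\leq\lVert\xi\rVert_\infty$.

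The three essential properties are then verified as follows. For leafwise constancy of $\mu(\xi)$, write $y=\rho_0(x,t)$; then $F_{\xi,y}(s)=\xi\left(\rho_0(x,ts)\right)$, so $F_{\xi,y}$ is the left translate of $F_{\xi,x}$ by $t$, and left-invariance of $m$ gives $\mu(\xi)(y)=\mu(\xi)(x)$, placing $\mu(\xi)$ in $\Gamma_{lc}(V)$. The identity property on $\Gamma_{lc}(V)$ is immediate: if $\xi$ is leafwise constant then $F_{\xi,x}$ is the constant function with value $\xi(x)$, and $m$ sends a constant vector to itself. For $S$-equivariance, compute
\begin{equation*}
\mu(s\xi)(x)=m\!\left(r\mapsto\Pi(s)\xi\left(\rho_0(x,rs)\right)\right)=\Pi(s)\,m\!\left(r\mapsto\xi\left(\rho_0(x,rs)\right)\right)=\Pi(s)\,\mu(\xi)(x),
\end{equation*}
where the middle equality uses right-invariance of $m$ applied to $F_{\xi,x}$, and the last step uses leafwise constancy of $\mu(\xi)$ to rewrite $\Pi(s)\mu(\xi)(x)=\Pi(s)\mu(\xi)\left(\rho_0(x,s)\right)=\left(s\mu(\xi)\right)(x)$.

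The main obstacle is the invocation of a \emph{two-sided} invariant mean: the left-invariance is needed to make $\mu(\xi)$ leafwise constant, while right-invariance is needed for $S$-equivariance. A one-sided mean would yield one of the two properties but not both, so the proof essentially rests on the existence of a bi-invariant mean on $C_b(S)$ for amenable $S$; once this is in hand, the rest of the argument is a routine verification of formal properties and does not require any specific feature of the action $\rho_0$ beyond local freeness and smoothness.
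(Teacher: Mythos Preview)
Your proof is correct and is essentially the same as the paper's: both use a bi-invariant mean on $C_b(S)$ (via amenability of the solvable group $S$), extend it componentwise to $V$-valued functions, and apply it to the orbit functions $s\mapsto\xi(\rho_0(x,s))$; left-invariance gives leafwise constancy and right-invariance gives $S$-equivariance. The only difference is cosmetic---you obtain the sharper bound $\lVert\mu(\xi)\rVert_\infty\le\lVert\xi\rVert_\infty$ via duality, whereas the paper uses an orthonormal basis to get $\lVert\mu(\xi)\rVert_\infty\le\sqrt{n}\,\lVert\xi\rVert_\infty$, but either bound suffices for continuity.
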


\begin{proof}
Since $S$ is amenable, by one of the characterizations of amenability, we have a bi-invariant mean $\mu_0\colon C_b(S)\to\bb{R}$ on the space $C_b(S)$ of all bounded continuous real valued functions on $S$. See Page 26--29 of Greenleaf \cite{G}. Recall that $\mu_0(1)=1$ and its operator norm is $1$. Take a basis $v_1,\ldots,v_n$ of $V$. For $\xi=\sum_{i=1}^nf_iv_i\in\Gamma_{blc}(V)$ and $x\in M$, we define 
\begin{equation*}
\mu(\xi)(x)=\sum_{i=1}^n\mu_0\left(f_i\left(\rho_0(x,\cdotp)\right)\right)v_i. 
\end{equation*}
Then this is independent of a choice of a basis of $V$. We have 
\begin{equation*}
\mu(\xi)\left(\rho_0(x,s)\right)=\sum_{i=1}^n\mu_0\left(f_i\left(\rho_0(x,s\ \cdot\ )\right)\right)v_i=\mu(\xi)(x)
\end{equation*}
by left invariance, and
\begin{equation*}
\mu(s\xi)(x)=\Pi(s)\sum_{i=1}^n\mu_0\left(f_i\left(\rho_0(x,\cdot\ s)\right)\right)v_i=s\mu(\xi)(x)
\end{equation*}
by right invariance. We also have $\mu(\xi)=\xi$ for $\xi\in\Gamma_{lc}(V)$ since $\mu_0(1)=1$. By taking $v_1,\ldots,v_n$ to be an orthonormal basis and using $\left\Vert\mu_0\right\Vert=1$, we see 
\begin{equation*}
\left\lVert\mu(\xi)(x)\right\rVert^2\leq\sum_{i=1}^n\left\lVert f_i\right\rVert_\infty^2\leq n\left\lVert\xi\right\rVert_\infty^2. 
\end{equation*}
\end{proof}

Let $\nabla$ denote the flat leafwise connection of $M\times V\to M$ defined by $\mf{s}\stackrel{\pi}{\curvearrowright}V$. 

\begin{lem}
For $v\in V$, $x_0\in M$ and sufficiently small $s\in S$, the locally defined section 
\begin{equation*}
\xi_0\left(\rho_0(x_0,s)\right)=\left(\rho_0(x_0,s),\Pi\left(s^{-1}\right)v\right)
\end{equation*}
of $M\times V\to M$ on the leaf containing $x_0$, is a parallel section for $\nabla$, that is, $\nabla\xi_0=0$. 
\end{lem}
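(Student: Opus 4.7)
The plan is a direct computation using the definition of $\nabla$ and the fact that $\omega_0$ is the canonical $1$-form of $\rho_0$. Since the vector fields $X_Y$ coming from elements $Y \in \mathfrak{s}$ via $X_Y(x) = \frac{d}{dt}\big|_{t=0}\rho_0(x, e^{tY})$ span $T\mathcal{F}$ at every point, it suffices to check $\nabla_{X_Y}\xi_0 = 0$ for every $Y \in \mathfrak{s}$. Recall that by definition $\omega_0(X_Y) = Y$ and that, viewing sections of the trivial bundle as $V$-valued functions, $\nabla_{X_Y}\xi_0 = X_Y\xi_0 + \pi(\omega_0(X_Y))\xi_0 = X_Y\xi_0 + \pi(Y)\xi_0$.

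Next I would compute $X_Y\xi_0$ at the point $x = \rho_0(x_0, s)$. Using the $S$-action property $\rho_0(\rho_0(x_0,s), e^{tY}) = \rho_0(x_0, se^{tY})$ and the formula defining $\xi_0$, one gets
\begin{equation*}
(X_Y\xi_0)(x) = \frac{d}{dt}\bigg|_{t=0}\xi_0\bigl(\rho_0(x_0, se^{tY})\bigr) = \frac{d}{dt}\bigg|_{t=0}\Pi\bigl((se^{tY})^{-1}\bigr)v = \frac{d}{dt}\bigg|_{t=0}\Pi(e^{-tY})\Pi(s^{-1})v = -\pi(Y)\,\xi_0(x).
\end{equation*}
Combining with the connection term gives $\nabla_{X_Y}\xi_0(x) = -\pi(Y)\xi_0(x) + \pi(Y)\xi_0(x) = 0$, as required.

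The only subtle point is to note that $\xi_0$ is well-defined on a neighbourhood of $x_0$ in its leaf, since local freeness guarantees that $s \mapsto \rho_0(x_0, s)$ is a local diffeomorphism from a neighbourhood of the identity in $S$ onto a neighbourhood of $x_0$ in the leaf; hence the formula $\xi_0(\rho_0(x_0, s)) = \Pi(s^{-1})v$ determines $\xi_0$ unambiguously there. No genuine obstacle arises: the calculation is a one-line verification once the definitions are unpacked, and the lemma is essentially the observation that $s \mapsto \Pi(s^{-1})v$ defines a parallel transport along orbits for the flat partial connection induced by $\pi$.
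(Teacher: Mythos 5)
Your proof is correct and follows the same approach as the paper: both compute $\nabla_{X_Y}\xi_0$ directly at a point $\rho_0(x_0,s)$ by expanding the definition $\nabla = \df + \pi\omega_0$, using $\omega_0(X_Y)=Y$ and the group law $\rho_0(\rho_0(x_0,s),e^{tY})=\rho_0(x_0,se^{tY})$ to differentiate $\Pi((se^{tY})^{-1})v$, and observe the two terms cancel.
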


\begin{proof}
For any $y=\rho_0(x_0,s_0)$ with small $s_0\in S$ and any $X\in\mf{s}$, we have 
\begin{align*}
\nabla_{\left.\frac{d}{dt}\rho_0\left(y,e^{tX}\right)\right\vert_{t=0}}\xi_0&=\df\xi_0\left(\left.\frac{d}{dt}\rho_0\left(y,e^{tX}\right)\right\vert_{t=0}\right)+\pi(X)\xi_0(y) \\
&=\left.\frac{d}{dt}\Pi\left(e^{-tX}s_0^{-1}\right)v\right\vert_{t=0}+\pi(X)\Pi\left(s_0^{-1}\right)v \\
&=0. 
\end{align*}
\end{proof}

Therefore, the directions of orbits of the action $M\times V\curvearrowleft S$ are horizontal for the leafwise connection $\nabla$. By the expression of covariant derivative by parallel transport, we have 
\begin{align*}
\left(\nabla_X\xi\right)(x)&=\lim_{t\rightarrow0}\frac{\Pi\left(e^{tX}\right)\xi\left(\rho_0\left(x,e^{tX}\right)\right)-\xi(x)}{t} \\
&=\lim_{t\rightarrow0}\frac{\left(e^{tX}\xi\right)(x)-\xi(x)}{t}
\end{align*}
for any $\xi\in\Gamma(V)$, $X\in\mf{s}$ and $x\in M$. Note that $X\in\mf{s}$ is regarded as $X\in\Gamma(T\mca{F})$ using the locally free action $\rho_0$. 

\begin{lem}\label{uniformly}
For any $\xi\in\Gamma(V)$ and $X\in\mf{s}$, $\frac{e^{tX}\xi-\xi}{t}$ converges uniformly to $\nabla_X\xi$ as $t\rightarrow0$. 
\end{lem}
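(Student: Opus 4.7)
The plan is to reduce the claim to smoothness of a single auxiliary map on $M\times\bb{R}$ and then invoke compactness of $M$. Define $F\colon M\times\bb{R}\to V$ by
\begin{equation*}
F(x,t)=\left(e^{tX}\xi\right)(x)=\Pi\left(e^{tX}\right)\xi\left(\rho_0\left(x,e^{tX}\right)\right).
\end{equation*}
Since $\xi$, $\rho_0$ and $\Pi$ are all $C^\infty$, the map $F$ is $C^\infty$ on $M\times\bb{R}$. Moreover, the pointwise derivative formula displayed just before the statement of the lemma yields $(\partial_t F)(x,0)=\nabla_X\xi(x)$ for every $x\in M$.

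Next I would apply the fundamental theorem of calculus to $t\mapsto F(x,t)$, which gives
\begin{equation*}
\frac{F(x,t)-F(x,0)}{t}=\int_0^1(\partial_t F)(x,st)\,ds
\end{equation*}
for each $x\in M$ and $t\neq 0$. Subtracting $\nabla_X\xi(x)=\int_0^1(\partial_t F)(x,0)\,ds$, one obtains
\begin{equation*}
\left(\frac{e^{tX}\xi-\xi}{t}-\nabla_X\xi\right)(x)=\int_0^1\left[(\partial_t F)(x,st)-(\partial_t F)(x,0)\right]ds.
\end{equation*}

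To close the argument, I would observe that $\partial_t F$ is continuous on the compact set $M\times[-1,1]$, hence uniformly continuous there. Thus for every $\varepsilon>0$ there exists $\delta\in(0,1]$ such that $\left\Vert(\partial_t F)(x,s)-(\partial_t F)(x,0)\right\Vert<\varepsilon$ for all $x\in M$ and all $|s|<\delta$. Taking the supremum over $x\in M$ in the previous display yields
\begin{equation*}
\sup_{x\in M}\left\Vert\frac{(e^{tX}\xi)(x)-\xi(x)}{t}-\nabla_X\xi(x)\right\Vert<\varepsilon
\end{equation*}
whenever $0<|t|<\delta$, which is the uniform convergence required. The argument is essentially a routine consequence of joint smoothness of $F$ together with compactness of $M$, so no serious obstacle is anticipated; the only point that requires care is articulating why $(\partial_t F)(x,0)$ coincides with $\nabla_X\xi(x)$, which is precisely what the computation preceding the lemma establishes.
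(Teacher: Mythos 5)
Your proof is correct and follows essentially the same approach as the paper's: both reduce the statement to analyzing the smooth map $F(x,t)=(e^{tX}\xi)(x)$ on the compact set $M\times[-1,1]$ and use compactness to obtain uniformity in $x$. The only difference is a minor technical one: the paper expands each component $f_i(t,x)$ of $F$ to second order via Taylor's theorem with Lagrange remainder and bounds $f_i''$ on $M\times[-1,1]$, whereas you stop at first order via the fundamental theorem of calculus and invoke uniform continuity of $\partial_t F$; the two routes are interchangeable here.
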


\begin{proof}
Take a basis $v_1,\ldots,v_n$ of $V$ and write $\left(e^{tX}\xi\right)(x)=\sum_{i=1}^nf_i(t,x)v_i$ for some $C^\infty$ functions $f_i\colon\bb{R}\times M\to\bb{R}$. Then we have $\left(\nabla_X\xi\right)(x)=\sum_{i=1}^nf_i^\prime(0,x)v_i$. The function $f_i(t,x)$ has the Taylor expansion 
\begin{equation*}
f_i(t,x)=f_i(0,x)+tf_i^\prime(0,x)+\frac{t^2}{2}f_i^{\prime\prime}\left(\theta_{i,x,t},x\right), 
\end{equation*}
where $\theta_{i,x,t}$ is a number between $0$ and $t$. Since 
\begin{equation*}
\frac{\left(e^{tX}\xi\right)(x)-\xi(x)}{t}-\left(\nabla_X\xi\right)(x)=\frac{t}{2}\sum_{i=1}^nf_i^{\prime\prime}\left(\theta_{i,x,t},x\right)v_i
\end{equation*}
and $f_i^{\prime\prime}(\theta,x)$ is bounded for $-1\leq\theta\leq 1$ and $x\in M$, we get the conclusion. 
\end{proof}

\begin{lem}\label{commute}
Let $\mu\colon\Gamma_{blc}(V)\to\Gamma_{lc}(V)$ be the map in Lemma \ref{contraction}. Then 
\begin{equation*}
\mu\left(\nabla_X\xi\right)=\nabla_X\mu(\xi)
\end{equation*}
for all $\xi\in\Gamma(V)$ and $X\in\mf{s}$. (Note that $\mu(\xi)$ might be discontinuous on $M$.)
\end{lem}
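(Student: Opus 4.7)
The plan is to combine the three key properties already at hand: the uniform convergence of Lemma \ref{uniformly}, and the continuity and $S$-equivariance of $\mu$ from Lemma \ref{contraction}. Since $M$ is compact, a smooth section $\xi\in\Gamma(V)$ lies in $\Gamma_{blc}(V)$, and Lemma \ref{uniformly} says precisely that $\frac{e^{tX}\xi-\xi}{t}\to \nabla_X\xi$ in the supremum norm of $\Gamma_{blc}(V)$.

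First, I would apply $\mu$ to this uniform limit. Because $\mu\colon\Gamma_{blc}(V)\to\Gamma_{lc}(V)$ is a continuous linear map (of operator norm at most $\sqrt{n}$ from the proof of Lemma \ref{contraction}), the limit may be interchanged with $\mu$:
\begin{equation*}
\mu(\nabla_X\xi)=\lim_{t\to0}\mu\!\left(\frac{e^{tX}\xi-\xi}{t}\right)=\lim_{t\to0}\frac{\mu(e^{tX}\xi)-\mu(\xi)}{t},
\end{equation*}
convergence again being in the supremum norm. Next, the $S$-equivariance of $\mu$ gives $\mu(e^{tX}\xi)=e^{tX}\mu(\xi)$, so
\begin{equation*}
\mu(\nabla_X\xi)=\lim_{t\to0}\frac{e^{tX}\mu(\xi)-\mu(\xi)}{t}.
\end{equation*}

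Finally, I would evaluate this limit pointwise using the fact that $\mu(\xi)\in\Gamma_{lc}(V)$ is leafwise constant. For any $x\in M$, $\mu(\xi)(\rho_0(x,e^{tX}))=\mu(\xi)(x)$, so
\begin{equation*}
\frac{(e^{tX}\mu(\xi))(x)-\mu(\xi)(x)}{t}=\frac{\Pi(e^{tX})\mu(\xi)(x)-\mu(\xi)(x)}{t}\xrightarrow[t\to0]{}\pi(X)\mu(\xi)(x).
\end{equation*}
On the other hand, by the parallel-transport formula for $\nabla$ recalled just before Lemma \ref{uniformly} together with the leafwise constancy of $\mu(\xi)$, the right-hand side is exactly $\nabla_X\mu(\xi)(x)$ (interpreted pointwise, since $\mu(\xi)$ need not be smooth). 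Hence $\mu(\nabla_X\xi)(x)=\nabla_X\mu(\xi)(x)$ for every $x$, which is the desired identity.

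The only subtlety is making sure the commutation of $\mu$ with the limit is legitimate despite $\mu(\xi)$ being potentially discontinuous; this is handled cleanly because the convergence in Lemma \ref{uniformly} is \emph{uniform} and $\mu$ is bounded on $\Gamma_{blc}(V)$, so neither continuity of $\mu(\xi)$ nor existence of classical derivatives of $\mu(\xi)$ is needed — the pointwise computation in the last step, valid for any leafwise constant section, does the rest.
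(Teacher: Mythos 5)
Your proof is correct and takes essentially the same route as the paper's: uniform convergence from Lemma~\ref{uniformly}, then continuity and $S$-equivariance of $\mu$ to interchange the limit. The paper's proof is terser, stating $\lim_{t\to0}\frac{e^{tX}\mu(\xi)-\mu(\xi)}{t}=\nabla_X\mu(\xi)$ without spelling out the pointwise computation for the (possibly discontinuous) leafwise constant section $\mu(\xi)$, which you rightly make explicit in the final step.
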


\begin{proof}
By Lemma \ref{uniformly}, $\frac{e^{tX}\xi-\xi}{t}$ converges uniformly to $\nabla_X\xi$ as $t\rightarrow0$. By continuity and equivariance, we have 
\begin{equation*}
\mu\left(\nabla_X\xi\right)=\lim_{t\rightarrow0}\frac{e^{tX}\mu(\xi)-\mu(\xi)}{t}=\nabla_X\mu(\xi). 
\end{equation*}
\end{proof}

\begin{lem}\label{out}
Assume that $S$ is of exponential type. Let $\Psi\colon M\to\Aut(S)$ be a $C^\infty$ map which is constant on each leaf of $\mca{F}$. If $\rho_0$ is parameter rigid, then $\ol{\Psi}\colon M\to\Out(S)$ is constant on $M$, where bar denotes the projection $\Aut(S)\to\Out(S)$. In particular, if $\Out(S)\neq1$, $M$ must be connected. 
\end{lem}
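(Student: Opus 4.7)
The plan is to exploit parameter rigidity for an auxiliary action built from $\Psi$, derive a functional equation for a $C^\infty$ map $h\colon M\to S$, and then observe that this equation forces the automorphism $\Inn_{h(x)}\circ\Psi(x)\circ\Phi$ to be a bounded perturbation of $\id_S$; the conclusion will then follow because an automorphism of an exponential-type $S$ at bounded distance from the identity must itself be trivial.

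First, since $\Psi$ is leafwise constant, the formula $\rho_\Psi(x,s):=\rho_0(x,\Psi(x)(s))$ defines a $C^\infty$ locally free $S$-action on $M$ with the same orbit foliation as $\rho_0$; the associativity check reduces to $\Psi(\rho_\Psi(x,s))=\Psi(x)$, which holds because $\rho_\Psi(x,s)$ stays on the leaf through $x$. Parameter rigidity applied to $\rho_\Psi$ then produces a diffeomorphism $F$ of $M$, leaf-preserving and $C^0$-homotopic to $\id_M$ through leaf-preserving $C^\infty$ maps, and $\Phi\in\Aut(S)$, satisfying $F(\rho_0(x,s))=\rho_\Psi(F(x),\Phi(s))$. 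Since $S$ is simply connected, the homotopy lifts to a $C^\infty$ map $h\colon M\to S$ with $F(x)=\rho_0(x,h(x))$, and the preceding relation becomes, in $S$,
\[
s\cdot h(\rho_0(x,s)) \;=\; h(x)\cdot\Psi(x)(\Phi(s))\qquad(x\in M,\ s\in S).
\]

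Rewriting as $\Psi(x)(\Phi(s))=h(x)^{-1}\,s\,h(\rho_0(x,s))$ and conjugating by $h(x)$, define $\delta_x:=\Inn_{h(x)}\circ\Psi(x)\circ\Phi\in\Aut(S)$, so that $\delta_x(s)=s\cdot h(\rho_0(x,s))\,h(x)^{-1}$. Fix a left-invariant Riemannian metric $d$ on $S$; by left-invariance
\[
d(\delta_x(s),s) \;=\; d\bigl(1,\;h(\rho_0(x,s))\,h(x)^{-1}\bigr) \;\le\; C_x
\]
for every $s\in S$, where $C_x:=\max_{y\in M}d(1,h(y)h(x)^{-1})<\infty$ because $h(M)$ is compact. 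Thus each $\delta_x$ is an automorphism of $S$ at bounded $d$-distance from $\id_S$. The proof then concludes via the following sublemma, analogous to Lemma \ref{pp} below and to the boundedness argument in \cite{Ma}: if $S$ is a connected simply connected solvable Lie group of exponential type and $\alpha\in\Aut(S)$ satisfies $\sup_{s\in S}d(\alpha(s),s)<\infty$ for some left-invariant Riemannian metric $d$, then $\alpha=\id_S$. Granted this, $\delta_x=\id_S$ for every $x$, which unpacks to $\Psi(x)=\Inn_{h(x)^{-1}}\circ\Phi^{-1}$, so $\ol{\Psi}(x)=\ol{\Phi^{-1}}$ is constant on $M$.

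The main obstacle will be the sublemma itself. The idea is to apply the bounded-distance hypothesis to one-parameter subgroups: for each $Y\in\mf{s}$, $\alpha(e^{tY})=e^{t\alpha_*(Y)}$, so the curve $t\mapsto e^{-tY}e^{t\alpha_*(Y)}$ remains bounded; because $\exp\colon\mf{s}\to S$ is a diffeomorphism, it may be written as $e^{\phi(t)}$ for a bounded smooth $\phi\colon\bb{R}\to\mf{s}$. An eigenvalue analysis of $\ad Y$—using the exponential-type hypothesis that every nonzero eigenvalue of $\ad X$ has nonzero real part—combined with the boundedness of $\phi$, should force the drift $\alpha_*(Y)-Y$ to vanish for every $Y$, giving $\alpha_*=\id_\mf{s}$ and hence $\alpha=\id_S$. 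Making this eigenvalue/boundedness argument clean for general solvable $S$ (rather than merely nilpotent, where the Baker--Campbell--Hausdorff formula is polynomial and the leading coefficient comparison is direct) is the delicate step.
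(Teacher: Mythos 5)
Your approach is genuinely different from the paper's, and the overall strategy is sound, but it rests on a key sublemma that you only sketch and that is, in fact, where all the work lies.

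The paper proceeds infinitesimally: it applies parameter rigidity plus Asaoka's Proposition 1.4.4 to obtain the Maurer--Cartan-form identity $\Psi_{x*}\omega_0=\Ad(P^{-1})\Phi_*\omega_0+P^*\Theta$, factors $P=RQ$ with $R$ leafwise constant and $Q\colon M\to N$, and then runs an induction (Sublemma \ref{sublemma}) pushing $\log Q$ down the lower central series of $\mf{n}$. That induction uses two tools in tandem: an invariant mean $\mu$ coming from amenability of $S$, to split each component $Q_k$ into a leafwise-constant part and a remainder, and the exponential-type hypothesis, in a Jordan-block analysis, to kill the remainder via boundedness. You instead work at the group level: using the global form of the parameter equivalence and the lift $h$, you derive $\delta_x:=\Inn_{h(x)}\circ\Psi(x)\circ\Phi$ and observe $\sup_s d(\delta_x(s),s)<\infty$ because $h(M)$ is compact. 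This trades amenability of $S$ for compactness of $M$, which is a genuine and rather pleasant simplification: once the problem is reduced to a single automorphism $\delta_x$ at bounded distance from $\id_S$, there is no $M$-dependence left and no need for a mean.

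The gap is your sublemma, stated as ``if $S$ is exponential type and $\alpha\in\Aut(S)$ is at bounded distance from $\id_S$ then $\alpha=\id_S$''. You say this ``should follow'' from an eigenvalue analysis and call it ``the delicate step,'' but it is not a triviality and you give no proof. It is true, and the exponential-type hypothesis is indispensable: for $S=\bb{R}\ltimes\bb{R}^2$ with $\bb{R}$ acting by rotations (eigenvalues $\pm i$, not exponential type), $\alpha_*X=X+W$, $\alpha_*|_{\bb{R}^2}=\id$ is a nontrivial automorphism with $e^{-tX}\alpha(e^{tX})=(0,A^{-1}(I-e^{-tA})W)$ bounded. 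A correct proof of your sublemma needs precisely the same kind of filtration on $\mf{s}\supset\mf{n}\supset\mf{n}^2\supset\cdots$, Baker--Campbell--Hausdorff expansion, and real-Jordan-block eigenvalue analysis that appears in the paper's Sublemma, only in the simpler $M$-free setting. So the route is viable and arguably cleaner, but the crux of the lemma is not supplied; without it the argument is incomplete.
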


\begin{proof}
Define $M\stackrel{\rho}{\curvearrowleft}S$ by $\rho(x,s)=\rho_0\left(x,\Psi_x^{-1}(s)\right)$. This defines an action because $\Psi$ is leafwise constant: 
\begin{align*}
\rho\left(x,ss^\prime\right)&=\rho_0\left(\rho_0\left(x,\Psi_x^{-1}(s)\right),\Psi_x^{-1}\left(s^\prime\right)\right) \\
&=\rho_0\left(\rho(x,s),\Psi_{\rho(x,s)}^{-1}\left(s^\prime\right)\right) \\
&=\rho\left(\rho(x,s),s^\prime\right). 
\end{align*}
Since $\rho$ is a $C^\infty$ locally free action with the same orbit foliation as $\rho_0$, $\rho$ is parameter equivalent to $\rho_0$ by parameter rigidity. Note that $\Psi_{x*}\omega_0$ is the canonical $1$-form of $\rho$. By Proposition 1.4.4 of Asaoka \cite{A}, there exist $\Phi\in\Aut(S)$ and a $C^\infty$ map $P\colon M\to S$ such that 
\begin{equation}\label{psi}
\Psi_{x*}\omega_0=\Ad\left(P^{-1}\right)\Phi_*\omega_0+P^*\Theta, 
\end{equation}
where $\Theta$ denotes the left Maurer--Cartan form of $S$. (In \cite{A}, $\Phi$ is referred to as an endomorphism, but it is the same $\Phi$ appearing in the definition of parameter equivalence which we saw in Section \ref{introd}, so $\Phi$ can be taken as an automorphism. It is easy to see $P^*\Theta$ is equivalent to the expression $P^{-1}\df P$ in \cite{A}. There is a small difference between our definition of parameter equivalence and the one in \cite{A}, since in \cite{A} the map $F$ is assumed to be homotopic to the identity through diffeomorphisms. But this does not cause any problem here.)

Let $a$ denote both projections $\mf{s}\to\mf{s}/\mf{n}$ and $S\to S/N$, where $\mf{n}$ is the nilradical of $\mf{s}$ and $N$ is the Lie subgroup corresponding to $\mf{n}$. By projecting \eqref{psi}, we get 
\begin{equation}\label{psi1}
a\Psi_{x*}\omega_0=a\Phi_*\omega_0+\df aP, 
\end{equation}
since $\mf{s}/\mf{n}$ is abelian. For any $x\in M$, $X\in\mf{s}$ and $T>0$, we integrate \eqref{psi1} over the curve $\rho_0\left(x,e^{tX}\right)$ for $0\leq t\leq T$. Then noting $\Psi$ being leafwise constant, 
\begin{equation*}
Ta\Psi_{x*}X=Ta\Phi_*X+aP\left(\rho_0\left(x,e^{TX}\right)\right)-aP(x). 
\end{equation*}
Since $aP$ is bounded due to the compactness of $M$, we must have $a\Psi_{x*}X=a\Phi_*X$ and $aP$ is leafwise constant. Hence there exists a leafwise constant $C^\infty$ map $R\colon M\to S$ such that $Q=R^{-1}P\colon M\to N$. Since $R$ is leafwise constant, we have 
\begin{equation*}
P^*\Theta=(RQ)^*\Theta=Q^*\Theta
\end{equation*}
and \eqref{psi} becomes 
\begin{equation}\label{psi2}
\Psi_{x*}\omega_0=\Ad\left(Q^{-1}\right)\Ad\left(R^{-1}\right)\Phi_*\omega_0+Q^*\Theta. 
\end{equation}

Let $\mf{n}\supset\mf{n}^2\supset\cdots\supset\mf{n}^s\supset0$ be the lower central series of $\mf{n}$. Recall that $\exp\colon\mf{n}\to N$ is a diffeomorphism and $\log\colon N\to\mf{n}$ is defined. 

\begin{sublem}\label{sublemma}
Assume that there exist a $C^\infty$ map $Q\colon M\to N$ and a leafwise constant $C^\infty$ map $R\colon M\to S$ such that: 
\begin{itemize}
\item $\Psi_{x*}\omega_0=\Ad\left(Q^{-1}\right)\Ad\left(R^{-1}\right)\Phi_*\omega_0+Q^*\Theta$
\item $\log Q\in\mf{n}^{k}$ for some $1\leq k\leq s$. 
\end{itemize}
Then we can find a $C^\infty$ map $Q^\prime\colon M\to N$ and a leafwise constant $C^\infty$ map $R^\prime\colon M\to S$ such that: 
\begin{itemize}
\item $\Psi_{x*}\omega_0=\Ad\left(\left(Q^\prime\right)^{-1}\right)\Ad\left(\left(R^\prime\right)^{-1}\right)\Phi_*\omega_0+\left(Q^\prime\right)^*\Theta$
\item $\log Q^\prime\in\mf{n}^{k+1}$. 
\end{itemize}
\end{sublem}

\begin{proof}
Take subspaces $V_0,\ldots,V_s$ such that $\mf{s}=V_0\oplus\mf{n}$ and $\mf{n}^i=V_i\oplus\mf{n}^{i+1}$ for $i=1,\ldots,s$. We can write $Q=\exp\left(\sum_{i=k}^sQ_i\right)$ for some $C^\infty$ maps $Q_i\colon M\to V_i$. We will calculate the $V_k$ component of 
\begin{equation}\label{kkkk}
\Psi_{x*}\omega_0=\Ad\left(Q^{-1}\right)\Ad\left(R^{-1}\right)\Phi_*\omega_0+Q^*\Theta. 
\end{equation}
First note that 
\begin{equation*}
Q^*\Theta\equiv\df Q_k\ \ \ \ \text{mod $\mf{n}^{k+1}$}. 
\end{equation*}
In fact, for all $X=\left.\frac{d}{dt}x(t)\right\vert_{t=0}\in T_x\mca{F}$, 
\begin{align*}
Q^*\Theta(X)&=\left.\frac{d}{dt}Q(x)^{-1}Q\left(x(t)\right)\right\vert_{t=0} \\
&=\left.\frac{d}{dt}\exp\left(-\sum_{i=k}^sQ_i(x)\right)\exp\left(\sum_{i=k}^sQ_i\left(x(t)\right)\right)\right\vert_{t=0} \\
&=\left.\frac{d}{dt}\exp\left(\sum_{i=k}^s\left(Q_i\left(x(t)\right)-Q_i(x)\right)+\text{an element of $\mf{n}^{k+1}$}\right)\right\vert_{t=0} \\
&=\left.\frac{d}{dt}\exp\left(Q_k\left(x(t)\right)-Q_k(x)+\text{an element of $\mf{n}^{k+1}$}\right)\right\vert_{t=0} \\
&\equiv\df Q_k(X)\ \ \ \ \text{mod $\mf{n}^{k+1}$}. 
\end{align*}

Let $\mf{s}\stackrel{\pi_k^0}{\curvearrowright}V_k$ be the representation obtained from $\mf{s}\stackrel{\ad}{\curvearrowright}\mf{n}^k/\mf{n}^{k+1}$ by the identification $V_k\simeq\mf{n}^k/\mf{n}^{k+1}$. Put $\pi_k=\pi_k^0\circ\Phi_*$. We take $\mf{s}\stackrel{\pi_k}{\curvearrowright}V_k$ as $\mf{s}\stackrel{\pi}{\curvearrowright}V$ considered in the beginning of this section; we let $\nabla$ be the leafwise connection defined by $\pi_k$, and we let $\mu\colon\Gamma_{blc}(V_k)\to\Gamma_{lc}(V_k)$ be the map in Lemma \ref{contraction}. 

Write $\Psi_{x*}\omega_0=\sum_{i=0}^s\alpha_i$ and $\Ad\left(R^{-1}\right)\Phi_*\omega_0=\sum_{i=0}^s\beta_i$ according to the decomposition $\mf{s}=\bigoplus_{i=0}^sV_i$. Then we have 
\begin{align*}
\Ad\left(Q^{-1}\right)\Ad\left(R^{-1}\right)\Phi_*\omega_0&=\exp\left(\ad\left(-\sum_{i=k}^sQ_i\right)\right)\sum_{i=0}^s\beta_i \\
&\equiv\sum_{i=0}^k\beta_i+\left[\beta_0,Q_k\right]\ \ \ \ \text{mod $\mf{n}^{k+1}$} \\
&\equiv\sum_{i=0}^k\beta_i+\pi_k^0\beta_0Q_k\ \ \ \ \text{mod $\mf{n}^{k+1}$}. 
\end{align*}

Take the $V_k$ components of \eqref{kkkk} to get 
\begin{equation*}
\alpha_k=\beta_k+\pi_k^0\beta_0Q_k+\df Q_k. 
\end{equation*}
Since 
\begin{equation*}
\Phi_*\omega_0\equiv\Ad\left(R^{-1}\right)\Phi_*\omega_0\equiv\beta_0 \ \ \ \ \text{mod $\mf{n}$}
\end{equation*}
and $\pi_k^0$ vanishes on $\mf{n}$, we have $\pi_k\omega_0=\pi_k^0\beta_0$. Therefore, 
\begin{align*}
\nabla Q_k&=\df Q_k+\pi_k\omega_0Q_k \\
&=\df Q_k+\pi_k^0\beta_0Q_k, 
\end{align*}
and we get 
\begin{equation}\label{same}
\alpha_k=\beta_k+\nabla Q_k. 
\end{equation}
Hence 
\begin{equation*}
\alpha_k(X)=\beta_k(X)+\nabla_XQ_k
\end{equation*}
for any $X\in\mf{s}$. Note that $\alpha_k(X)$ and $\beta_k(X)$ are leafwise constant because $R$ is leafwise constant. Applying $\mu$ and using Lemma \ref{commute}, we get 
\begin{equation*}
\alpha_k(X)=\beta_k(X)+\nabla_X\mu(Q_k). 
\end{equation*}
Therefore, 
\begin{equation*}
\nabla_X\left(Q_k-\mu(Q_k)\right)=0. 
\end{equation*}
Put $Q_k^\prime=Q_k-\mu(Q_k)$. We shall see $Q_k^\prime$ is leafwise constant. Let $S\stackrel{\Pi_k}{\curvearrowright}V_k$ be the representation with the derivative $\mf{s}\stackrel{\pi_k}{\curvearrowright}V_k$. Then for any $t\in\bb{R}$ and $x\in M$, we have 
\begin{align*}
\left.\frac{d}{ds}\right\vert_{s=t}\left(e^{sX}Q_k^\prime\right)(x)&=\lim_{h\to0}\Pi_k\left(e^{tX}\right)\frac{\left(e^{hX}Q_k^\prime\right)\left(\rho_0\left(x,e^{tX}\right)\right)-Q_k^\prime\left(\rho_0\left(x,e^{tX}\right)\right)}{h} \\
&=\Pi_k\left(e^{tX}\right)\left(\nabla_XQ_k^\prime\right)\left(\rho_0\left(x,e^{tX}\right)\right) \\
&=0. 
\end{align*}
Thus $\left(e^{tX}Q_k^\prime\right)(x)=\Pi_k\left(e^{tX}\right)Q_k^\prime\left(\rho_0\left(x,e^{tX}\right)\right)$ is constant with respect to $t$. So 
\begin{equation*}
Q_k^\prime\left(\rho_0\left(x,e^{tX}\right)\right)=\Pi_k\left(e^{-tX}\right)Q_k^\prime(x)=e^{-t\pi_k(X)}Q_k^\prime(x)
\end{equation*}
for all $t\in\bb{R}$. Note that $Q_k^\prime\left(\rho_0\left(x,e^{tX}\right)\right)$ is bounded with respect to $t$. Take a basis of $V_k$ which turns $-\pi_k(X)=-\pi_k^0\left(\Phi_*X\right)$ into a real Jordan normal form. Since any eigenvalue of $\ad X\colon\mf{s}\to\mf{s}$ for any $X\in\mf{s}$ either is $0$ or has nonzero real part by our assumption that $\mf{s}$ is of exponential type, the same is true for $\pi_k^0(X)\colon V_k\to V_k$ for all $X\in\mf{s}$. Therefore, each Jordan block of $-\pi_k(X)=-\pi_k^0(\Phi_*X)$ has the eigenvalue which either is $0$ or has nonzero real part. For a Jordan block whose eigenvalue has the nonzero real part, the corresponding components of $e^{-t\pi_k(X)}Q_k^\prime(x)$ have the following forms: 
\begin{equation*}
\begin{pmatrix}
e^{ta}&&*\\
&\ddots&\\
0&&e^{ta}
\end{pmatrix}
\begin{pmatrix}
c_1\\
\vdots\\
c_m
\end{pmatrix}
\end{equation*}
if the eigenvalue $a$ is real, and 
\begin{equation*}
\begin{pmatrix}
e^{ta}R_t&&*\\
&\ddots&\\
0&&e^{ta}R_t
\end{pmatrix}
\begin{pmatrix}
c_1\\
\vdots\\
c_m
\end{pmatrix}
, 
\end{equation*}
where
\begin{equation*}
R_t=
\begin{pmatrix}
\cos tb&\sin tb\\
-\sin tb&\cos tb
\end{pmatrix}
\end{equation*}
if the eigenvalue $a+bi$ is not real. Since this must be bounded for all $t\in\bb{R}$, $c_1=\cdots=c_m=0$, which implies the corresponding components of $Q_k^\prime\left(\rho_0\left(x,e^{tX}\right)\right)$ must be constant. 

On the other hand, for a Jordan block with the eigenvalue $0$, the corresponding components in $e^{-t\pi_k(X)}Q_k^\prime(x)$ is 
\begin{equation*}
\begin{pmatrix}
1&&*\\
&\ddots&\\
0&&1
\end{pmatrix}
\begin{pmatrix}
c_1\\
\vdots\\
c_m
\end{pmatrix}
, 
\end{equation*}
where the entries in the $*$ part of the matrix are now polynomials in $t$. Since bounded polynomial functions must be constant, we see the corresponding components in $Q_k^\prime\left(\rho_0\left(x,e^{tX}\right)\right)$ are also constant. 

So $Q_k$ is leafwise constant. Put $Q^\prime=e^{-Q_k}Q$. Then $\log Q^\prime$ has values in $\mf{n}^{k+1}$ and 
\begin{equation*}
\Psi_{x*}\omega_0=\Ad\left(\left(Q^\prime\right)^{-1}\right)\Ad\left(\left(R^\prime\right)^{-1}\right)\Phi_*\omega_0+\left(Q^\prime\right)^*\Theta, 
\end{equation*}
where $R^\prime=Re^{Q_k}$ is leafwise constant. 
\end{proof}

Applying Sublemma \ref{sublemma} to \eqref{psi2} repeatedly, we finally get $Q=1$ and therefore 
\begin{equation*}
\Psi_{x*}\omega_0=\Ad\left(R^{-1}\right)\Phi_*\omega_0
\end{equation*}
for some $R$. Therefore, $\Psi_x$ is equal to $\Phi$ modulo inner automorphisms. 
\end{proof}

Theorem \ref{h^0} is restated and proved here. 

\begin{thm}
Assume that $S$ is of exponential type and there is an outer derivation of $\mf{s}$. If $M\stackrel{\rho_0}{\curvearrowleft}S$ is parameter rigid, then $M$ is connected and $H^0(\mca{F})=H^0(\mf{s})$. 
\end{thm}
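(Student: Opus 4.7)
Fix an outer derivation $D$ of $\mf{s}$, and write $\Phi_t:=e^{tD}\in\Aut(\mf{s})$. Because $S$ is connected and simply connected, every Lie algebra automorphism integrates to a group automorphism, so $\Aut(\mf{s})=\Aut(S)$ and each $\Phi_t$ is a $C^\infty$ automorphism of $S$. The strategy is to feed families of the form $\Psi_x:=\Phi_{f(x)}$, parameterized by leafwise constant functions $f$, into Lemma \ref{out}, and then exploit outerness of $D$ to force $f$ to be constant.

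\textbf{Step 1 (a discreteness property).} Consider the set
\begin{equation*}
T:=\lbrace t\in\bb{R}\mid\Phi_t\in\Inn(S)\rbrace,
\end{equation*}
which is a subgroup of $\bb{R}$ because $\lbrace\Phi_t\rbrace_{t\in\bb{R}}$ is a one-parameter subgroup and $\Inn(S)$ is a subgroup. I would show that $T$ contains no neighborhood of $0$. If it did, then by the subgroup property $T=\bb{R}$, so the entire one-parameter subgroup $\lbrace\Phi_t\rbrace$ lies in the (immersed) Lie subgroup $\Inn(S)=\Ad(S)\subset\Aut(\mf{s})$. Since the Lie algebra of $\Ad(S)$ is $\ad(\mf{s})$, this would force
\begin{equation*}
D=\left.\tfrac{d}{dt}\right\vert_{t=0}\Phi_t\in\ad(\mf{s}),
\end{equation*}
contradicting $D$ being outer. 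In particular, picking any $t_0\notin T$, the automorphism $\Phi_{t_0}$ is outer, so $\Out(S)\neq1$.

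\textbf{Step 2 (connectedness of $M$).} This is now immediate from the ``in particular'' clause of Lemma \ref{out}: since $\Out(S)\neq1$, $M$ must be connected. (Concretely, if $M$ had distinct components $M_1,M_2$, then $\Psi:=\Phi_{t_0}\cdot\chi_{M_1}+\id\cdot\chi_{M_2}$ would be a $C^\infty$ leafwise constant map to $\Aut(S)$ whose image in $\Out(S)$ takes two values, contradicting Lemma \ref{out}.)

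\textbf{Step 3 (vanishing of $H^0$).} Let $f\in H^0(\mca{F})$, i.e.\ $f\colon M\to\bb{R}$ is $C^\infty$ and leafwise constant. Define $\Psi_x:=\Phi_{f(x)}=e^{f(x)D}$. Then $\Psi$ is $C^\infty$ and leafwise constant, so Lemma \ref{out} gives that $\ol{\Psi}\colon M\to\Out(S)$ is constant. Fixing a basepoint $x_0\in M$, this means
\begin{equation*}
\Psi_x\Psi_{x_0}^{-1}=e^{(f(x)-f(x_0))D}=\Phi_{f(x)-f(x_0)}\in\Inn(S)
\end{equation*}
for every $x\in M$; equivalently, $f(x)-f(x_0)\in T$. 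Since $M$ is connected (Step 2) and $f-f(x_0)$ is continuous, its image is a connected subset of $T\subset\bb{R}$ containing $0$; by Step 1 this image must be $\lbrace0\rbrace$, so $f$ is constant. Hence $H^0(\mca{F})=H^0(\mf{s})$.

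\textbf{Main obstacle.} The substantive point is Step 1: identifying the Lie algebra of the (possibly non-closed) connected immersed subgroup $\Inn(S)=\Ad(S)\subset\Aut(\mf{s})$ with $\ad(\mf{s})$, and concluding that any one-parameter subgroup of $\Aut(\mf{s})$ whose image sits in $\Ad(S)$ has generator in $\ad(\mf{s})$. This rests on standard Lie theory (the map $\Ad\colon S\to\Ad(S)$ is a Lie group homomorphism with derivative $\ad\colon\mf{s}\to\ad(\mf{s})$, and a one-parameter subgroup contained in an immersed Lie subgroup is a one-parameter subgroup of the latter). Note that the exponential-type hypothesis on $S$ enters only through its use in Lemma \ref{out}; the outer-derivation hypothesis is exactly what is needed to produce $\Phi_t$ with $t\notin T$, which drives both Step 2 and Step 3.
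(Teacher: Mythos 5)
Your proposal is correct and follows essentially the same route as the paper: after constructing $\Phi_t=e^{tD}$ and applying Lemma \ref{out} to $\Psi_x=\Phi_{f(x)}$, the only remaining point is to conclude that $f$ is constant, and your argument (that $T=\{t:\Phi_t\in\Inn(S)\}$ is a subgroup with empty interior, so a connected subset containing $0$ is a singleton) is just a slightly reorganized version of the paper's observation that the curve $\Phi_t$ is transverse to the coset foliation of $\Inn(S)$ in $\Aut(S)$. Both hinge on the same Lie-theoretic fact that $D\notin\ad(\mf{s})$ forces $\Phi_t$ to leave the (possibly non-closed, immersed) subgroup $\Inn(S)$.
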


\begin{proof}
Since there is an outer derivation of $\mf{s}$, the outer automorphism group $\Out(S)$ of $S$ is nontrivial, hence $M$ is connected. Take an outer derivation $\varphi$ of $\mf{s}$ and set $\Phi_t=e^{t\varphi}\in\Aut(S)$. For any $f\in H^0(\mca{F})$, consider a map $M\to\Aut(S)$ defined by $x\mapsto\Phi_{f(x)}$. Since this is leafwise constant, $x\mapsto\overline{\Phi_{f(x)}}\in\Out(S)$ is constant by Lemma \ref{out}. Let $\Inn(S)$ denote the inner automorphism group of $S$. This is a connected normal Lie subgroup of $\Aut(S)$. We must be a bit careful because $\Inn(S)$ might not be closed in $\Aut(S)$ in general. See Hochschild \cite{Hoch}. But the cosets of $\Inn(S)$ defines a foliation on $\Aut(S)$ and $\Phi_t$ is a curve transverse to the foliation. Since the automorphisms $\Phi_{f(x)}$ for all $x\in M$ are contained in a single leaf of $\mca{F}$ and $M$ is connected, $\Phi_{f(x)}$ must be constant with respect to $x$. This implies $f$ is constant over $M$. 
\end{proof}

Finally we see vanishing of $H^0$ with nontrivial coefficients. 

\begin{lem}\label{var}
Assume $H^0(\mca{F})=H^0(\mf{s})$. Let $\mf{s}\stackrel{\pi}{\curvearrowright}V$ be a representation for which $\pi(X)$ has no nonzero purely imaginary eigenvalues for each $X\in\mf{s}$. Then $H^0(\mca{F};\pi)=H^0(\mf{s};\pi)$. 
\end{lem}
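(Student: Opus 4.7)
The plan is to show that any $\xi\in\Gamma(V)$ with $\nabla\xi=0$ (i.e.\ any representative of a class in $H^0(\mca{F};\pi)$) is in fact a constant map $M\to V^\mf{s}$, where $V^\mf{s}=\bigcap_{X\in\mf{s}}\ker\pi(X)$; this constant map is exactly what represents a class in $H^0(\mf{s};\pi)$ under the inclusion $\Hom(\bigwedge^0\mf{s},V)=V\hookrightarrow\Gamma(V)$.

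The first step is to translate the parallelism $\nabla\xi=0$ into a statement about the $S$-action, using the observation (made just before Lemma \ref{uniformly}) that the $S$-orbits in $M\times V$ are horizontal for $\nabla$. Concretely, $\nabla_X\xi=0$ for all $X\in\mf{s}$ is equivalent to $e^{tX}\xi=\xi$ in $\Gamma(V)$ for all $t$, which unpacks to the identity
\begin{equation*}
\xi\left(\rho_0(x,e^{tX})\right)=e^{-t\pi(X)}\xi(x)
\end{equation*}
for every $x\in M$, $X\in\mf{s}$, and $t\in\bb{R}$. Since $M$ is compact and $\xi$ continuous, the left-hand side is bounded in $t$.

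Next I would invoke the eigenvalue hypothesis exactly as in the proof of Sublemma inside Lemma \ref{out}. Choose a basis putting $-\pi(X)$ in real Jordan normal form. On each Jordan block whose eigenvalue has nonzero real part (possibly complex, with the paired block structure from a complex conjugate eigenvalue), boundedness of $e^{-t\pi(X)}\xi(x)$ in $t$ forces the corresponding components of $\xi(x)$ to vanish; on each Jordan block with eigenvalue $0$, the same boundedness forces the components of $\xi(x)$ to lie in the kernel of that block, since a nontrivial nilpotent action produces unbounded polynomial entries in $t$. The assumption that $\pi(X)$ has no nonzero purely imaginary eigenvalues is precisely what rules out the one remaining bounded-but-nonconstant case (rotation blocks $e^{tb\cdot J}$), which would otherwise allow nonzero $\xi(x)$ outside $\ker\pi(X)$. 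In total, $\pi(X)\xi(x)=0$ for every $X\in\mf{s}$ and every $x$, so $\xi(x)\in V^\mf{s}$ pointwise.

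Once $\pi\omega_0\xi\equiv0$, the parallelism $\nabla\xi=\df\xi+\pi\omega_0\xi=0$ collapses to $\df\xi=0$, so $\xi$ is leafwise constant. Expanding $\xi$ in a basis of $V^\mf{s}$ gives finitely many real-valued leafwise constant $C^\infty$ functions on $M$, each of which is constant by the hypothesis $H^0(\mca{F})=H^0(\mf{s})$. Hence $\xi$ is a global constant lying in $V^\mf{s}$, which is what we needed. There is no real obstacle: the argument is a direct transplant of the Jordan-block/boundedness analysis already carried out in Sublemma, combined with the assumed vanishing of $H^0(\mca{F})$ modulo constants; the only point demanding care is verifying that the eigenvalue hypothesis on $\pi(X)$ (weaker than exponential type on $\mf{s}$ itself) suffices for the Jordan-block analysis to go through, but it does because only the spectrum of the operator actually acting on $V$ matters there.
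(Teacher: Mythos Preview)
Your proposal is correct and follows essentially the same approach as the paper's proof: derive the flow identity $\xi(\rho_0(x,e^{tX}))=e^{-t\pi(X)}\xi(x)$, invoke boundedness together with the real Jordan normal form analysis from the Sublemma to conclude leafwise constancy, and then apply the hypothesis $H^0(\mca{F})=H^0(\mf{s})$. The only cosmetic difference is that you extract $\pi(X)\xi(x)=0$ directly from the Jordan-block analysis and then deduce $\df\xi=0$, whereas the paper first deduces that $t\mapsto\xi(\rho_0(x,e^{tX}))$ is constant (hence $\xi$ is leafwise constant) and only afterwards reads off $\pi(X)\xi=0$; these are equivalent rearrangements of the same argument.
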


\begin{proof}
Take $\xi\in H^0(\mca{F};\pi)$. The function $\xi$ satisfies $\df\xi+\pi\omega_0\xi=0$. This means $X\xi+\pi(X)\xi=0$ for all $X\in\mf{s}$. For each $x\in M$ this is solved as $\xi\left(\rho_0\left(x,e^{tX}\right)\right)=e^{-t\pi(X)}\xi(x)$ for all $t\in\bb{R}$. As in the proof of Sublemma \ref{sublemma} we transform $\pi(X)$ into a real Jordan normal form and $\xi$ being bounded implies $\xi\left(\rho_0\left(x,e^{tX}\right)\right)$ must be constant. Therefore, $\xi$ is leafwise constant. By the assumption $H^0(\mca{F})=H^0(\mf{s})$, $\xi$ is constant on $M$. Hence $\xi\in V$ and $\pi(X)\xi=0$ for all $X\in\mf{s}$, which shows $\xi\in H^0(\mf{s};\pi)$. 
\end{proof}

\section{Vanishing of $H^1$---proof of Theorem \ref{h^1}}\label{hone}
Here we prove the following (a restatement of Theorem \ref{h^1}). 

\begin{thm}
Let $V\subset\mf{s}$ be an $\ad$-invariant subspace (ie an ideal of $\mf{s}$) for which $\mf{n}\stackrel{\ad}{\curvearrowright}V$ is trivial. Assume that any eigenvalue of $\ad X$ on $\mf{s}/V$ either is $0$ or has nonzero real part for any $X\in\mf{s}$. If $M\stackrel{\rho_0}{\curvearrowleft}S$ is parameter rigid, then we have 
\begin{equation*}
H^1\left(\mca{F};\mf{s}\stackrel{\ad}{\curvearrowright}V\right)=H^0(\mca{F})\otimes H^1\left(\mf{s};\mf{s}\stackrel{\ad}{\curvearrowright}V\right). 
\end{equation*}
\end{thm}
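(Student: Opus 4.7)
The plan is to mimic the strategy that proved Theorem~\ref{h^0} via Lemma~\ref{out}: turn the given cocycle into a one-parameter deformation of $\rho_0$, apply parameter rigidity to relate the deformation to a gauge transformation, and then extract the cohomological conclusion by combining a boundedness argument (as in Sublemma~\ref{sublemma}) with the spectral hypothesis on $\ad X$ acting on $\mf{s}/V$.

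First, given a cocycle $\omega \in Z^1(\mca{F}; \mf{s}\stackrel{\ad}{\curvearrowright} V)$, I would verify that the $\mf{s}$-valued $1$-form $\omega_t := \omega_0 + t\omega$ on $T\mca{F}$ satisfies the Maurer--Cartan equation for every $t \in \bb{R}$. This uses the cocycle condition on $\omega$ together with the abelianness of $V$ (immediate from $V \subset \mf{n}$ and $[\mf{n}, V] = 0$), which kills the $[t\omega, t\omega]$ term. For $|t|$ small, compactness of $M$ makes $\omega_t$ still a pointwise isomorphism $T_x\mca{F} \to \mf{s}$, so $\omega_t$ integrates to a $C^\infty$ locally free action $\rho_t$ of $S$ on $M$ with orbit foliation $\mca{F}$. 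Parameter rigidity then provides, for each small $t$, an automorphism $\Phi_t \in \Aut(S)$ and a smooth map $P_t \colon M \to S$ with $P_0 = 1$, $\Phi_0 = \id$ satisfying
$$\omega_0 + t\omega = \Ad(P_t^{-1})\Phi_{t*}\omega_0 + P_t^* \Theta.$$
Arranging $(P_t, \Phi_t)$ to vary smoothly in $t$ and differentiating at $t = 0$, a calculation parallel to the one in the proof of Lemma~\ref{out} yields
$$\omega = \nabla \dot P + \dot\Phi \circ \omega_0,$$
where $\dot P \colon M \to \mf{s}$, $\dot\Phi \in \mathrm{Der}(\mf{s})$ are the derivatives, and $\nabla$ is the leafwise connection for $\mf{s}\stackrel{\ad}{\curvearrowright} \mf{s}$.

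Projecting the last equation via the quotient $a \colon \mf{s} \to \mf{s}/V$, which is compatible with $\ad$ since $V$ is an ideal, and using that $\omega$ is $V$-valued, yields the inhomogeneous equation
$$\nabla^{\mf{s}/V}(a\dot P) + (a\dot\Phi)\circ \omega_0 = 0.$$
Along each orbit $t \mapsto \rho_0(x, e^{tX})$ this is a constant-coefficient linear ODE for $a\dot P$ with constant source $-(a\dot\Phi)(X)$. On each generalized eigenspace of $\ad X$ on $\mf{s}/V$ for nonzero eigenvalues I split off the particular solution $(\ad X)^{-1}(a\dot\Phi)(X)$; the spectral hypothesis then lets me run the Jordan-block/bounded-polynomial argument of Sublemma~\ref{sublemma} on the homogeneous residual, using boundedness of $a\dot P$ on compact $M$, to conclude that $a\dot P$ is leafwise constant. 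Splitting $\dot P = h + k$ along a fixed direct sum $\mf{s} = V \oplus W$, with $h \colon M \to V$ smooth and $k \colon M \to W$ leafwise constant, a short computation gives $\nabla k = \phi_1 \circ \omega_0$ with the leafwise constant $\phi_1 = -\ad \circ k \colon M \to \Hom(\mf{s}, \mf{s})$. Hence $\omega = \nabla h + (\phi_1 + \dot\Phi)\circ\omega_0$, and since $\omega$ and $\nabla h$ are both $V$-valued (the latter because $V$ is an ideal), the leafwise constant map $\phi := \phi_1 + \dot\Phi$ is forced to take values in $\Hom(\mf{s}, V)$. This exhibits $[\omega]$ as an element of $H^0(\mca{F}) \otimes H^1\bigl(\mf{s}; \mf{s}\stackrel{\ad}{\curvearrowright} V\bigr)$.

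The main obstacle is the analysis of the projected ODE. Unlike the homogeneous equation in Sublemma~\ref{sublemma}, this ODE carries a nontrivial constant inhomogeneity, so one must first isolate a particular solution using invertibility of $\ad X$ on the nonzero-eigenvalue part of $\mf{s}/V$ and then pin down the homogeneous residual from boundedness on compact $M$; the possible presence of purely imaginary eigenvalues of $\ad X$ on $V$ itself is harmless because the spectral analysis takes place entirely on the quotient $\mf{s}/V$. A secondary technical point is the smoothness in $t$ of $(P_t, \Phi_t)$, which is not automatic from parameter rigidity but should follow from an implicit-function-type argument at $t = 0$, which is the only place smoothness is actually needed.
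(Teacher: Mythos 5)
Your overall strategy — turn $\omega$ into a deformation of $\rho_0$, invoke parameter rigidity to obtain a gauge-theoretic identity, and then use boundedness together with the spectral hypothesis — is the right philosophy, but the implementation has a genuine gap that is not a ``secondary technical point.'' You differentiate the identity
\begin{equation*}
\omega_0 + t\omega = \Ad\left(P_t^{-1}\right)\Phi_{t*}\omega_0 + P_t^*\Theta
\end{equation*}
at $t=0$, which presupposes that the pair $(P_t,\Phi_t)$ produced by parameter rigidity can be chosen to depend smoothly (or even just differentiably) on $t$. Parameter rigidity only asserts existence for each separate $t$, and there is no canonical choice; moreover the linearization of the map $(P,\Phi)\mapsto\Ad(P^{-1})\Phi_*\omega_0+P^*\Theta$ at $(1,\id)$ is exactly $(\dot P,\dot\Phi)\mapsto\nabla\dot P+\dot\Phi\circ\omega_0$, so an implicit-function argument yielding a smooth branch would require surjectivity of this linearized map onto the space of $V$-valued cocycles — which is precisely what the theorem asserts. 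The proposed fix is therefore circular, and the issue is compounded by the fact that these are Fr\'echet-type mapping spaces where even a nondegenerate IFT is delicate.

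The paper avoids this entirely. It fixes a \emph{single} small $\epsilon>0$, sets $\eta=\omega_0+\epsilon\omega$, and works directly with the resulting nonlinear identity $\omega_0+\epsilon\omega=\Ad(Q^{-1})\Psi_*\omega_0+Q^*\Theta$ (after first projecting mod $\mf{n}$ to reduce $P$ to $Q:M\to N$). Rather than projecting modulo $V$ in one step — which would leave unmanageable nonlinear terms in $\Ad(Q^{-1})$ and $Q^*\Theta$ — it introduces a filtration
\begin{equation*}
\mf{s}\supset\mf{n}=W_1\supset\mf{n}^2+V\supset\cdots\supset\mf{n}^{s-1}+V\supset V\supset 0
\end{equation*}
with $[\mf{n},W_i]\subset W_{i+1}$, and peels off one graded piece at a time (Lemma~\ref{pp}). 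At each stage, modulo $W_{k+1}$ the identity becomes \emph{linear} in the $V_k$-component $Q_k$, and the amenability averaging $\mu$ plus the Jordan-block argument shows $Q_k$ is leafwise constant, allowing it to be absorbed into the leafwise constant part $\Psi$. Only at the last step ($k=s$, the $V$-component) does $\omega$ reappear, as $\omega=\epsilon^{-1}(\beta_s-\alpha_s)+\nabla(\epsilon^{-1}Q_s)$, which exhibits $[\omega]$ as leafwise constant. This inductive reduction through an associated graded is what replaces your differentiation-in-$t$; it extracts the same linear structure from the equation at a fixed $\epsilon$ without ever needing $(P_t,\Phi_t)$ to vary smoothly. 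Your projected-ODE analysis and Jordan-form boundedness reasoning on $\mf{s}/V$, and the final split $\dot P = h + k$, are essentially correct in content and parallel the paper's handling of each graded step; the missing ingredient is the mechanism that legitimately produces the linear equation to which that analysis applies.
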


\begin{proof}
Take any $[\omega]\in H^1\left(\mca{F};\mf{s}\stackrel{\ad}{\curvearrowright}V\right)$. Let $\omega_0$ be the canonical $1$-form of $\rho_0$. Fix an $\epsilon>0$ and put $\eta:=\omega_0+\epsilon\omega\in\Gamma\left(\Hom\left(T\mca{F},\mf{s}\right)\right)$. Let us see $\eta$ satisfies the Maurer--Cartan equation. As we saw in Section \ref{necessary conditions}, $V$ is abelian and then 
\begin{align*}
\df\eta+[\eta,\eta]
&=\df\omega_0+\epsilon\df\omega+[\omega_0,\omega_0]+\epsilon\left([\omega_0,\omega]+[\omega,\omega_0]\right)\\
&=\epsilon\left(\df\omega+[\omega_0,\omega]+[\omega,\omega_0]\right). 
\end{align*}
But this is zero because $\omega$ satisfies $\df\omega+(\ad\omega_0)\wedge\omega=0$ and $(\ad\omega_0)\wedge\omega=[\omega_0,\omega]+[\omega,\omega_0]$. 

Since $M$ is compact, we can assume $\eta_x\colon T_x\mca{F}\to\mf{s}$ is bijective for all $x\in M$ by taking $\epsilon>0$ small enough. Then there exists a unique action $\rho$ of $S$ on $M$ whose orbit foliation is $\mca{F}$ and whose canonical $1$-form is $\eta$. See Asaoka \cite[Proposition 1.4.3]{A}. By parameter rigidity, $\rho$ is parameter equivalent to $\rho_0$. Thus by Proposition 1.4.4 of \cite{A}, there exist a $C^{\infty}$ map $P\colon M\to S$ and an automorphism $\Phi$ of $S$ satisfying 
\begin{equation}\label{ant}
\omega_0+\epsilon\omega=\Ad\left(P^{-1}\right)\Phi_*\omega_0+P^*\Theta, 
\end{equation}
where $\Theta$ is the left Maurer--Cartan form of $S$. By seeing this equation modulo $\mf{n}$, we get 
\begin{equation*}
\omega_0\equiv\Phi_*\omega_0+\df\ol{P} \ \ \ \ \text{mod $\mf{n}$}, 
\end{equation*}
where bar denotes the projection $S\to S/N$. The same argument as in the proof of vanishing of $H^0$ yields 
\begin{equation*}
\omega_0\equiv\Phi_*\omega_0 \ \ \ \ \text{mod $\mf{n}$}, 
\end{equation*}
\begin{equation*}
\df\ol{P}\equiv0 \ \ \ \ \text{mod $\mf{n}$}. 
\end{equation*}
So we can take a leafwise constant $C^\infty$ map $R\colon M\to S$ such that $Q:=R^{-1}P\in N$. Then Equation \eqref{ant} becomes 
\begin{align*}
\omega_0+\epsilon\omega&=\Ad\left(Q^{-1}R^{-1}\right)\Phi_*\omega_0+\left(RQ\right)^*\Theta \nonumber \\ 
&=\Ad\left(Q^{-1}\right)\Psi_*\omega_0+Q^*\Theta, 
\end{align*}
where $\Psi_*=\Ad\left(R^{-1}\right)\Phi_*$ is leafwise constant. 

\begin{lem}
There exists a filtration
\begin{equation*}
\mf{s}\supset\mf{n}=W_1\supset W_2\supset\cdots\supset W_s=V\supset W_{s+1}=0, 
\end{equation*}
where $W_i$'s are ideals of $\mf{s}$ such that $[\mf{n},W_i]\subset W_{i+1}$. 
\end{lem}

\begin{proof}
If 
\begin{equation*}
\mf{n}\supset\mf{n}^2\supset\cdots\supset\mf{n}^{s-1}\supset0
\end{equation*}
denotes the lower central series of $\mf{n}$, then the filtration 
\begin{equation*}
\mf{s}\supset\mf{n}\supset\mf{n}^2+V\supset\mf{n}^3+V\supset\cdots\supset\mf{n}^{s-1}+V\supset V\supset0
\end{equation*}
gives the desired filtration. 
\end{proof}

Note that we have $\omega_0\equiv\Psi_*\omega_0$ modulo $W_1$. 

\begin{lem}\label{pp}
Assume there exist a $C^\infty$ map $Q\colon M\to N$ and a leafwise constant $C^\infty$ map $\Psi\colon M\to\Aut(S)$ such that 
\begin{equation}\label{apple}
\omega_0+\epsilon\omega=\Ad\left(Q^{-1}\right)\Psi_*\omega_0+Q^*\Theta, 
\end{equation}
\begin{equation*}
\log Q\in W_k
\end{equation*}
and 
\begin{equation*}
\omega_0\equiv\Psi_*\omega_0 \ \ \ \ \text{mod $W_k$: }
\end{equation*}
\begin{enumerate}
\item If $k<s$, then there exist a $C^\infty$ map $Q^\prime\colon M\to N$ and a leafwise constant $C^\infty$ map $\Psi^\prime\colon M\to\Aut(S)$ such that 
\begin{equation*}
\omega_0+\epsilon\omega=\Ad\left(\left(Q^\prime\right)^{-1}\right)\Psi^\prime_*\omega_0+\left(Q^\prime\right)^*\Theta, 
\end{equation*}
\begin{equation*}
\log Q^\prime\in W_{k+1}
\end{equation*}
and 
\begin{equation*}
\omega_0\equiv\Psi^\prime_*\omega_0 \ \ \ \ \text{mod $W_{k+1}$}. 
\end{equation*}
\item If $k=s$, then $\omega$ is cohomologous to a leafwise constant cocycle. 
\end{enumerate}
\end{lem}

\begin{proof}
The proof is similar to the proof of Sublemma \ref{sublemma}. 
Take complementary subspaces $V_i$'s so that $\mf{s}=V_0\oplus\mf{n}$ and $W_i=V_i\oplus W_{i+1}$. Write 
\begin{gather*}
\omega_0=\sum_{i=0}^s\alpha_i,\quad\Psi_*\omega_0=\sum_{i=0}^s\beta_i\quad\text{and}\quad Q=\exp\left(\sum_{i=k}^sQ_i\right)
\end{gather*}
according to the decomposition $\mf{s}=\bigoplus_{i=0}^sV_i$. 

The same calculation as in Sublemma \ref{sublemma} gives 
\begin{equation*}
Q^*\Theta\equiv\df Q_k \ \ \ \ \text{mod $W_{k+1}$}. 
\end{equation*}
We have 
\begin{align*}
\Ad\left(Q^{-1}\right)\Psi_*\omega_0&=\exp\left(\ad\left(-\sum_{i=k}^sQ_i\right)\right)\sum_{i=0}^s\beta_i \\
&\equiv\sum_{i=0}^k\beta_i+[\beta_0,Q_k]\ \ \ \ \text{mod $W_{k+1}$} \\ 
&=\sum_{i=0}^{k-1}\alpha_i+\beta_k+[\alpha_0,Q_k]\ \ \ \ \text{mod $W_{k+1}$}.
\end{align*}
Equation \eqref{apple} gives 
\begin{equation*}
\sum_{i=0}^k\alpha_i+\delta_{ks}\epsilon\omega\equiv\sum_{i=0}^{k-1}\alpha_i+\beta_k+[\alpha_0,Q_k]+\df Q_k \ \ \ \ \text{mod $W_{k+1}$}. 
\end{equation*}
Thus 
\begin{equation*}
\alpha_k+\delta_{ks}\epsilon\omega\equiv\beta_k+[\alpha_0,Q_k]+\df Q_k \ \ \ \ \text{mod $W_{k+1}$}. 
\end{equation*}
If $k=s$, we have  
\begin{equation*}
\omega=\epsilon^{-1}\left(\beta_s-\alpha_s\right)+\df\left(\epsilon^{-1}Q_s\right)+\left[\alpha_0,\epsilon^{-1}Q_s\right]. 
\end{equation*}
If $\nabla$ denotes the covariant derivative defined from $\mf{s}\stackrel{\ad}{\curvearrowright}V$, then by $[\mf{n},V]=0$ we have 
\begin{align*}
\nabla\left(\epsilon^{-1}Q_s\right)&=\df\left(\epsilon^{-1}Q_s\right)+\left[\omega_0,\epsilon^{-1}Q_s\right]\\
&=\df\left(\epsilon^{-1}Q_s\right)+\left[\alpha_0,\epsilon^{-1}Q_s\right]. 
\end{align*}
Therefore, $\omega$ is cohomologous to $\epsilon^{-1}\left(\beta_s-\alpha_s\right)$ which is leafwise constant since so are $\omega_0$ and $\Psi_*\omega_0$. 

If $k<s$, then 
\begin{equation*}
\alpha_k\equiv\beta_k+[\alpha_0,Q_k]+\df Q_k \ \ \ \ \text{mod $W_{k+1}$}. 
\end{equation*}
Let $\mf{s}\stackrel{\pi_k}{\curvearrowright}V_k$ denote the representation obtained from $\mf{s}\stackrel{\ad}{\curvearrowright}W_k/W_{k+1}$ by the identification $W_k/W_{k+1}\simeq V_k$, and let $\nabla$ be the leafwise connection defined by $\pi_k$. Recall that $\nabla Q_k=\df Q_k+\pi_k\omega_0Q_k$. Since 
\begin{align*}
\pi_k\omega_0Q_k&=\pi_k\left(\sum_{i=0}^s\alpha_i\right)Q_k \\
&\equiv[\alpha_0,Q_k] \ \ \ \ \text{mod $W_{k+1}$}, 
\end{align*}
we have 
\begin{equation*}
\alpha_k\equiv\beta_k+\df Q_k+\pi_k\omega_0Q_k \ \ \ \ \text{mod $W_{k+1}$}, 
\end{equation*}
which implies 
\begin{align*}
\alpha_k&=\beta_k+\df Q_k+\pi_k\omega_0Q_k \\
&=\beta_k+\nabla Q_k. 
\end{align*}
By the same argument starting from Equation \eqref{same} in the proof of vanishing of $H^0$, using the assumption on the eigenvalues of $\ad X$, we can conclude that $Q_k$ is leafwise constant. Define $Q^\prime\colon M\to N$ by $Q=e^{Q_k}Q^\prime$. Then Equation \eqref{apple} becomes 
\begin{align*}
\omega_0+\epsilon\omega&=\Ad\left(\left(Q^\prime\right)^{-1}e^{-Q_k}\right)\Psi_*\omega_0+\left(e^{Q_k}Q^\prime\right)^*\Theta \\ 
&=\Ad\left(\left(Q^\prime\right)^{-1}\right)\Psi^\prime_*\omega_0+\left(Q^\prime\right)^*\Theta, 
\end{align*}
where $\Psi^\prime_*=\Ad\left(e^{-Q_k}\right)\Psi_*$. Now we have $\log Q^\prime\in W_{k+1}$ and 
\begin{align*}
\Psi^\prime_*\omega_0&=e^{-\ad Q_k}\Psi_*\omega_0 \\
&=e^{-\ad Q_k}\left(\sum_{i=0}^{k-1}\alpha_i+\beta_k+\text{an element of $W_{k+1}$}\right) \\
&\equiv\sum_{i=0}^{k-1}\alpha_i+\beta_k+[\alpha_0,Q_k] \ \ \ \ \text{mod $W_{k+1}$} \\
&\equiv\sum_{i=0}^k\alpha_i \ \ \ \ \text{mod $W_{k+1}$} \\
&=\omega_0 \ \ \ \ \text{mod $W_{k+1}$}
\end{align*}
since $\df Q_k=0$. 
\end{proof}

Applying Lemma \ref{pp} repeatedly, we see that $\omega$ is cohomologous to a leafwise constant cocycle. Note that we have used the assumption on the eigenvalues only on $V_1,\ldots,V_{s-1}$, but not on $V_s=V$. 
\end{proof}

\section*{Acknowledgements}
This paper was written during the stays at Institut des Hautes \'{E}tudes Scientifiques in Bures-sur-Yvette and Max-Planck-Institut f\"{u}r Mathematik in Bonn. The stays were supported by Researcher Exchange Program between Japan Society for the Promotion of Science and those institutes.

\bibliography{maruhashi}
\end{document}